\documentclass[a4paper,11pt,reqno]{amsart}
\usepackage{amsfonts,amssymb,amscd,amsmath,latexsym,amsbsy,enumerate,a4wide,
tikz,  ifthen, mathrsfs, mathtools}
\usepackage{appendix}

\usepackage[all]{xy}
\usetikzlibrary{calc}
\usepackage{tikz-cd}
\usepackage{mathrsfs}
\usepackage{subfig}
\usepackage{tikz}


\newtheorem{thm}{Theorem}[section]
\newtheorem{defn}[thm]{Definition}
\newtheorem{lem}[thm]{Lemma}

\newtheorem{prop}[thm]{Proposition}
\theoremstyle{definition}
\newtheorem{example}[thm]{Example}
\newtheorem{rmk}[thm]{Remark}

\numberwithin{equation}{section}

\def\al{\alpha}
\def\be{\beta}

\def\de{\delta}
\def\la{\lambda}
\def\si{\sigma}

\def\vp{\varphi}

\def\La{\Lambda}
\def\Up{\Upsilon}
\def\Ga{\Gamma}

\def\R{\mathbb{R}}
\def\C{\mathbb{C}}
\def\N{\mathbb{N}}
\def\Id{\mathbf{1}}

\def\Pol{\mathrm{Pol}}
\def\Ker{\mathrm{Ker}}

\def\adj{\mathrm{adj}}

\newcommand{\rFs}[5]{\,_{#1}F_{#2} \left( \genfrac{.}{.}{0pt}{}{#3}{#4};#5 \right)}

\newcommand{\D}{\mathcal{D}}

\newcommand{\Mh}{\widehat M}
\newcommand{\Dh}{\widehat D}
\newcommand{\Ph}{\widehat P}


\title{Matrix exceptional Laguerre polynomials}
\author{E. Koelink}
\address{IMAPP, Radboud Universiteit, Nijmegen, The Netherlands}
\email{e.koelink@math.ru.nl}
\author{L. Morey}
\address{FaMAF-CIEM, Universidad Nacional de C\'ordoba, Argentina}
\email{lmorey@unc.edu.ar}
\author{P. Rom\'an}
\address{FaMAF-CIEM, Universidad Nacional de C\'ordoba, Argentina}
\email{pablo.roman@unc.edu.ar}

\begin{document}

\begin{abstract}
We give an analog of exceptional polynomials in the matrix valued setting by considering suitable factorizations of a given second order differential operator and performing Darboux transformations. Orthogonality and density of the exceptional sequence is discussed in detail. We give an example of matrix valued exceptional Laguerre polynomials of arbitrary size.
\end{abstract}

\maketitle


\section{Introduction}
The classical families of orthogonal polynomials, namely Hermite, Laguerre, and Jacobi, have been extensively studied and have found numerous applications in mathematics and physics due to their special property of being eigenfunctions of a second order differential operator. These three families are indeed the unique sequences $(p_n)_n$ of polynomials such that for each $n\in \N_0 = \N\cup\{0\}$, $\deg p_n = n$, $p_n$ is orthogonal with respect to a positive measure and $p_n$ is an eigenfunction of a second order differential operator. 

In the past years, the classical families have been extended in different directions. On one hand, an important extension are the matrix valued orthogonal polynomials (MVOPs). This theory started with the work of Krein in the 1940s \cite{Krein1} and has numerous applications in many branches of mathematics and physics. The study of MVOPs has attracted much attention recently and many of the properties of the scalar orthogonal polynomials have been lifted to the matrix valued setting, see \cite{CanteroMV2005}, \cite{CanteroMV2007}, \cite{AAGMM}, \cite{Duran}, \cite{DuranG1}, \cite{GPT}, \cite{GT}, see also the list of references in \cite{DamanikPS}. In particular, the families of MVOPs which are eigenfunctions of a second order differential operator with matrix valued coefficients have been basically classified in a recent paper by Casper and Yakimov \cite{CaspY}. The classification relies on the study of a pair of isomorphic algebras of differential and difference operators, called the Fourier algebras. However, the construction of explicit examples of MVOPs of arbitrary size with the property of being eigenfunctions of a second order differential operator is not an easy task. A useful tool to build new families from the known ones are the shift operators. This technique has been used successfully for Hermite, Laguerre and Gegenbauer type cases in \cite{IKR2}, \cite{KoelR} and \cite{KdlRR}, see also \cite{EMR} for a similar construction for discrete orthogonal polynomials. 

On the other hand, another recent extension is obtained by relaxing the degree condition of the classical families. These are the so-called exceptional orthogonal polynomials. In analogy with the classical set up, these polynomials form an orthogonal set, are eigenfunctions of a second order differential operator with rational coefficients, but some degrees are missing, i.e., there are a finite number of degrees for which no polynomial eigenfunction exists. The study of these polynomials started in \cite{UKM1} and \cite{UKM2} and continued in, for instance, \cite{UllaM}, \cite{TwoStep}, \cite{ArnoN}, \cite{Liaw}, \cite{Sasa1}, \cite {Sasa2}, \cite{Sasa3}, \cite{Sasa4}, \cite{Duran1}, \cite{Duran2}, \cite{Grandati1}, \cite{HeckmanP}. An important tool that has been used for the study of exceptional polynomials are the Darboux transformations. A classification of exceptional polynomials is given in \cite{X-Bochner} where the authors prove that every exceptional family is obtained by applying a finite sequence of Darboux transformations to the classical ones.

The aim of this work is to link these two extensions of classical orthogonal polynomials. Stated differently, we give an analog of exceptional polynomials in the matrix valued setting by considering suitable factorizations of a given second order matrix differential operator and performing Darboux transformations. The paper is organized in two parts. In part I, we discuss a general construction of matrix valued exceptional polynomials. In part II, applying the general theory we give an example of matrix valued exceptional Laguerre polynomials. 

In Section \ref{sec:2ndoperatorsfacDarbux} we consider a matrix valued second order differential operator $T_0$ acting on functions from the right and an eigenfunction $\phi$ of $T_0$, which is called the seed function. Following \cite{EtinGR} and \cite{GoncV} we get a second order differential operator $T_1$ and a first order  operator $A$ having the intertwining property $T_0 A= A T_1$. In addition, the intertwining operator $A$ is built in such a way that it preserves polynomials and regularity of leading coefficients of polynomials. We show that if the seed function has a scalar eigenvalue $\lambda$, then the operator $T_1$ can be obtained from $T_0$ by performing a Darboux transformation, i.e, the operators decompose as $T_0 = AB + \lambda$ and $T_1 = BA + \lambda$ for some first order operator $B$.

In Section \ref{sec:symmdiffopexceptweights} we consider the case where the operator $T_0$ is symmetric with respect to the matrix valued inner product associated to a matrix weight $W$. We introduce the exceptional matrix weight $\widehat W$ and show that the operator $T_1$ is symmetric with respect to the inner product induced by $\widehat W$. We also discuss self-adjoint extensions of the differential operators.

In Section \ref{sec:matriXOL} we prove the main theorem of the paper. We assume that there is a sequence of matrix orthogonal polynomials $(P_n)_{n\in\N_0}$ in $L^2(W)$ being an eigenfunction of $T_0$ for each $n\in\N_0$ i.e. $P_n\cdot T_0 = \Ga_n \cdot P_n.$ We introduce the sequence of matrix valued exceptional polynomials $(\widehat{P}_n)_n$, and we prove that they are eigenfunctions of $T_1$. Moreover we show that the polynomials $(\widehat{P}_n)_n$ are orthogonal with respect to $\widehat{W}$ and, if the eigenvalue $\lambda\not\in \si(\Ga_n)$ for all $n\in \N_0$, then the squared norm of $\widehat{P}_n$ is an invertible positive definite matrix. In contrast to the scalar case, matrix valued orthogonal polynomials may be eigenfunctions of different second order differential operators. Hence the exceptional polynomials $\widehat{P}_n$ depend on the choice of the second order operator $T_0$ as well as on the seed function $\phi$. Under some assumptions, we give an equivalent condition for the density of the exceptional polynomials. 

In Section \ref{sec:T0-diagonalizable} we study families of matrix valued exceptional polynomials obtained from a second order differential operator which is diagonalizable via a nonconstant matrix. 
We show that these polynomials can be described in terms of the scalar exceptional ones. Finally, in Section \ref{sec:fourier} we extend some of the properties of the Fourier algebras given in \cite{CaspY} for the exceptional weight $\widehat{W}$. 

Next we consider an explicit example of matrix valued exceptional Laguerre polynomials for arbitrary size. In Section \ref{sec:MLaguerreweightT0} we recall the weight and one of the differential operators for the matrix Laguerre polynomials to which we apply the general machinery.
In Section \ref{sec:seedmatrixeigenfunctionsforT0} we determine a suitable seed function for the second order matrix differential operator motivated by the theory of regular singularities in this setting, which we recall briefly in Appendix \ref{sec:appA:MDEsingularities} following \cite{CoddL}. In Sections \ref{sec:intertwingLaguerre} and \ref{sec:MatrixXOLaguerre} we introduce an intertwinning operator $A$ for the Laguerre example preserving polynomials and regularity of leading coefficients of polynomials, the weight for which the matrix exceptional Laguerre polynomials are orthogonal and the sequence of the matrix exceptional Laguerre polynomials. Finally, in Section \ref{sec:numerics} we present some numerical information on the zeros of the matrix exceptional Laguerre polynomials discussed in Part \ref{part:matriXOL}.

\noindent
\textbf{Acknowledgements.} The support of Erasmus+ travel grant is grate\-fully acknowledged. The work of Luc\'ia Morey and Pablo Rom\'an was supported by SeCyTUNC.

\section{Preliminaries: Differential and difference operators}
\label{sec:preliminaries}
We consider a sequence $(Q_n(x))_{n\in \mathbb{N}_0}$, such that for each $n\in \mathbb{N}_0$, $Q_n(x)$ is a matrix valued
polynomial in $x$, not necessarily of degree $n$. Let $\mathcal{M}_N$ be the set of all differential operators of the form 
\begin{equation}
\label{eq:DifferentialOperator}
D=\sum_{j=0}^n \partial_x^j F_j(x), \qquad \partial_x^j := \frac{d^j}{dx^j},
\end{equation}
where $F_j:\mathbb{C}\to M_N(\mathbb{C})$ are rational functions of $x$, i.e. each entry is a rational function, with an action from the right on the the sequence $(Q_n(x))_{n\in \mathbb{N}_0}$ given by
$$Q_n(x) \cdot D  = \sum_{j=0}^n \left(\partial_x^j Q_n(x)\right)\,  F_j(x).
$$
The set $\mathcal{M}_N$ is an algebra with the product $D_2\circ D_1$ acting from the right. Since the action of elements in $\mathcal{M}_N$ is from the right, the action of the product $D_2\circ D_1$ on the polynomials $Q_n$ is given by applying first $D_2$ and then $D_1$. 

Now we consider a left action on the sequence $(Q_n(x))_{n\in \mathbb{N}_0}$ by discrete operators. For $j\in\mathbb{Z}$, let $\delta^{j}$ be the discrete operator which acts on a sequence $A:\mathbb{N}_0 \to M_N(\mathbb{C})$ by
$$(\delta^j \cdot A)(n)=A(n+j),$$
where we take the value of a sequence at a negative integer to be equal to the zero matrix. A discrete operator
\begin{equation}
 \label{eq:DifferenceOperator}
M=\sum_{j=-\ell}^k A_j(n) \delta^j,
 \end{equation}
where $A_{-\ell},\ldots,A_k$ are sequences, acts by
\begin{align*}
M \cdot Q_n(x) &= \sum_{j=-\ell}^k A_j(n) \, \delta^j\cdot Q_n(x) = \sum_{j=-\ell}^k A_j(n) \, Q_{n+j}(x).
\end{align*}
In this formula $Q_j(x)$ for negative values of $j$ has to be interpreted as the zero matrix.
The product of two discrete operators is defined by $ M_1\circ M_2$ acting from the left. In this way, the action of a product $M_1\circ M_2$ on the polynomial $Q_n$ is given by applying $M_2$ first and then applying $M_1$.  The set of all discrete operators of the form \eqref{eq:DifferenceOperator} with this product becomes an algebra that we denote by $\mathcal{N}_N$.

In the rest of the paper we will use the notation $D_2\circ D_1 = D_2D_1$ and $M_1\circ M_2 = M_1M_2$. We observe that 
$(M\cdot Q_n)\cdot D= M\cdot (Q_n\cdot D)$ and we denote this two-side action by $M\cdot Q_n\cdot D$.


\part{General construction of matrix valued exceptional polynomials}\label{part:general}

In the first part of the paper we discuss a general construction of matrix valued exceptional polynomials. The construction is inspired by works on the scalar case, e.g \cite{UllaM}, \cite{TwoStep}, \cite{X-Bochner}. 


\section{Second order operators, factorizations and Darboux transforms}
\label{sec:2ndoperatorsfacDarbux}
In this section we discuss a factorization of a matrix valued second order operator $T_0$ as the 
composition of two first order differential operators. We discuss the 
corresponding Darboux factorization. In this section we work with the differential operators in an algebraic way, and we assume that the functions are sufficiently differentiable. 

\subsection{Seed function and intertwining relations}\label{ssec:seedintertwining}
Let us consider a second order differential operator
\begin{equation}\label{eq:definition_T0}
T_0= \frac{d^2}{dx^2}F_2+\frac{d}{dx}F_1+F_0,
\end{equation}
where $F_1$, $F_2$, and $F_3$ are $N\times N$-matrix valued functions which we view as multiplication operator from the right. We assume that $F_1$, $F_2$, and $F_3$ are rational functions, i.e. each entry is a rational function, this means $T_0\in\mathcal{M}_N$. Observe that $T_0$ acts on row vector valued functions from the right or on matrix valued functions from the right viewed as the action on $N$ row valued functions. Let $\phi$ be a matrix eigenfunction of $T_0$ of matrix eigenvalue 
$\La$, i.e. 
\begin{equation}\label{eq:seedfunctiongeneral}
\phi\cdot T_0=\La \cdot \phi.
\end{equation}
\begin{rmk}
\label{rmk:Mphi}
We observe that given an eigenfunction $\phi$ of the operator $T_0$ and a constant matrix $M$ which commutes with $\La$, the function $M\phi$ is also an eigenfunction of $T_0$, for
$$\phi\cdot T_0=\La \cdot \phi \quad \Rightarrow \quad M\phi\cdot T_0= (M\La \cdot \phi) = \La \cdot (M\phi).$$
\end{rmk}

We assume that $\phi$ is a classical solution on a possibly infinite interval $(a,b)$. This interval is related to the $L^2$-space in Section \ref{sec:symmdiffopexceptweights}. 
In case $\La$ is a diagonal matrix this means that each row vector of $\phi$ is an eigenvector. We assume that $\phi(x)$ is invertible as a $N\times N$-matrix, except 
at possibly a finite set of points $x\in\C$. 
The matrix function $\phi$ is the seed function.
By \cite[Thm. 1.1]{EtinGR}, \cite[Thm. 1]{GoncV}
there exist differential operators $\widehat{A}$, $\widehat{T}_1$ such that the interwining relation
\begin{equation}\label{eq:int-relation}
\widehat{A} \widehat{T}_1= T_0 \widehat{A}
\end{equation}
holds as matrix differential operators acting from the right. 
Moreover, the operators $\widehat{A}$, $\widehat{T}_1$ are given by 
\begin{equation}\label{eq:intertwinerAT1}
\widehat{A}=\frac{d}{dx}-\phi^{-1}\phi', \qquad \widehat{T}_1= \frac{d^2}{dx^2}G_2+\frac{d}{dx}G_1+G_0,
\end{equation}
where the coefficient functions $G_2$, $G_1$ and $G_0$ in  $\widehat{T}_1$ are defined by
\begin{gather*}
G_2(x)=F_2(x), \qquad G_1(x)=F_2'(x)+F_1(x)+[\phi(x)^{-1}\phi'(x),F_2(x)], \\
G_0(x)=F_1'(x)-F_1(x)\phi(x)^{-1}\phi'(x)+2(\phi(x)^{-1}\phi'(x))'F_2(x)+\phi(x)^{-1}\phi'(x)G_1(x)+F_0(x),
\end{gather*}
and the commutator is the standard commutator $[\mathcal{A}(x),\mathcal{B}(x)]=\mathcal{A}(x)\mathcal{B}(x)-\mathcal{B}(x)\mathcal{A}(x)$. 
In order to have $\widehat{T}_1$ in the same class, we assume that $\phi^{-1}\phi'$ is a rational matrix function. 

\begin{rmk}
In \cite[Thm. 1.1]{EtinGR} the intertwining operator $\widehat{A}$ is given as quasideterminant of a Wronski matrix. In our setting, since $\widehat{A}$ acts from the right, the expression of the Wronski matrix is the transpose of that in \cite[Thm. 1.1]{EtinGR}. We get the expression
\begin{equation}
\widehat{A}=\begin{vmatrix}
\phi & \phi'\\
1 & \frac{d}{dx}
\end{vmatrix}_{2,2}.
\end{equation}
\end{rmk}

\begin{rmk}
\label{rmk:eigenfunctions-of-T1}
From the intertwining relation \eqref{eq:int-relation}, we get that if $P$ is a  matrix eigenfunction of $T_0$ with eigenvalue matrix $\Ga$, then $P\widehat{A}=P\cdot \widehat{A}$ is a matrix eigenfunction of $\widehat{T}_1$ for the same matrix eigenvalue: 
\begin{equation*}
P\widehat{A}\cdot\widehat{T}_1=P \cdot T_0\widehat{A}=\Gamma \cdot P\widehat{A}.
\end{equation*}
Similarly, row eigenfunctions of $T_0$ are mapped to row eigenfunctions of 
$\widehat{T}_1$ by $\widehat{A}$ for the same eigenvalue. 
\end{rmk}

\subsection{Seed function with scalar eigenvalue}\label{ssec:scalar-eigenvalue}
We add the extra condition that the 
eigenvalue matrix $\La$ for the seed function in \eqref{eq:seedfunctiongeneral} is a scalar $\la\in\mathbb{R}$ times the identity matrix.
We impose this restriction, since the operators act from the right and the eigenvalues act from the left whereas in the Darboux factorization we incorporate the eigenvalue into the differential operator. So the role of the eigenvalue switches, and so we need to 
have an eigenvalue that commutes with the operator. 
Let us consider the following first order matrix differential operator
\begin{equation}\label{eq:defBhat}
\widehat{B}=\frac{d}{dx}B_1+B_0,
\end{equation}
where
$B_1(x)=F_2(x)$, $B_0(x)=F_1(x)+\phi(x)^{-1}\phi'(x)F_2(x)$ are 
rational matrix functions.

\begin{lem}
\label{lem:TO=AB+la}
Let $\widehat{A}$ be as in \eqref{eq:intertwinerAT1}, and $\widehat{B}$ as in \eqref{eq:defBhat}. Then as differential operators acting from the right we have $T_0=\widehat{A}\widehat{B}+\lambda$ and $\widehat{T}_1=\widehat{B}\widehat{A}+\lambda.$
\end{lem}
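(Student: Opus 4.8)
The plan is to verify both factorization identities by a direct computation of the composition of first order operators, using the explicit formulas for the coefficient functions that have already been recorded. Since everything acts from the right, I first need to be careful about the composition rule: for $\widehat A = \frac{d}{dx} - \phi^{-1}\phi'$ and $\widehat B = \frac{d}{dx}B_1 + B_0$, the product $\widehat A\widehat B$ means "apply $\widehat A$, then $\widehat B$", i.e. $f\cdot(\widehat A\widehat B) = (f\cdot\widehat A)\cdot\widehat B$. So I would start from a generic (sufficiently smooth) row function $f$, compute $f\cdot\widehat A = f' - f\phi^{-1}\phi'$, and then apply $\widehat B$ to this, using $(g)\cdot\widehat B = g'B_1 + gB_0$. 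Collecting terms by order of derivative of $f$ gives the coefficients of $\widehat A\widehat B$ as a second order operator in $\mathcal M_N$.

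Concretely, for $\widehat A\widehat B + \lambda = T_0$ I would expand $f\cdot(\widehat A\widehat B)$. The $f''$-term picks up $B_1 = F_2$, matching $F_2$ in $T_0$. The $f'$-term collects $B_1' + B_0 - \phi^{-1}\phi'B_1 = F_2' + F_1 + (\phi^{-1}\phi')F_2 - (\phi^{-1}\phi')F_2 = F_2' + F_1$; but wait — here I must re-examine, since in $T_0$ the first order coefficient is $F_1$, not $F_2' + F_1$. This discrepancy is exactly the place the argument is delicate: the operator $T_0$ as written in \eqref{eq:definition_T0} has its coefficient functions on the right of the derivatives, and when one composes $\frac{d^2}{dx^2}F_2$ there is no extra $F_2'$ term because the $F_2$ sits to the right of both derivatives. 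So I should instead directly compute $f\cdot T_0 = f''F_2 + f'F_1 + fF_0$ and match it term-by-term against $f\cdot(\widehat A\widehat B) + \lambda f$. The $f'$-coefficient then forces a relation that must be checked, and the $f$-coefficient (the zeroth order part) is where the seed equation $\phi\cdot T_0 = \lambda\phi$ enters: the constant term of $\widehat A\widehat B$ will involve $(\phi^{-1}\phi')'$ and $(\phi^{-1}\phi')B_0$, and one uses $\phi\cdot T_0 = \lambda\phi$, i.e. $\phi''F_2 + \phi'F_1 + \phi F_0 = \lambda\phi$, rewritten as an identity for $\phi^{-1}\phi''$, to see that this constant term equals $F_0 - \lambda$.

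For the second identity $\widehat T_1 = \widehat B\widehat A + \lambda$ the computation is analogous but now with the opposite order of composition: $f\cdot(\widehat B\widehat A) = (f\cdot\widehat B)\cdot\widehat A = (f'B_1 + fB_0)\cdot\widehat A = (f'B_1 + fB_0)' - (f'B_1 + fB_0)\phi^{-1}\phi'$. Expanding and collecting gives a second order operator whose coefficients I compare with $G_2, G_1, G_0$ from \eqref{eq:intertwinerAT1}. The $f''$ and $f'$ coefficients should match $G_2 = F_2$ and $G_1 = F_2' + F_1 + [\phi^{-1}\phi', F_2]$ after substituting $B_1 = F_2$, $B_0 = F_1 + (\phi^{-1}\phi')F_2$ and simplifying the commutator; the zeroth order coefficient match with $G_0$ again requires invoking the seed equation for $\phi$.

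The main obstacle is purely bookkeeping: keeping track of the noncommutativity (so that, e.g., $(\phi^{-1}\phi')F_2 \ne F_2(\phi^{-1}\phi')$ and the commutator terms in $G_1$ and $G_0$ appear correctly) and correctly differentiating products of matrix functions, together with the single genuinely substantive input — eliminating $\phi^{-1}\phi''$ via $\phi\cdot T_0 = \lambda\phi$ to close the zeroth order identities. Once the coefficient functions are matched the lemma follows, since two operators in $\mathcal M_N$ with the same coefficient functions are equal.
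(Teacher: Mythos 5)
Your proposal takes essentially the same route as the paper: expand $\widehat A\widehat B$ and $\widehat B\widehat A$ directly (respecting the right-action composition order), match coefficients against $T_0$ and $\widehat T_1$, and use the seed equation $\phi\cdot T_0=\lambda\cdot\phi$ to identify the zeroth-order terms; this is correct. The only wobble is your spurious $B_1'$ in the first-order coefficient of $\widehat A\widehat B$: since $(f\cdot\widehat A)\cdot\widehat B=g'B_1+gB_0$ with $g=f'-f\phi^{-1}\phi'$, the coefficient $B_1$ is never differentiated, so that coefficient is exactly $-\phi^{-1}\phi'F_2+B_0=F_1$ with no correction needed (the $B_1'$ term legitimately appears only in the opposite composition $\widehat B\widehat A$, where it produces the $F_2'$ and commutator terms of $G_1$).
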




\begin{proof}
The proof follows by applying the definition of $\widehat{A}$ and $\widehat{B}$ as
in \eqref{eq:intertwinerAT1}, \eqref{eq:defBhat}. By direct calculation we get 
\begin{multline*}
 \widehat{A}\widehat{B}+\la=\frac{d^2}{dx^2}F_2(x)+\frac{d}{dx}F_1(x)-\phi(x)^{-1}\phi''(x)F_2(x)-\phi(x)^{-1}\phi'(x)F_1(x)+\la =T_0 
\end{multline*}
where in the first equality we use $(\phi(x)^{-1})'=-\phi(x)^{-1}\phi'(x)\phi(x)^{-1}$,  and in the second one that $\phi$ is eigenfunction of $T_0$ of eigenvalue $\la$. In a similar way we get
\begin{equation*}
\widehat{B}\widehat{A}+\la=\frac{d^2}{dx^2}G_2(x)+\frac{d}{dx}G_1(x)+B_0'(x)-B_0(x)\phi(x)^{-1}\phi'(x)+\la
\end{equation*}
using \eqref{eq:intertwinerAT1}, \eqref{eq:defBhat}. Using
again $\phi\cdot T_0=\la\cdot \phi$ we see by a straightforward calculation that 
$B_0'(x)-B_0(x)\phi(x)^{-1}\phi'(x)+\la=G_0(x)$. This 
shows $\widehat{B}\widehat{A}+\la
=\widehat{T}_1$. 
\end{proof}

Let $\Pol$ be the vector space of row polynomials. 
We assume that there exists a matrix polynomial $\Up(x)$  such that the 
operator
\begin{equation}\label{eq:defA}
A=\widehat{A}\Up =\frac{d}{dx} \Up - \phi^{-1}\phi'\Up  
\end{equation}
preserves $\Pol$ and the regularity of the leading coefficients. In particular, we assume that $\phi^{-1}\phi'\Up$ is a 
polynomial. By taking any invertible matrix $M$ in Remark \ref{rmk:Mphi}, we obtain a new seed function $M\phi$ with
eigenvalue $\lambda$. However, it follows directly from the definition \eqref{eq:defA} that the operators associated to
$M\phi$ and $\phi$ are equal. As before, $\Up$ is the operator acting from the right 
as multiplication operator. We assume that the matrix polynomial $\Up$ is invertible except at a finite
set. In Part \ref{part:matriXOL}, $\Up$ is a polynomial times the identity, and 
$\Up^{-1}$ is a rational function times the identity. 
We can rewrite the intertwining relation \eqref{eq:int-relation} in terms of $A$ as 
\begin{equation}\label{eq:v2-int-relation}
AT_1=T_0 A, \qquad T_1=\Up^{-1}\widehat{T}_1\Up
\end{equation}
and consequently $T_1=\Up^{-1}\widehat{B}\widehat{A}\Up+\la$. We denote 
\begin{equation}\label{eq:Bhat}
B=\Up^{-1}\widehat{B}=\frac{d}{dx}\Up^{-1}F_2-\Up^{-1}\Up'\Up^{-1}F_2+\Up^{-1}F_1+\Up^{-1}\phi^{-1}\phi'F_2,
\end{equation}
using the explicit expression for $B_0$ and $B_1$ as in \eqref{eq:defBhat}.
Lemma \ref{lem:TO=AB+la} shows $T_0=AB+\la$ and $T_1=BA+\la$ as matrix differential operators acting from the right.


\section{Symmetric differential operators and exceptional weights}\label{sec:symmdiffopexceptweights}

Recall that a matrix measure $\mu$ taking values in the $N\times N$-positive definite matrices can be written as a matrix function $V$, taking values in 
the positive definite matrices $\tau_\mu$-a.e. times the trace measure $\tau_\mu$, which is a Borel measure on $\R$, see Berg \cite{Berg} for more information. Then the square integrable functions are row vector valued measurable functions $f\colon \R \to \C^N$ satisfying 
$\int_\R f(x) V(x) (f(x))^\ast\, d\tau_\mu(x) <\infty$. After identifying functions 
$\tau_\mu$-a.e. we get the Hilbert space $L^2(\mu)$ of square integrable 
row vector functions. In particular, we assume that 
$\int_\R f(x) V(x) (f(x))^\ast\, d\tau_\mu(x)=0$ implies $f=0$ $\tau_\mu$-a.e. meaning that the kernel of $V$ is trivial $\tau_\mu$-a.e.
We assume that $\tau_\mu$ is absolutely continuous 
with respect to the Lebesgue measure $dx$ on some, possibly infinite, interval $(a,b)$.
Then we put $W= V\frac{d\tau_\mu}{dx}$, and 
this is a positive definite matrix function supported on the interval $(a,b)$.
We assume that $W$ has finite moments of all orders, so that $\Pol \subset L^2(W)$, where we use $L^2(W)=L^2(\mu)$ to emphasise the weight function $W$. 
From now on we assume that matrix entries of such a weight $W$ are $C^2$-functions on 
$(a,b)$. 
Let $\langle \cdot, \cdot \rangle_W$ be the 
corresponding inner product. 
We assume that $T_0=\frac{d^2}{dx^2}F_2+\frac{d}{dx}F_1+F_0$ is a second order differential operator acting from the right 
which is symmetric with respect to $W$ on the polynomials. This means that we assume that $T_0$ is well-defined on $\Pol$ and $p\cdot T_0\in L^2(W)$ for all $p\in \Pol$ and 
\begin{equation}\label{eq:symmetryT0}
\langle p\cdot T_0, q \rangle_W = \langle p, q\cdot T_0 \rangle_W,
\qquad \forall\, p,q\in \Pol.
\end{equation} 
We assume that $\Pol$ is dense in $L^2(W)$, which in the classical $N=1$ case means that we deal with a determinate moment problem \cite{Akhi} and the matrix moment problem is discussed by Berg \cite{Berg}.

\begin{rmk}\label{rmk:symmetryeq}
We assume not only the entries of $W$ being $C^2$ on $(a,b)$, but also the matrix entries of $F_2$ and 
the ones of $F_1$ being $C^2$ on $(a,b)$. Moreover, we assume that 
all terms in \eqref{eq:symmetry1}, \eqref{eq:symmetry2}, and \eqref{eq:symmetry3} are separately integrable on $(a,b)$. 
Then by integration by parts, see \cite[Thm.~3.1]{DuraG}, the operator $T_0$ is symmetric with respect to $W$ if and only if 
$F_2(x)W(x)$ and $(F_2(x)W(x))' - F_1(x)W(x)$ vanish at the endpoints $a$ and $b$ and  the symmetry equations 
\begin{align}
    \label{eq:symmetry1}
    F_2W&=WF_2^\ast,\\
    \label{eq:symmetry2}
    (F_2W)'&= \frac12WF_1^\ast+\frac12F_1W,\\
    \label{eq:symmetry3}
    (F_2W)''&=(F_1W)'-F_0W+WF_0^\ast
\end{align}
hold. If we take the derivative of \eqref{eq:symmetry2}  and subtract \eqref{eq:symmetry3} multiplied by two, we can eliminate the coefficient $F_2$. We obtain
$(WF_1^\ast - F_1W)' = 2(WF_0^\ast-F_0W)$.
\end{rmk}

\begin{defn}\label{defn:exceptional-weight}
The matrix exceptional weight is defined by 
\begin{equation*}
\widehat{W}(x)=\Up(x)^{-1}W(x)F_2(x)^\ast(\Up(x)^{-1})^\ast, \qquad x\in (a,b).
\end{equation*}
\end{defn}

Note that Definition \ref{defn:exceptional-weight} states $\widehat{W}$ is a weight. However, even though  \eqref{eq:symmetry1} 
implies that $\widehat{W}$ is self-adjoint, it is not in general positive definite nor is it clear that all its moments are finite. However, if 
$WF_2^\ast$ is a weight, so is $\widehat{W}$. This happens e.g. in case 
$F_2$ is a nonnegative function times the identity on $(a,b)$ as in Part \ref{part:matriXOL}. So we make the assumption that $\widehat{W}$ is positive definite on the interval $(a,b)$ and that it has finite moments of all order. This last condition also involves the matrix polynomial $\Up$. 

Assuming the additional condition that $\widehat{W}$ is a weight with finite moments, we   want to consider $A$ as an operator from $L^2(W)$ to $L^2(\widehat{W})$ 
and $B$ as an operator from  $L^2(\widehat{W})$ to $L^2(W)$, and we want to relate these
operators to each others adjoint operator, see 
Proposition \ref{prop:-BadjA}. For this we need the analogue of integration by parts for  these operators, cf. Remark \ref{rmk:symmetryeq}, which we prove now. 

\begin{lem}\label{lem:strange-relation}
Let $T_0$ be the operator \eqref{eq:definition_T0} and assume that 
$T_0$ is symmetric with respect to $W$ on $(a,b)$. Assume $\phi$ is a seed function for $T_0$ with scalar eigenvalue $\la\in \mathbb{R}$.
Assume moreover that  
$$
x\mapsto \phi(x)'F_2(x)W(x)\phi(x)^\ast, \qquad x\mapsto \phi(x)F_1(x)W(x)\phi(x)^\ast,
$$
both vanish at the same endpoint $a$ or $b$ for the weight $W$. Then 
\begin{align}
\label{eq:strange-condition-v1}
&(WF_2^\ast)'=WF_1^\ast+WF_2^\ast(\phi^{-1}\phi')^\ast-(\phi^{-1}\phi')WF_2^\ast, \\
\label{eq:strange-condition-v2}
&\quad \phi W(\phi'F_2+\frac{1}{2}\phi F_1)^\ast=(\phi'F_2+\frac{1}{2}\phi F_1)W\phi^\ast 
\end{align}
hold for $x\in (a,b)$. 
\end{lem}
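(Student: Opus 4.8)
The ingredients are the symmetry equations \eqref{eq:symmetry1}--\eqref{eq:symmetry3} together with the consequence $(WF_1^{\ast}-F_1W)'=2(WF_0^{\ast}-F_0W)$ recorded in Remark \ref{rmk:symmetryeq}, and the seed equation \eqref{eq:seedfunctiongeneral}, which for $\La=\la I$ reads $\phi''F_2+\phi'F_1+\phi F_0=\la\phi$. The plan is to prove \eqref{eq:strange-condition-v2} first, by showing that a suitable matrix function has vanishing derivative and vanishing boundary value, and then to deduce \eqref{eq:strange-condition-v1} from \eqref{eq:strange-condition-v2} by a purely algebraic manipulation.

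For \eqref{eq:strange-condition-v2} I would set
\[
S=\Bigl(\phi'F_2+\tfrac12\,\phi F_1\Bigr)W\phi^{\ast},
\]
so that \eqref{eq:strange-condition-v2} is exactly the assertion $S=S^{\ast}$. First I would compute $S'$ by differentiating each factor, then use \eqref{eq:symmetry2} to replace $(F_2W)'$ by $\tfrac12(WF_1^{\ast}+F_1W)$ and the seed equation to replace $\phi''F_2$ by $\la\phi-\phi'F_1-\phi F_0$. Taking the adjoint of the resulting expression — using $W^{\ast}=W$, $\la\in\R$, \eqref{eq:symmetry1} in the form $(F_2W)^{\ast}=F_2W$, and that $(\,\cdot\,)^{\ast}$ commutes with $d/dx$ — and subtracting, all terms should cancel in pairs except the single term
\[
\phi\Bigl((WF_0^{\ast}-F_0W)-\tfrac12\bigl((WF_1^{\ast})'-(F_1W)'\bigr)\Bigr)\phi^{\ast},
\]
which vanishes by the displayed consequence of the symmetry equations. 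Hence $(S-S^{\ast})'=0$ on $(a,b)$, so $S-S^{\ast}$ is a constant matrix. By hypothesis $\phi'F_2W\phi^{\ast}$ and $\phi F_1W\phi^{\ast}$ both tend to $0$ at one of the endpoints $a$ or $b$, so $S$, and therefore $S-S^{\ast}$, vanishes there; being constant, it is identically $0$, which is \eqref{eq:strange-condition-v2}.

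For \eqref{eq:strange-condition-v1}, note that \eqref{eq:symmetry1} and \eqref{eq:symmetry2} give $(WF_2^{\ast})'=(F_2W)'=\tfrac12(WF_1^{\ast}+F_1W)$, so \eqref{eq:strange-condition-v1} is equivalent to
\[
\tfrac12\bigl(F_1W-WF_1^{\ast}\bigr)=F_2W(\phi^{-1}\phi')^{\ast}-(\phi^{-1}\phi')F_2W .
\]
Multiplying this identity on the left by $\phi$ and on the right by $\phi^{\ast}$ — legitimate and reversible at the cofinite set of points of $(a,b)$ where $\phi$ is invertible — and using $\phi(\phi^{-1}\phi')=\phi'$, $(\phi^{-1}\phi')^{\ast}\phi^{\ast}=(\phi')^{\ast}$, together with \eqref{eq:symmetry1}, transforms it into precisely $S=S^{\ast}$, already proved. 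This yields \eqref{eq:strange-condition-v1} wherever $\phi$ is invertible, hence on $(a,b)$ in the sense of the lemma.

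I expect the only real work to be the bookkeeping in the computation of $S'-(S')^{\ast}$: one must be careful about the order of the matrix factors and insert each symmetry equation in the right place so that the large cancellation leaves exactly the single term killed by $(WF_1^{\ast}-F_1W)'=2(WF_0^{\ast}-F_0W)$; everything else is elementary.
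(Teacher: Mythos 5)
Your proposal is correct and follows essentially the same route as the paper: both reduce \eqref{eq:strange-condition-v2} to showing that the difference of its two sides has vanishing derivative (via the symmetry equations and the seed equation $\phi''F_2+\phi'F_1+\phi F_0=\la\phi$) and vanishes at one endpoint, and both obtain \eqref{eq:strange-condition-v1} from \eqref{eq:strange-condition-v2} through the algebraic equivalence supplied by \eqref{eq:symmetry1} and \eqref{eq:symmetry2}. The only cosmetic differences are the order in which the two identities are handled and your use of the derived relation $(WF_1^\ast-F_1W)'=2(WF_0^\ast-F_0W)$ in place of a direct appeal to \eqref{eq:symmetry3}; your residual term is indeed the one that survives the cancellation.
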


\begin{proof}
We first note that \eqref{eq:strange-condition-v1} and 
\eqref{eq:strange-condition-v2} are equivalent using \eqref{eq:symmetry1}
and \eqref{eq:symmetry2}.
So it suffices to prove \eqref{eq:strange-condition-v2}, which we expand as 
\begin{equation}\label{eq:strange-condition-v3}
\phi W F_2^\ast (\phi')^\ast +\frac{1}{2}\phi W  F_1^\ast \phi^\ast =\phi'F_2 W \phi^\ast +\frac{1}{2}\phi F_1 W \phi^\ast,
\end{equation} 
and by assumption both sides vanish at one of the endpoints $a$ or $b$. All terms in \eqref{eq:strange-condition-v3} are $C^1$ on $(a,b)$, since
we assume $\phi$ to be a classical solution on $(a,b)$.
So it  suffices to show that 
the derivatives on both sides are equal. 
Taking derivatives with respect to $x$ on both sides of \eqref{eq:strange-condition-v3}, 
we see that we need to show
\begin{multline*}
 \phi' W F_2^\ast (\phi')^\ast +
 \phi (W F_2^\ast)' (\phi')^\ast + 
 \phi W F_2^\ast (\phi'')^\ast + 
 \frac{1}{2}\phi' W  F_1^\ast \phi^\ast +
 \frac{1}{2}\phi (W F_1^\ast)' \phi^\ast +
 \frac{1}{2}\phi W  F_1^\ast (\phi')^\ast \\
 = \phi''F_2 W \phi^\ast +
 \phi' (F_2 W)' \phi^\ast + 
 \phi'F_2 W (\phi')^\ast +
\frac{1}{2}\phi' F_1 W \phi^\ast +
\frac{1}{2}\phi (F_1 W)' \phi^\ast +
\frac{1}{2}\phi F_1 W (\phi')^\ast 
\end{multline*}
Observe that the term $\phi' W F_2^\ast (\phi')^\ast$ on the left hand side is equal to $ \phi'F_2 W (\phi')^\ast$ on the right hand side by \eqref{eq:symmetry1}.
Next we use \eqref{eq:symmetry2} in the terms $\phi (W F_2^\ast)'(\phi')^\ast$
and $\phi' (F_2 W)' \phi^\ast$ and \eqref{eq:symmetry3}
in the terms $ \frac{1}{2}\phi (W F_1^\ast)' \phi^\ast$ and $\frac{1}{2}\phi (F_1 W)' \phi^\ast$, to see that we have to show that 
\begin{multline}
\label{eq:strange-proof2}
\phi W (\phi'' F_2+\phi'F_1+\phi F_0) ^\ast + \frac12 \phi F_1 W (\phi')^\ast 
+ \frac12 \phi' W F_1^\ast \phi^\ast + \frac12 \phi (WF_2^\ast)''\phi^\ast \\
= 
(\phi'' F_2+\phi'F_1+\phi F_0)W\phi^\ast + \frac12 \phi'WF_1^\ast \phi^\ast + \frac12 \phi (F_2W)'' \phi^\ast + \frac12 \phi F_1W(\phi')^\ast.
\end{multline}
Using \eqref{eq:symmetry1} and the fact $\phi$ is a seed function for $T_0$ with a real scalar eigenvalue $\la$, it follows that \eqref{eq:strange-proof2} is valid.
\end{proof}

Now we can relate $A$ and $B$. Recall that $A$ has been set up so that it preserves polynomials. By the assumptions of finite moments for the weights, we see that 
$A$ maps $\Pol$ into $L^2(\widehat{W})$, where as before we assume that $\widehat{W}$ is a weight function on $(a,b)$. We make the additional assumption that 
$\Pol$ is dense in $L^2(\widehat{W})$. 

\begin{prop}\label{prop:-BadjA}
Assume that $B$ maps $\Pol$ into $L^2(W)$ and that the boundary condition 
\begin{equation*}
(pWF_2^\ast(\Upsilon^{-1})^\ast q^\ast)\biggm|_a^b=0,\qquad \forall\, p,q\in \Pol
\end{equation*}
holds, then $\langle p\cdot A, q \rangle_{\widehat{W}}=-\langle p, q\cdot B \rangle_W$ 
holds for all $p,q\in \Pol$.
\end{prop}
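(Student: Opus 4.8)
The plan is to expand $\langle p\cdot A,q\rangle_{\widehat W}$ straight from the definitions, integrate by parts once, and then match the outcome term by term against $-\langle p,q\cdot B\rangle_W$; the only nontrivial input will be the identity \eqref{eq:strange-condition-v1} of Lemma~\ref{lem:strange-relation}.

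First I would substitute $p\cdot A=p'\Up-p\,\phi^{-1}\phi'\,\Up$ from \eqref{eq:defA} and $\widehat W=\Up^{-1}WF_2^\ast(\Up^{-1})^\ast$ from Definition~\ref{defn:exceptional-weight} into $\langle p\cdot A,q\rangle_{\widehat W}=\int_a^b (p\cdot A)\,\widehat W\,q^\ast\,dx$. The factor $\Up\,\Up^{-1}$ collapses, leaving $\int_a^b\bigl(p'-p\,\phi^{-1}\phi'\bigr)WF_2^\ast(\Up^{-1})^\ast q^\ast\,dx$; all terms here are finite because $p'\Up$ and $p\,\phi^{-1}\phi'\,\Up$ are row polynomials, hence in $L^2(\widehat W)$, while $q\in L^2(\widehat W)$. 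Integrating by parts in the term with $p'$ yields the boundary contribution $\bigl(pWF_2^\ast(\Up^{-1})^\ast q^\ast\bigr)\big|_a^b$, which vanishes by hypothesis, so
\[
\langle p\cdot A,q\rangle_{\widehat W}=-\int_a^b p\,\Bigl(WF_2^\ast(\Up^{-1})^\ast q^\ast\Bigr)'dx-\int_a^b p\,\phi^{-1}\phi'\,WF_2^\ast(\Up^{-1})^\ast q^\ast\,dx .
\]
Expanding the derivative by the product rule, using $\bigl((\Up^{-1})^\ast\bigr)'=-(\Up^{-1})^\ast(\Up')^\ast(\Up^{-1})^\ast$, splits this into three groups: a term with $(q')^\ast$, a term carrying the factor $(\Up')^\ast$, and a term with $(WF_2^\ast)'$ together with the leftover $p\,\phi^{-1}\phi'\,WF_2^\ast(\Up^{-1})^\ast q^\ast$.

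On the other side I would compute $-\langle p,q\cdot B\rangle_W=-\int_a^b pW(q\cdot B)^\ast\,dx$ from the explicit form \eqref{eq:Bhat} of $B$, giving
\[
(q\cdot B)^\ast=F_2^\ast(\Up^{-1})^\ast(q')^\ast-F_2^\ast(\Up^{-1})^\ast(\Up')^\ast(\Up^{-1})^\ast q^\ast+F_1^\ast(\Up^{-1})^\ast q^\ast+F_2^\ast(\phi^{-1}\phi')^\ast(\Up^{-1})^\ast q^\ast .
\]
Comparing the two expressions, the $(q')^\ast$ terms and the $(\Up')^\ast$ terms coincide verbatim, so it only remains to see that $\bigl((WF_2^\ast)'+\phi^{-1}\phi'\,WF_2^\ast\bigr)(\Up^{-1})^\ast q^\ast$ equals $W\bigl(F_1^\ast+F_2^\ast(\phi^{-1}\phi')^\ast\bigr)(\Up^{-1})^\ast q^\ast$; this is precisely \eqref{eq:strange-condition-v1} right-multiplied by $(\Up^{-1})^\ast q^\ast$, so Lemma~\ref{lem:strange-relation} finishes the proof.

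There is no genuine difficulty beyond careful bookkeeping: because the coefficients and $\Up$ do not commute and several adjoints are taken, the main risk is mis-ordering a product or dropping a sign when differentiating $\Up^{-1}$ and then applying $\ast$. The point to organise the computation around is that the "integration-by-parts" identity \eqref{eq:strange-condition-v1} was tailored exactly so as to be what survives once the boundary term and the $(q')^\ast$ and $(\Up')^\ast$ contributions have cancelled; making those cancellations visible is the whole content of the argument.
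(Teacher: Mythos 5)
Your proposal is correct and follows essentially the same route as the paper: expand $\langle p\cdot A,q\rangle_{\widehat W}$ via the definition of $A$ and of $\widehat W$, integrate by parts once, kill the boundary term by hypothesis, and match the surviving terms against $(q\cdot B)^\ast$ using the expression \eqref{eq:Bhat}, with the final identification being exactly Lemma~\ref{lem:strange-relation}. The only cosmetic difference is that you cancel $\Up\,\Up^{-1}$ at the outset and land on \eqref{eq:strange-condition-v1} directly, whereas the paper keeps $\Up\widehat W$ intact and reduces to the adjoint of that identity; the two are equivalent via \eqref{eq:symmetry1}.
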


\begin{proof}
We use the definition of $A$ and perform integration by parts to find
\begin{align*}
\langle p\cdot A, q \rangle_{\widehat{W}}=&\, \int_a^b p'\Upsilon \widehat{W} q^\ast-\int_a^b p\phi^{-1}\phi'\Up\widehat{W}q^\ast \\ 
=&\,(p\Up\widehat{W}q^\ast)\biggm|_a^b-\int_a^b p(\Up\widehat{W}q^\ast)'
-\int_a^b p\phi^{-1}\phi'\Up\widehat{W}q^\ast,
\end{align*}
where we drop the arguments in order to ease notation. 
By Definition \ref{defn:exceptional-weight} and the vanishing of the boundary term
we see that 
\begin{align*}
- \langle p\cdot A, q \rangle_{\widehat{W}}=&\, \int_a^b p
W \Bigl( W^{-1}(\Up\widehat{W}q^\ast)' + W^{-1}\phi^{-1}\phi'\Up\widehat{W}q^\ast\Bigr).
\end{align*}
It remains to show that the term in parentheses equals $(qB)^\ast$. Using 
Definition \ref{defn:exceptional-weight} we see that the adjoint of the term in 
parentheses equals 
\begin{align*}
q' \Up^{-1}F_2 + q \bigl( (\Up^{-1}F_2W)'
W^{-1} + \Up^{-1} F_2 W(\phi^{-1}\phi')^\ast W^{-1}\bigr).
\end{align*}
Comparing with \eqref{eq:Bhat} we see that the term for the derivative is correct, and 
in order to recognise this term as $qB$ we need to show 
\[
 (\Up^{-1}F_2W)'
W^{-1} + \Up^{-1} F_2 W(\phi^{-1}\phi')^\ast W^{-1} = 
-\Up^{-1}\Up'\Up^{-1}F_2+\Up^{-1}F_1+\Up^{-1}\phi^{-1}\phi'F_2.
\]
Writing $(\Up^{-1}F_2W)'
W^{-1} = -\Up^{-1}\Up\Up^{-1}F_2 + \Up^{-1}(F_2W)'W^{-1}$, we see that 
$-\Up^{-1}\Up'\Up^{-1}F_2$ cancels on both sides. Multiplying the remaining equality from the left by $\Up$  and from the right by $W$, we see that we need to show 
\[
F_2'W+F_2W' + F_2W(\phi^{-1}\phi')^\ast = F_1W + \phi^{-1}\phi' F_2W.
\]
In turn this is equivalent to the adjoint of \eqref{eq:strange-condition-v1}. 
\end{proof}

\begin{example}\label{exa:Pearson}
We show that Lemma \ref{lem:strange-relation} is a generalization of the 
Pearson equation. We discuss the case as in \cite{KoelR}, but it can be done for more examples of this nature for either differential or difference operators \cite{KdlRR}, \cite{IKR2}, \cite{EMR}. 
Take $T_0$ the second order difference operator defined in 
\cite[Cor.~6.3]{KoelR}, 
$T_0=\frac{d^2}{dx^2}\Phi(x)^\ast+\frac{d}{dx}\Psi(x)^\ast$, where 
$\Phi$, respectively $\Psi$, is an explicit matrix polynomial of degree $\leq 2$, respectively 
$\leq 1$, see \cite[Prop.~5.1, Prop.~5.2]{KoelR}. For the seed function we take the constant identity matrix
$\phi=P_0^{(\alpha,\nu)}=\Id$ so that $\la=0$. Next we take $\Up=\Id$ and so 
$A=\frac{d}{dx}$ and $B=\frac{d}{dx}\Phi^\ast+\Psi^\ast$, which is denoted by 
$S^{(\al,\nu)}$ in \cite[Prop.~6.1]{KoelR}. In this case, $W$ is defined as in 
\cite{KoelR}, which is recalled in \eqref{eq:defWeightMLaguerrepols}, and  
$\widehat{W}$ is the same weight with $\nu$ replaced by $\nu+1$. Then the result 
of Proposition \ref{prop:-BadjA} is \cite[Prop.~6.1]{KoelR}, and 
\eqref{eq:strange-condition-v1} is the (weak) Pearson equation
$(W\Phi)'=W\Psi$. However, the matrix polynomials of other degree do not satisfy the 
requirement for a seed function, since the eigenvalue matrix is not a multiple
of the identity, see \cite[Cor.~6.3]{KoelR} for the expression. 
\end{example}

\begin{rmk}\label{rmk:Pearson-N=1}
In the scalar case $N=1$, \eqref{eq:strange-condition-v1} reduces to 
$(WF_2)'=WF_1$, which is the (weak) Pearson equation as in Example \ref{exa:Pearson}.
This can be solved as $W=\frac{1}{F_2}\exp\left(\int \frac{F_1}{F_2}\right)$. 
\end{rmk}

So assuming $\Pol$ is dense in $L^2(W)$ and $L^2(\widehat{W})$ and $A$ preserving $\Pol$ and $B$ mapping $\Pol$ into $L^2(W)$ we can view $A$ and $B$ as densely defined  operators
\begin{equation*}
\begin{split}
&A \colon D(A)=\Pol \subset L^2(W) 
\to L^2(\widehat{W}), \\
&B \colon D(B)=\Pol \subset  L^2(\widehat{W})
\to L^2(W). 
\end{split}
\end{equation*}
Then Proposition \ref{prop:-BadjA} shows that the adjoint of $A$ 
is densely defined, and $A$ is  a closable operator. Similarly, $B$ is  a densely defined closable operator. We denote the closures 
by $\overline{A}$ and $\overline{B}$ with respective domain $D(\overline{A})$ 
and $D(\overline{B})$. Moreover, Proposition \ref{prop:-BadjA} shows that 
$\overline{A}\subset -\overline{B}^\ast$ and $\overline{B}\subset -\overline{A}^\ast$.
Next we additionally assume that $T_0$ is defined on $\Pol \subset L^2(W)$  and  
that the operator $T_0 \colon \Pol \subset L^2(W) \to L^2(W)$ is essentially
self adjoint. 
Then we have as inclusions for unbounded 
operators on $L^2(W)$
\begin{equation*}
\begin{split}
T_0  = A\, B +\la \subset \overline{A}\, \overline{B} + \la \subset - \overline{A} \,\overline{A}^\ast + \la
\subset  \overline{B}^\ast\, \overline{A}^\ast + \la \subset (\overline{A}\, \overline{B})^\ast + \la \subset T_0^\ast = \overline{T_0}.
\end{split}
\end{equation*}
Since $\overline{T_0}$ and $- \overline{A}\, \overline{A}^\ast + \la$ are self-adjoint
 for $\la\in \R$, 
see e.g. \cite[Ch.~13]{Rudi}, we get 
\begin{equation}\label{eq:exprforT0}
- \overline{A} \,\overline{A}^\ast + \la
=  \overline{B}^\ast\, \overline{A}^\ast + \la = (\overline{A}\, \overline{B})^\ast + \la = T_0^\ast = \overline{T_0}
\end{equation}
as operators acting from the right on $L^2(W)$. 
Similarly, we have as operators acting from the right on 
$L^2(\widehat{W})$ 
\begin{equation*}
T_1  = B\,A + \la \subset \overline{B}\,  \overline{A}+ \la \subset - \overline{A}^\ast\, \overline{A} + \la \subset  \overline{A}^\ast \overline{B}^\ast + \la \subset (\overline{B}\, \overline{A})^\ast + \la
\end{equation*}
and we define $S_1 = - \overline{A}^\ast\, \overline{A} +\la$ as the appropriate self-adjoint extension of $T_1$ acting from the right. According to Deift \cite[Thm.~2]{Deif}, we have 
$\si(\overline{T_0})\setminus\{\la\} = \si(S_1)\setminus\{\la\}$.  
So the spectrum of the self-adjoint operators 
$\overline{T_0}$ and $S_1$ is the same up to possibly one point. 
Moreover, $\si(\overline{T_0})$ and $\si(S_1)$ are contained in $(-\infty,\la]$.


\section{Matrix valued exceptional polynomials}\label{sec:matriXOL}
Having made the set-up of the Sections \ref{sec:2ndoperatorsfacDarbux} 
and \ref{sec:symmdiffopexceptweights}, we assume that all conditions on these sections are satisfied, namely:
\begin{enumerate}
\item \textbf{On the seed function:} There exists a seed function $\phi$ with eigenvalue $\lambda\in\mathbb{R}$ for the operator $T_0.$ The function $\phi(x)$ is invertible as a $N\times N$-matrix, except 
on a finite set of points $x\in\C$. Additionally $\phi^{-1}\phi'$ is a matrix rational function. 
\item \textbf{On the intertwining operator $A$:} There exists a matrix polynomial $\Upsilon$ such that the intertwining operator $A=\frac{d}{dx}\Upsilon-\phi^{-1}\phi\Upsilon$ preserves Pol and the regularity of leading coefficients. The matrix polynomial $\Upsilon$ is invertible except on a finite set.
\item \textbf{On the weight functions:} The matrix weight functions $W$ and $\widehat{W}$ are positive definite with finite moments of all orders. $\Pol$ is dense in $L^2(W)$ and $L^2(\widehat{W})$.
\item \textbf{On the operator $T_0$:} The operator $T_0$ is defined on $\Pol \subset L^2(W)$  and  $T_0 \colon \Pol \subset L^2(W) \to L^2(W)$ is essentially
self adjoint. 
\item \textbf{On the operator $B$:} The operator $B$ on \eqref{eq:Bhat} maps $\Pol$ into $L^2(W)$ 
\item \textbf{Vanishing and boundary conditions:} The functions $x\mapsto \phi(x)'F_2(x)W(x)\phi(x)^\ast,$ $x\mapsto \phi(x)F_1(x)W(x)\phi(x)^\ast,$ vanish at the same endpoint $a$ or $b$ for the weight $W$. The boundary condition $(pWF_2^\ast(\Upsilon^{-1})^\ast q^\ast)|_a^b=0,\quad \forall\, p,q\in \Pol $ holds.
\end{enumerate}
We additionally assume that there is a 
sequence of matrix orthogonal polynomials $(P_n)_{n\in \N_0}$ in $L^2(W)$, i.e. each row of $P_n$ is a polynomial of degree $n$ in $L^2(W)$, so that $P_n(x) = x^n + \text{lot}$ is a monic polynomial and the orthogonality relations 
\begin{equation}\label{eq:genMVOPforW}
\int_a^b P_n(x) W(x) (P_m(x))^\ast\, dx = \de_{m,n} H_n
\end{equation}
hold. Here the integration is done entrywise, so that the inner product of the $i$-th row of $P_n$ and the $j$-the row of $P_m$ corresponds to the $(i,j)$-entry of 
the right hand side, i.e. $\de_{m,n} (H_n)_{i,j}$. Note in particular, $H_n$ is a
positive definite matrix. So we can also define a matrix valued inner product on 
matrix functions for which each row is contained in $L^2(W)$. The corresponding 
matrix valued inner product is also denoted by $\langle \cdot, \cdot\rangle_W$ 
Moreover, we assume that $P_n$ is an eigenfunction for 
$T_0$ for each $n\in \N_0$, i.e. 
\begin{equation}\label{eq:PneigenfunctionsT0}
P_n\cdot T_0 = \Ga_n \cdot P_n, \qquad \forall \, n\in \N_0.
\end{equation}
Note that this implies that the coefficients $F_i$ of $T_0$ are matrix polynomials of at most degree $i$ for $i\in \{0,1,2\}$ and so $T_0$ preserves $\Pol$. 

\begin{thm}
\label{thm:ortogonalidad-excep}
Consider the monic matrix orthogonal polynomials $(P_n)_{n\in \N_0}$, where the orthogonality is with respect to $W$ as in \eqref{eq:genMVOPforW}. Assume moreover that each $P_n$ is a matrix eigenfunction of $T_0$ as in \eqref{eq:PneigenfunctionsT0}. 
Then the sequence of matrix polynomials $(\widehat{P}_n)_{n\in \N_0}$, where $\widehat{P}_n= P_n\cdot A$, forms 
a set of matrix eigenfunctions for $S_1$. For all $n\in \mathbb{N}_0$, the polynomial  $\widehat{P}_n$ satisfies
\[
\widehat{P}_n  \cdot T_1 = \Ga_n \cdot \widehat{P}_n,
\]
and
\[
\int_a^b \widehat{P}_n(x) \widehat{W}(x) (\widehat{P}_m(x))^\ast\, dx = \de_{m,n} \widehat{H}_n, \qquad \widehat{H}_n= H_n (\la -\Ga_n)^\ast,
\]where $\widehat{H}_n$ is a positive definite matrix. Moreover, if we assume $\la\not\in \si(\Ga_n)$ for all $n\in \mathbb{N}_0$, then $\det \widehat{H}_n \neq 0$.
\end{thm}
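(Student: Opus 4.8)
The plan is to prove the three assertions — the eigenvalue equation, the orthogonality relation with the stated squared norm, and the invertibility of $\widehat{H}_n$ — in that order, using the factorizations and the adjoint relation already established.

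First I would settle $\widehat{P}_n\cdot T_1 = \Ga_n\cdot\widehat{P}_n$. By definition $\widehat{P}_n = P_n\cdot A$, and by \eqref{eq:v2-int-relation} we have $AT_1 = T_0 A$, so $\widehat{P}_n\cdot T_1 = P_n\cdot(AT_1) = P_n\cdot(T_0A) = (P_n\cdot T_0)\cdot A = (\Ga_n\cdot P_n)\cdot A = \Ga_n\cdot(P_n\cdot A) = \Ga_n\cdot\widehat{P}_n$, where the last step uses that $\Ga_n$ is a constant matrix acting from the left and $A$ acts from the right, so the two actions commute (cf. Remark \ref{rmk:eigenfunctions-of-T1}). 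Since $A$ preserves $\Pol$, each $\widehat{P}_n$ is a genuine matrix polynomial; that it lies in $L^2(\widehat{W})$ follows from the finite-moment assumption, and being an eigenfunction of $T_1$ in $L^2(\widehat{W})$ it is an eigenfunction of the self-adjoint extension $S_1 = -\overline{A}^\ast\overline{A}+\la$.

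Next, for the orthogonality and the norm: I would compute $\langle \widehat{P}_n,\widehat{P}_m\rangle_{\widehat{W}} = \langle P_n\cdot A, P_m\cdot A\rangle_{\widehat{W}}$ at the level of the matrix-valued inner product. Apply Proposition \ref{prop:-BadjA} (in its matrix-valued form) to move $A$ across: $\langle P_n\cdot A, P_m\cdot A\rangle_{\widehat{W}} = -\langle P_n, (P_m\cdot A)\cdot B\rangle_W$. Now $(P_m\cdot A)\cdot B = P_m\cdot(AB) = P_m\cdot(T_0-\la) = \Ga_m\cdot P_m - \la P_m = (\Ga_m - \la)\cdot P_m$, using Lemma \ref{lem:TO=AB+la} (via \eqref{eq:v2-int-relation}) and \eqref{eq:PneigenfunctionsT0}. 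Hence $\langle\widehat{P}_n,\widehat{P}_m\rangle_{\widehat{W}} = -\langle P_n, (\Ga_m-\la)\cdot P_m\rangle_W = \langle P_n, (\la - \Ga_m)\cdot P_m\rangle_W$. Pulling the constant matrix $(\la-\Ga_m)$ out of the right slot of the matrix inner product produces a factor $(\la-\Ga_m)^\ast$ on the right: $\langle P_n, (\la-\Ga_m)\cdot P_m\rangle_W = \langle P_n, P_m\rangle_W (\la - \Ga_m)^\ast = \de_{m,n} H_n (\la-\Ga_n)^\ast$. This is exactly $\de_{m,n}\widehat{H}_n$ with $\widehat{H}_n = H_n(\la-\Ga_n)^\ast$, which gives both the vanishing for $n\neq m$ and the formula for the norm. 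The hypotheses needed for Proposition \ref{prop:-BadjA} — that $B$ maps $\Pol$ into $L^2(W)$ and the boundary term vanishes — are among the standing assumptions (5) and (6).

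Finally, positive definiteness and invertibility of $\widehat{H}_n$. That $\widehat{H}_n$ is positive semidefinite is automatic since $\widehat{H}_n = \langle\widehat{P}_n,\widehat{P}_n\rangle_{\widehat{W}}$ with $\widehat{W}$ a genuine weight. For invertibility under the extra hypothesis $\la\notin\si(\Ga_n)$: the matrix $\la - \Ga_n$ is then invertible, so $(\la-\Ga_n)^\ast$ is invertible, and since $H_n$ is positive definite hence invertible, $\det\widehat{H}_n = \det H_n\cdot\overline{\det(\la-\Ga_n)}\neq 0$. (Positive definiteness of $\widehat{H}_n$ as a self-adjoint matrix then follows from it being a positive semidefinite invertible matrix; alternatively one sees $\widehat{H}_n = H_n(\la-\Ga_n)^\ast$ is conjugate to a positive definite matrix via the symmetry of $T_0$, which forces $H_n$ and $\Ga_n$ to be compatible so that $H_n(\la-\Ga_n)^\ast$ is self-adjoint.) I expect the only real subtlety to be the bookkeeping of left/right actions when moving constant matrices in and out of the matrix-valued inner product, and checking that Proposition \ref{prop:-BadjA}, stated there for the scalar inner product on row polynomials, applies verbatim to the matrix-valued inner product by reading it row by row; everything else is a direct chain of the already-proven identities.
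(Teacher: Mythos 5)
Your proposal is correct and follows essentially the same route as the paper: the eigenvalue equation comes from the intertwining/factorization structure (the paper phrases it via $S_1\supset BA+\la$ and the computation $\widehat{P}_n\cdot B=(\Ga_n-\la)\cdot P_n$, which is exactly the ingredient you supply when you note that $S_1$ extends $T_1$ on polynomials, cf.\ Remark \ref{rmk:eigenfunctions-of-T1}), and the orthogonality is obtained by the identical chain $\langle P_n\cdot A,P_m\cdot A\rangle_{\widehat{W}}=-\langle P_n,P_m\cdot AB\rangle_W=\langle P_n,P_m\cdot(\la-T_0)\rangle_W=\de_{n,m}H_n(\la-\Ga_n)^\ast$ via the matrix-valued extension of Proposition \ref{prop:-BadjA}. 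Your treatment of positive definiteness and of $\det\widehat{H}_n\neq 0$ matches the paper's as well.
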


\begin{proof} 
Since $S_1$ extends $B\, A+\la$ and $\widehat{P}_n \cdot B= P_n  \cdot AB = (\Ga_n-\la) \cdot P_n$ 
is a polynomial, we see that 
\[
\widehat{P}_n \cdot S_1 = (\Ga_n-\la) \cdot P_n \cdot A + \la \widehat{P}_n = \Ga_n \cdot \widehat{P}_n,
\]
cf. Remark \ref{rmk:eigenfunctions-of-T1}. 
From Proposition \ref{prop:-BadjA} for row vector polynomials, we can extend to 
$\langle P \cdot A, Q\rangle_{\widehat{W}}=-\langle P,Q \cdot B\rangle_W$ for all $P$, $Q$ matrix polynomials. So we get
\begin{equation*}
\begin{split}
\langle \widehat{P}_n,\widehat{P}_m\rangle_{\widehat{W}}\, = &\, \langle P_n \cdot A, P_m \cdot A\rangle_{\widehat{W}}\, =\, -\langle P_n,P_m \cdot AB\rangle_W \\
=&\, \langle P_n, P_m \cdot (\la -T_0)\rangle_W \, =\,\de_{n,m}\mathcal{H}_n(\la-\Ga_n)^\ast. 
\end{split}
\end{equation*}
Note that $H_n (\la-\Ga_n)^\ast = (\la-\Ga_n) H_n$, since we can also take $AB$ in 
the first leg of the inner product. By positivity of the inner product we also see that $H_n (\la-\Ga_n)^\ast$ is a positive
definite matrix. Moreover, if $\la\not\in \si(\Ga_n)$ for all $n\in \mathbb{N}_0$, then $\det \widehat{H}_n \neq 0.$
\end{proof}

In general the degree of $\widehat{P}_n$ is not $n$, but higher since 
$A$ does not in general preserve the degree of the polynomial. 
Moreover, Theorem \ref{thm:ortogonalidad-excep} also does not claim that the rows of 
$(\widehat{P}_n)_{n\in \N_0}$ form a basis for $L^2(\widehat{W})$. 

\begin{rmk}
Observe that the polynomials $\widehat{P}_n$ depend on the second order operator $T_0$, the seed function $\phi$ and the polynomial $\Upsilon.$ We call them matrix valued exceptional polynomials associated to $(T_0,\phi, \Upsilon)$.
\end{rmk}

\begin{prop}\label{prop:densityofexceptpols}
The rows of $\widehat{P}_n$, $n\in \N_0$, are dense in $L^2(\widehat{W})$ if and only if $\la\not\in \si_p(S_1)$. 
\end{prop}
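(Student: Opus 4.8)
The plan is to reduce both the density statement and the eigenvalue condition to the single equation $\Ker(\overline{A}^\ast)=\{0\}$, where $\overline{A}\colon L^2(W)\to L^2(\widehat{W})$ is the closure of the operator $A$ from Section \ref{sec:symmdiffopexceptweights}. Everything then follows from standard closed-operator theory together with one elementary computation on monic polynomials.

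First I would identify the closed linear span of the rows of the $\widehat{P}_n$. Since $P_n=x^n+\text{lot}$ is monic, an induction on the degree shows that the union over $n\in\N_0$ of the rows of $P_n$ spans $\Pol$ as a vector space: from a row $x^n e_i+\text{lot}$ of $P_n$ one subtracts the lower order terms, which already lie in the span of rows of $P_0,\dots,P_{n-1}$. Applying $A$ and using $\widehat{P}_n=P_n\cdot A$ together with the rowwise action of differential operators, the linear span of the rows of all the $\widehat{P}_n$ equals $A(\Pol)=\operatorname{Ran}(A)$. Because $\Pol$ is, by construction, a core for $\overline{A}$, the closure of $A(\Pol)$ in $L^2(\widehat{W})$ coincides with $\overline{\operatorname{Ran}(\overline{A})}$: any $f\cdot\overline{A}$ with $f\in D(\overline{A})$ is a limit of vectors $p_k\cdot A$ with $p_k\in\Pol$, and conversely $\operatorname{Ran}(A)\subseteq\operatorname{Ran}(\overline{A})$. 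Hence the rows of the $\widehat{P}_n$ are dense in $L^2(\widehat{W})$ if and only if $\operatorname{Ran}(\overline{A})$ is dense, i.e. (since $\overline{A}$ is densely defined) if and only if $\operatorname{Ran}(\overline{A})^{\perp}=\Ker(\overline{A}^\ast)=\{0\}$.

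Next I would pin down $S_1-\la$. Reading the composition $-\overline{A}^\ast\,\overline{A}$ in the right-action convention of Section \ref{sec:preliminaries}, the operator $S_1-\la$, acting from the right on $L^2(\widehat{W})$, is exactly von Neumann's nonpositive self-adjoint operator $-\overline{A}\,\overline{A}^\ast$ attached to the closed, densely defined operator $\overline{A}\colon L^2(W)\to L^2(\widehat{W})$ (its adjoint $\overline{A}^\ast$ is densely defined by Proposition \ref{prop:-BadjA}); this is precisely the self-adjoint extension singled out in Section \ref{sec:symmdiffopexceptweights}. A one-line computation then gives $\Ker(S_1-\la)=\Ker(\overline{A}\,\overline{A}^\ast)=\Ker(\overline{A}^\ast)$: if $\overline{A}^\ast f=0$ then clearly $f\in\Ker(\overline{A}\,\overline{A}^\ast)$, and conversely $\overline{A}\,\overline{A}^\ast f=0$ forces $0=\langle\overline{A}\,\overline{A}^\ast f,f\rangle_{\widehat{W}}=\|\overline{A}^\ast f\|_W^2$, hence $\overline{A}^\ast f=0$.

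Combining the two steps, the rows of the $\widehat{P}_n$ are dense in $L^2(\widehat{W})$ $\iff$ $\Ker(\overline{A}^\ast)=\{0\}$ $\iff$ $\Ker(S_1-\la)=\{0\}$ $\iff$ $\la\notin\si_p(S_1)$, which is the assertion. Apart from the elementary span computation in the first step, the only point that genuinely needs care — and which I would spell out explicitly — is the bookkeeping with the right-action convention, making sure that $S_1-\la$ is the operator $\overline{A}\,\overline{A}^\ast$ living on $L^2(\widehat{W})$ rather than $\overline{A}^\ast\,\overline{A}$ living on $L^2(W)$, so that its kernel is $\Ker(\overline{A}^\ast)$ and not $\Ker(\overline{A})$; with that settled the proof is routine.
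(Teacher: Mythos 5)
Your proof is correct and follows essentially the same route as the paper's: both arguments come down to the identity $\Ker(S_1-\la)=\Ker(\overline{A}^\ast)=\operatorname{Ran}(\overline{A})^{\perp}$, with perpendicularity to all rows of the $\widehat{P}_n$ translated into membership in $\Ker(\overline{A}^\ast)$ and the positivity computation $\langle f\cdot\overline{A}^\ast,f\cdot\overline{A}^\ast\rangle_W=0$ closing the loop. You merely make explicit two points the paper leaves implicit, namely that the rows of the monic $P_n$ span $\Pol$ and that $\Pol$ is a core for $\overline{A}$, and you correctly handle the right-action bookkeeping identifying $S_1-\la$ with von Neumann's operator on $L^2(\widehat{W})$.
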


\begin{proof} Consider $A\colon \Pol \subset L^2(W)\to L^2(\widehat{W})$ and assume 
$f\in L^2(\widehat{W})$ is perpendicular to $v \widehat{P}_n$ for all $n\in \N_0$ and all row vectors $v$. Then 
$0 = \langle v \widehat{P}_n, f\rangle_{\widehat{W}} 
= \langle v P_n\cdot A, f\rangle_{\widehat{W}}$, so that this is a continuous linear 
functional on $D(A)=\Pol$. In particular, $f\in D(A^\ast)=D(\overline{A}^\ast)$ and 
$f\, A^\ast=0$. Then $f\in D(S_1)$ and $f\cdot S_1 =\la\cdot f$. So if the rows of
$\widehat{P}_n$, $n\in \N_0$, are not dense there is a non-zero $f$ and $\la\in \si_p(S_1)$. 

Conversely, if $\la\in \si_p(S_1)$ we have $0\not=f \in D(S_1) \subset D(\overline{A}^\ast)$ with $f\cdot S_1= \la\cdot f$ in $L^2(\widehat{W})$, so that in particular $f\, \overline{A}^\ast \overline{A}=0$. Taking inner product with $f\in D(\overline{A}^\ast)$ we obtain 
$\langle f\cdot \overline{A}^\ast, f\cdot\overline{A}^\ast\rangle_{W}=0$ 
and $f\in \Ker(\overline{A}^\ast)$. Then $f$ is perpendicular to all $v\widehat{P}_n$ 
as in the previous paragraph. 
\end{proof}


\section{Construction of exceptional polynomials via a diagonalizable operator $T_0$}
\label{sec:T0-diagonalizable}
In this section we study families of matrix valued exceptional polynomials obtained from a second order differential operator which is diagonalizable via a nonconstant matrix. 

Let $w_1(x), \ldots, w_N(x)$ be a collection of classical scalar weights such that all $w_i$'s belong to one of the families of Jacobi, Laguerre or Hermite weights. For each weight $w_i$, $i=1,\ldots, N$, we take the a symmetric second order differential operator $\mathcal{L}_i$ with respect to $w_i$.

We now consider a weight matrix $W$ having a LDU decomposition 
\begin{equation}
\label{eq:weightstructure5}
W(x)=L(x)D(x)L(x)^\ast,    
\end{equation}
where $L$ is lower triangular $N\times N$ polynomial matrix with $L_{i,i}=1$ for all $i=1,\ldots,N$, and 
$D=\mathrm{diag}(w_1,\ldots, w_N)$. We observe that the structure of $W$ agrees with the Gegenbauer, Laguerre and Hermite type families studied in \cite{KdlRR}, \cite{KoelR}, \cite{IKR2} and \cite{DuranManuel}. The diagonal operator $T^d_0=\mathrm{diag}(\mathcal{L}_1,\ldots, \mathcal{L}_N)$ is symmetric with respect to the diagonal weight matrix $D$. We will write $T^d_0=\frac{d^2}{dx^2}F^d_2+\frac{d}{dx}F^d_1+F^d_0$.

Let $T_0=\frac{d^2}{dx^2}F_2(x)+\frac{d}{dx}F_1(x)+F_0(x)$ be the second order differential operator obtained by 
conjugation of $T^d_0$ by $L$. Then the coefficients $F_i$ and $F^d_i$ are related by:
$$F_2L=LF^d_2,\qquad    F_1L= 2\frac{dL}{dx}F^d_2+L F^d_1, \qquad  F_0L=\frac{d^2L}{dx^2}F^d_2+\frac{dL}{dx}F^d_1+LF^d_0.$$
\begin{rmk}
It follows from \cite[Prop. 4.2]{KdlRR} that $T_0$ is symmetric with respect to $W$ if and only if $T^d_0$ is symmetric with respect to $D$. We observe that given $\phi$ an eigenfunction of $T_0$ for eigenvalue $\Lambda,$ $\phi L$ is an eigenfunction of $T^d_0$ for the same eigenvalue. Similarly, given $\phi^d$ an eigenfunction of $T^d_0,$ $\phi^d L^{-1}$ is eigenfunction of $T_0.$    
\end{rmk}

We assume that all conditions stated at the beginning of Section \ref{sec:matriXOL} are satisfied for the operator $T^d_0$. In particular we assume that there exists $\phi^d$ a diagonal seed function for $T^d_0$ of scalar eigenvalue $\lambda$, and a diagonal matrix polynomial $\Upsilon$, invertible except on a finite set, such that the intertwining operator $A^d=\frac{d}{dx}\Upsilon-(\phi^{d})^{-1}(\phi^d)'\Upsilon$ preserves Pol and preserves the regularity of leading coefficients of polynomials. The operator $T_0^d$ factorizes as $T_0^d=A^dB^d+\lambda,$ where $B^d$ is given in \eqref{eq:Bhat}. The exceptional diagonal weight is given by 
$$\widehat{D}(x)=\Upsilon(x)^{-1}D(x)F^d_2(x)^\ast(\Upsilon(x)^{-1})^\ast.$$

Given that $T_0=LT_0^dL^{-1},$ we can transfer this to the non-diagonal setting. We get 
\begin{equation*}
T_0=(LA^dL^{-1})(LB^dL^{-1})+\lambda. 
\end{equation*}
We assume that $\Upsilon$ is taken in such a way that $A=LA^dL^{-1}$ also preserves Pols. In the case that $\Upsilon$ commutes with $L^{-1}$, the exceptional non-diagonal weight is given by 
$$\widehat{W}(x)=\Upsilon(x)^{-1}W(x)F_2(x)^\ast(\Upsilon(x)^{-1})^\ast=L(x)\widehat{D}(x)L(x)^\ast.$$
\begin{rmk}
Since $\phi^dL^{-1}$ is an eigenfunction of $T_0$ of scalar eigenvalue $\lambda$, it can be used to construct the first order operator given in \eqref{eq:defA}. Explicitly we
obtain
\begin{equation}
    \label{eq:conjugacion1}
\frac{d}{dx}\Upsilon -(L(\phi^d)^{-1}(\phi^d)'L^{-1} - L'L^{-1})\Upsilon.
\end{equation}
On the other hand, 
\begin{equation}
\label{eq:conjugacion2}
LA^dL^{-1} = \frac{d}{dx}L\Upsilon L^{-1} - L(\phi^d)^{-1}(\phi^d)'\Upsilon L^{-1} + L'\Upsilon L^{-1}.
\end{equation}
If $\Upsilon$ commutes with $L^{-1}$, then \eqref{eq:conjugacion1} and \eqref{eq:conjugacion2} coincide.
\end{rmk}
The sequence of monic orthogonal polynomials for the diagonal weight $D$ is given by $P_n^d(x)=\mathrm{diag}(p^1_n(x),\ldots, p^N_n(x))$ where $(p_n^i)_n$ are the monic orthogonal polynomials with respect to $w_i$. On the other hand, the sequence of matrix valued polynomials $Q_n(x) = P_n^d(x)L(x)^{-1}$ is orthogonal with respect to $W$ and each $Q_n(x)$ is an eigenfunction of $T_0$. However, the leading coefficient of $Q_n$ is, in general, not invertible.

By applying the operator $A$ to the sequence $Q_n$ we obtain
$$\widehat Q_n(x)=Q_n(x)\cdot A = (P_n^d(x) L(x)^{-1})\cdot  (L(x)A^dL(x)^{-1}) = (P_n^d(x) \cdot A^d) L(x)^{-1}.$$
By the construction, the sequence $\widehat Q_n(x)$ satisfies the following properties:
\begin{enumerate}
\item $\widehat Q_n$ is an eigenfunction of the Darboux transform of $T_0$, see \eqref{eq:v2-int-relation},
\item $(\widehat Q_n)_n$ forms an orthogonal family with respect to the weight $\widehat W$.
\end{enumerate}
We observe that we cannot guarantee the regularity of the leading coefficients of $\widehat Q_n$. The polynomials $\widehat P_n(x)= P_n(x) \cdot A$, where $(P_n)_n$ are the monic orthogonal polynomials with respect to $W$ also satisfy the properties (1) and (2). The regularity of the leading coefficients is not guaranteed in this case either.


\section{Fourier algebras}
\label{sec:fourier}

Recently, Casper and Yakimov \cite{CaspY} presented a pair of isomorphic algebras, known as the Fourier algebras, which are linked with a sequence of matrix valued orthogonal polynomials for a weight $W$, the so-called Fourier algebras. The goal of this section is to extend some of the properties of the Fourier algebras given in \cite{CaspY} for the exceptional weight $\widehat{W}$.  Next, we adapt the
cons\-truc\-tion of the Fourier algebras given in \cite[Def. 2.20]{CaspY} to our setting:
\begin{defn}
Given a sequence of matrix valued polynomials $(Q_n(x))_{n\in \mathbb{N}_0}$, we define:
\begin{equation}
\label{eq:definition-Fourier-algebras}
\begin{split}
\mathcal{F}_L(Q)&=\{ M\in \mathcal{N}_N \colon \exists \D\in \mathcal{M}_N,\, M\cdot Q = Q\cdot \D \} \subset \mathcal{N}_{N},\\
 \mathcal{F}_R(Q)&=\{ \D\in \mathcal{M}_N \colon \exists M\in \mathcal{N}_N,\, M\cdot Q = Q\cdot \D \}\subset \mathcal{M}_{N}.
\end{split}
\end{equation}
\end{defn}

In this definition, it is important to note that the Fourier algebras are linked to a sequence matrix-valued polynomials $(Q_n)_n$, which is not necessarily the sequence of matrix valued orthogonal polynomials with respect to a weight $W$. 
In particular, if $(P_n(x))_{n\in \mathbb{N}_0}$ is the sequence of MVOPs with respect to a matrix weight, it was established in \cite{CaspY} that the left and right Fourier algebras $\mathcal{F}_L(P)$ and $\mathcal{F}_R(P)$ are isomorphic. We will now show that the same holds true for the exceptional Fourier algebras $\mathcal{F}_L(\widehat P)$ and $\mathcal{F}_R(\widehat P)$. In the following proposition, $\Gamma_n$ denotes the eigenvalue as in \eqref{eq:PneigenfunctionsT0}.

\begin{prop}
\label{prop:psi-well-defined}
For every $\widehat M \in \mathcal{F}_L(\widehat P)$, there exists one and only one $\widehat D \in \mathcal{F}_R(\widehat P)$ such that $\widehat M \cdot \widehat{P}_n(x) = \Ph_n(x)\cdot \Dh$. Conversely, if we assume that $(\Gamma_n-\lambda)$ is invertible for all $n\in\N_0$ then for every $\widehat D \in \mathcal{F}_R(\widehat P)$, there exists a unique $\widehat M \in \mathcal{F}_L(\widehat P)$ such that $\widehat M \cdot \widehat{P}_n(x) = \Ph_n(x)\cdot \Dh$.
\end{prop}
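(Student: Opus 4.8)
The plan is to prove the two assertions separately, each time reducing the claim to a property of the monic orthogonal polynomials $(P_n)_n$ --- which, unlike the $\widehat{P}_n$, have degree exactly $n$ --- by means of the identities $\widehat{P}_n = P_n\cdot A$ and $\widehat{P}_n\cdot B = (\Gamma_n-\lambda)\cdot P_n$ from (the proof of) Theorem \ref{thm:ortogonalidad-excep}, together with the compatibility $(\widehat{M}\cdot Q_n)\cdot D = \widehat{M}\cdot(Q_n\cdot D)$ of the left and right actions.

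For the first assertion, the existence of $\widehat{D}$ is immediate from \eqref{eq:definition-Fourier-algebras}: an element $\widehat{M}\in\mathcal{F}_L(\widehat{P})$ comes, by definition, with some $\mathcal{D}\in\mathcal{M}_N$ satisfying $\widehat{M}\cdot\widehat{P}_n = \widehat{P}_n\cdot\mathcal{D}$ for all $n$, and this same relation --- read now with $\widehat{M}$ as the witnessing difference operator --- shows $\mathcal{D}\in\mathcal{F}_R(\widehat{P})$, so we may take $\widehat{D}=\mathcal{D}$. For uniqueness I would show that any $\mathcal{D}\in\mathcal{M}_N$ with $\widehat{P}_n\cdot\mathcal{D}=0$ for all $n$ vanishes. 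Substituting $\widehat{P}_n=P_n\cdot A$ turns this into $P_n\cdot(A\circ\mathcal{D})=0$ for all $n$; since each $P_n$ is monic of degree $n$, the rows of the $P_n$ span $\Pol$, hence $p\cdot(A\circ\mathcal{D})=0$ for every $p\in\Pol$, and a differential operator in $\mathcal{M}_N$ that annihilates $\Pol$ is zero (evaluate it on $1,x,x^2,\dots$). Thus $A\circ\mathcal{D}=0$; comparing the coefficients of the top-order derivative on both sides gives $\Upsilon\,G=0$, where $G$ is the leading coefficient of $\mathcal{D}$, and since $\Upsilon$ is invertible off a finite set and $G$ is a rational matrix function, $G=0$; iterating yields $\mathcal{D}=0$.

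For the converse, fix $\widehat{D}\in\mathcal{F}_R(\widehat{P})$; by \eqref{eq:definition-Fourier-algebras} there is some $\widehat{M}\in\mathcal{N}_N$ with $\widehat{M}\cdot\widehat{P}_n=\widehat{P}_n\cdot\widehat{D}$, and that relation puts $\widehat{M}$ in $\mathcal{F}_L(\widehat{P})$, so only uniqueness of $\widehat{M}$ requires work. I would take $M=\sum_j A_j(n)\,\delta^j\in\mathcal{N}_N$ with $M\cdot\widehat{P}_n=0$ for all $n\in\N_0$ and apply $B$ on the right: using $M\cdot\widehat{P}_n\cdot B=\sum_j A_j(n)\,(\widehat{P}_{n+j}\cdot B)=\sum_j A_j(n)(\Gamma_{n+j}-\lambda)\cdot P_{n+j}$ one gets $\sum_j A_j(n)(\Gamma_{n+j}-\lambda)\cdot P_{n+j}=0$ for every $n$. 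Because $\deg P_m=m$ with invertible leading coefficient, the finitely many $P_{n+j}$ with $n+j\ge0$ occurring here are linearly independent over constant matrices, so $A_j(n)(\Gamma_{n+j}-\lambda)=0$; this is the one step where the extra hypothesis is used --- invertibility of $\Gamma_{n+j}-\lambda$ forces $A_j(n)=0$, and hence $M=0$ (the values $A_j(n)$ with $n+j<0$ play no role in the action of $M$).

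I expect the two ``leading-coefficient'' computations to be routine. The only real obstacle is conceptual: since $\deg\widehat{P}_n\ne n$ in general, the classical degree arguments do not apply to the $\widehat{P}_n$ directly, and one must first transport the problem onto the $P_n$ --- composing with $A$ for the uniqueness of $\widehat{D}$ in $\mathcal{M}_N$, and composing with $B$ for the uniqueness of $\widehat{M}$ in $\mathcal{N}_N$. It is precisely this second reduction, via $\widehat{P}_n\cdot B=(\Gamma_n-\lambda)\cdot P_n$, that makes the invertibility of $\Gamma_n-\lambda$ indispensable for the converse.
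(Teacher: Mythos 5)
Your proof is correct and follows essentially the same route as the paper's: reduce uniqueness of $\widehat D$ to $A\widehat D=0$ via $\widehat P_n=P_n\cdot A$ and peel off leading coefficients using the invertibility of $\Upsilon$ off a finite set, and reduce uniqueness of $\widehat M$ by composing with $B$ and invoking invertibility of $\Gamma_n-\lambda$. The only difference is that where the paper cites \cite{CaspY}, \cite[Lem.~1]{DeanER} for the fact that an operator annihilating all the monic $P_n$ vanishes, you prove this directly from the degree/leading-coefficient structure of the $P_n$, which is a harmless inlining of the same lemma.
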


\begin{proof}
In order to prove the first statement, it suffices to show that 
if  $\Dh \in\mathcal{F}_R(\widehat P)$ is such that $\Ph_n(x)\cdot \Dh =0$ for all $n\in \N_0$, then $\Dh=0$. Let us assume that $\widehat D$ is an operator of order $s$ given by
$$\Dh = \sum_{k=0}^s \partial_x^k \, F_k(x).$$
Observe that if $\Ph_n(x)\cdot \Dh =0$ for all $n\in \N_0$, then $P_n(x)\cdot A\Dh =0$ for all $n\in \N_0$. Therefore by \cite{CaspY}, \cite[Lem. 1]{DeanER}, $A\Dh =0$. Using the explicit expression \eqref{eq:defA} we obtain that the coefficient of $\partial_x^{s+1}$ in $A\Dh$ is equal to $\Upsilon(x)F_{s}(x)$ and since $\Upsilon(x)$ is invertible except at a finite set, we get that $F_{s}(x)=0$. Proceeding recursively shows that
$F_k(x) = 0$ for all $k=0,\ldots, s$.

For the second statement it is enough to show that if $\Mh \in \mathcal{F}_L(\widehat P)$, and $\Mh\cdot \Ph_n(x)=0$ for all $n\in \mathbb{N}_0$, then $\Mh =0$. Assume that $\Mh \cdot \Ph_n(x) =0$ for all $n\in \N_0$. Then we have
 $$ 0=\Mh \cdot \Ph_n(x) \quad \Rightarrow \quad 0=\Mh \cdot \Ph_n(x)\cdot B =
 \Mh \cdot P_n(x)\cdot AB = \Mh(\Gamma_n-\lambda) \cdot P_n(x),
$$
for all $n\in \N_0$. Now by \cite{CaspY}, \cite[Lem. 1]{DeanER} we get $\Mh(\Gamma_n-\lambda)=0$ for all $n\in \N_0$, and since $(\Gamma_n-\lambda)$ is invertible for all $n\in\N_0$ we obtain $\Mh=0$.
\end{proof}

For the rest of this section, and as in Theorem \ref{thm:ortogonalidad-excep}, we assume that $\Gamma_n - \lambda$ is an invertible matrix for all $n\in \mathbb{N}_0$. By Proposition \ref{prop:psi-well-defined} there is a well
defined vector space isomorphism $\widehat \psi$ from $\mathcal{F}_L(\widehat P)$ to $\mathcal{F}_R(\widehat P)$, given by
$$\widehat \psi(\Mh) = \Dh,$$
where $\widehat D$ is the unique differential operator in $\mathcal{F}_R(\widehat P)$ such that $\Mh\cdot \widehat{P}_n(x) = \widehat{P}_n(x) \cdot \Dh$. Moreover for all $M_1,M_2 \in \mathcal{F}_L(P)$ we have that
$$\widehat{P}_n\cdot \psi(M_1M_2)  = M_1M_2 \cdot \widehat{P}_n = \widehat{P}_n \cdot \psi(M_2)\psi(M_1).$$
Since differential operators act form the right, we get that $\psi$ is an algebra isomorphism.

\subsection{Relation between the Fourier algebras of standard and exceptional polynomials}
In this subsection we introduce linear maps relating the Fourier algebras for the monic orthogonal polynomials and matrix exceptional polynomials.
\begin{prop}
\label{prop:maps-fourier}
    The following relations hold true:
    \begin{enumerate}
        \itemsep 0.28em
        \item If $\widehat D\in \mathcal{F}_R(\widehat P)$, then  $A\widehat D B \in \mathcal{F}_R(P)$. 
        \item If $D\in \mathcal{F}_R(P)$, then  $BDA\in \mathcal{F}_R(\widehat P)$.
        \item If $\widehat M\in \mathcal{F}_L(\widehat P)$, then  $\widehat M(\Gamma_n -\lambda) \in \mathcal{F}_L(P)$. 
        \item $M\in \mathcal{F}_L(P)$, then  $(\Gamma_n -\lambda)M \in \mathcal{ F}_L(\widehat P)$.
    \end{enumerate}
\end{prop}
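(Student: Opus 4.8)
The plan is to reduce all four inclusions to the two intertwining identities already at hand: $\widehat{P}_n=P_n\cdot A$, which is the definition of $\widehat{P}_n$, and $\widehat{P}_n\cdot B=(\Gamma_n-\lambda)\cdot P_n$, which is the identity $P_n\cdot AB=(\Gamma_n-\lambda)\cdot P_n$ used inside the proof of Theorem~\ref{thm:ortogonalidad-excep} and which comes from the factorization $T_0=AB+\lambda$ together with \eqref{eq:PneigenfunctionsT0}; the third ingredient is that the left (discrete) action and the right (differential) action commute, as recalled in Section~\ref{sec:preliminaries}. Throughout, $\Gamma_n-\lambda$ will also denote the diagonal discrete operator $n\mapsto\Gamma_n-\lambda$ in $\mathcal{N}_N$, so that $\widehat{M}(\Gamma_n-\lambda)$ and $(\Gamma_n-\lambda)M$ in the statement are genuine products in $\mathcal{N}_N$.

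Two routine preliminaries are in order: since $A$, $B$, and every operator in $\mathcal{F}_R(P)$ or $\mathcal{F}_R(\widehat{P})$ have rational matrix coefficients, the compositions $A\widehat{D}B$ and $BDA$ again belong to $\mathcal{M}_N$; and since $\Gamma_n-\lambda\in\mathcal{N}_N$, the products $\widehat{M}(\Gamma_n-\lambda)$ and $(\Gamma_n-\lambda)M$ again belong to $\mathcal{N}_N$ (no invertibility of $\Gamma_n-\lambda$ is needed for this proposition, only for the isomorphism statements that follow). I would then establish (1) and (3) together. Starting from $\widehat{M}\in\mathcal{F}_L(\widehat{P})$ with $\widehat{M}\cdot\widehat{P}_n=\widehat{P}_n\cdot\widehat{D}$, and applying $\widehat{P}_n=P_n\cdot A$, then the hypothesis, then the commutation of the two actions, and finally $\widehat{P}_n\cdot B=(\Gamma_n-\lambda)\cdot P_n$, one obtains
\[
P_n\cdot(A\widehat{D}B)=(\widehat{P}_n\cdot\widehat{D})\cdot B=(\widehat{M}\cdot\widehat{P}_n)\cdot B=\widehat{M}\cdot(\widehat{P}_n\cdot B)=\widehat{M}\cdot\bigl((\Gamma_n-\lambda)\cdot P_n\bigr)=\bigl(\widehat{M}(\Gamma_n-\lambda)\bigr)\cdot P_n.
\]
This single identity yields both (1) — the operator $A\widehat{D}B\in\mathcal{M}_N$ is realized inside $\mathcal{F}_R(P)$ by the discrete operator $\widehat{M}(\Gamma_n-\lambda)$ — and (3) — the operator $\widehat{M}(\Gamma_n-\lambda)\in\mathcal{N}_N$ is realized inside $\mathcal{F}_L(P)$ by the differential operator $A\widehat{D}B$.

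Dually, for (2) and (4), starting from $M\in\mathcal{F}_L(P)$ with $M\cdot P_n=P_n\cdot D$, and using $\widehat{P}_n\cdot B=(\Gamma_n-\lambda)\cdot P_n$, the commutation of the two actions together with $P_n\cdot D=M\cdot P_n$, the product in $\mathcal{N}_N$, and finally $\widehat{P}_n=P_n\cdot A$, one obtains
\[
\widehat{P}_n\cdot(BDA)=\bigl((\widehat{P}_n\cdot B)\cdot D\bigr)\cdot A=\bigl((\Gamma_n-\lambda)\cdot(M\cdot P_n)\bigr)\cdot A=\bigl((\Gamma_n-\lambda)M\bigr)\cdot(P_n\cdot A)=\bigl((\Gamma_n-\lambda)M\bigr)\cdot\widehat{P}_n,
\]
which in the same way gives (2) and (4). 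The computations are elementary; the only thing requiring attention is the bookkeeping with the conventions of Section~\ref{sec:preliminaries} — left discrete versus right differential action, and the opposite composition orders in $\mathcal{M}_N$ and $\mathcal{N}_N$ — so I do not expect a genuine obstacle. The conceptual content, worth stating explicitly, is that transporting $B$ (respectively $A$) across a polynomial sequence converts right multiplication by $AB=T_0-\lambda$ into left multiplication by the discrete operator $\Gamma_n-\lambda$, and this exchange is exactly what makes all four inclusions work.
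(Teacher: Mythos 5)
Your proposal is correct and follows essentially the same route as the paper: both reduce all four statements to the identities $\widehat{P}_n=P_n\cdot A$ and $P_n\cdot AB=(\Gamma_n-\lambda)\cdot P_n$ and carry out the same two chains of equalities (the paper proves (1) and (2) explicitly and notes (3), (4) are similar, while you observe that each chain simultaneously yields one right-algebra and one left-algebra inclusion). The only cosmetic difference is that the paper passes through $P_n\cdot(T_0-\lambda)$ where you invoke $\widehat{P}_n\cdot B=(\Gamma_n-\lambda)\cdot P_n$ directly; these are the same fact.
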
 

\begin{proof}
We give the proof of (1) and (2). The other statements are proven in a similar way. If $\widehat{D}\in \mathcal{F}_R(\widehat P)$, then there exists $\widehat M$ such
that $\widehat{M} \cdot \widehat P_n(x) = \widehat P_n(x) \cdot \widehat D$. Therefore
\begin{multline*}
P_n(x)\cdot A \widehat D B  = \widehat P_n(x) \cdot \widehat D B  
= \widehat M \cdot \widehat P_n(x) \cdot B  = \widehat M \cdot P_n(x) \cdot AB \\
= \widehat M \cdot P_n(x) \cdot (T_0-\lambda) = \widehat M (\Gamma_n-\lambda) \cdot P_n(x),
\end{multline*}
and $A\widehat D B \in \mathcal{F}_R(P).$ The proof of the second statement is similar
\begin{multline*}
    \widehat P_n(x)\cdot B D A  = P_n(x) \cdot ABDA  
    = P_n(x) \cdot (T_0-\lambda)DA  = (\Gamma_n-\lambda) \cdot P_n(x) \cdot DA \\
    = (\Gamma_n-\lambda)M \cdot P_n(x) A = (\Gamma_n-\lambda)M \cdot \widehat P_n(x).
\end{multline*}
and $BDA\in \mathcal{F}_R(\widehat P)$. This completes the proof of (2).
\end{proof}
By Proposition \ref{prop:maps-fourier}, we have maps $\xi:\mathcal{F}_R(P) \to  \mathcal{F}_R(\widehat P)$ 
and $\widehat \xi: \mathcal{F}_R(\widehat P) \to \mathcal{F}_R(P)$ given by
\begin{equation}
\label{eq:maps-xi}
    \xi(D) = BDA, \qquad \widehat \xi(\widehat D)  = A\widehat{D}B.
\end{equation}
Similarly, we have maps $\chi:\mathcal{F}_L(P) \to  \mathcal{F}_L(\widehat P)$ 
and $\widehat \chi: \mathcal{F}_L(\widehat P) \to \mathcal{F}_L(P)$ given by
\begin{equation}
\label{eq:maps-chi}
    \chi(M) = (\Gamma_n-\la)M, \qquad \widehat \chi(\widehat M)  = \widehat{M}(\Gamma_n-\la)
\end{equation}
We observe that $\xi, \widehat \xi, \chi, \widehat \chi$  are linear maps and are not algebra homomorphisms in general.

\begin{rmk}
We observe that for all $D\in \mathcal{F}_R(P)$ and $\widehat D\in \mathcal{F}_R(\widehat P)$ we have
$$(\widehat\xi \circ \xi)(D) = (T_0-\lambda) D (T_0-\lambda), 
\qquad (\xi\circ \widehat \xi)(\widehat D)  = (T_1-\lambda) \widehat D (T_1-\lambda).$$
Similarly, for all $M\in \mathcal{F}_L(P)$ and $\widehat M\in \mathcal{F}_L(\widehat P)$ we have
$$ (\widehat{\chi}\circ \chi)(M)=(\Gamma_n-\la)M(\Gamma_n-\la) \qquad (\chi\circ \widehat{\chi})(\widehat{M})=(\Gamma_n-\la)\widehat{M}(\Gamma_n-\la) $$
\end{rmk}
\begin{thm}
\label{thm:FourierAlgebrasDiagram}
Let $(P_n)_n$ be the sequence of monic orthogonal polynomials with respect to $W$, $(\widehat P_n)_n$ be a sequence of exceptional polynomials associated to $(T_0,\phi,\Upsilon)$ and let $\xi, \widehat \xi, \chi, \widehat \chi$ be the maps given in \eqref{eq:maps-xi}, \eqref{eq:maps-chi}. Then then following diagrams are commutative:
$$\begin{tikzcd}
	\mathcal{F}_L(P) 
	\arrow[r, "\psi"] 
	\arrow[d, "\chi"]
	& \mathcal{F}_R(P) 
	\arrow[d, "\xi"] \\
	\mathcal{F}_L(\widehat P) 
	\arrow[r, "\widehat \psi"]
	&  \mathcal{F}_R(\widehat P)
\end{tikzcd}, \qquad 
\begin{tikzcd}
	\mathcal{F}_L(P) 
	\arrow[r, "\psi"] 
	& \mathcal{F}_R(P) \\
	\mathcal{F}_L(\widehat P) 
     \arrow[u, "\widehat{\chi}"]
	\arrow[r, "\widehat \psi"]
	&  \mathcal{F}_R(\widehat P)
    \arrow[u, "\widehat{\xi}"]
\end{tikzcd}$$
\end{thm}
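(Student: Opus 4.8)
The plan is to verify each of the two squares by evaluating the appropriate intertwining relation on the polynomials $P_n$, respectively $\widehat P_n$, and then to upgrade the resulting polynomial identities to identities of operators using the uniqueness statements of Proposition~\ref{prop:psi-well-defined} (and the Casper--Yakimov well-definedness of $\psi$). The only ingredients needed are $AB=T_0-\lambda$ from Lemma~\ref{lem:TO=AB+la}, the relation $\widehat P_n=P_n\cdot A$, the eigenvalue equations $P_n\cdot T_0=\Gamma_n\cdot P_n$ and $\widehat P_n\cdot T_1=\Gamma_n\cdot\widehat P_n$, the four containments of Proposition~\ref{prop:maps-fourier}, and the standing hypothesis that $\Gamma_n-\lambda$ is invertible for all $n$, which is what makes $\psi,\widehat\psi$ isomorphisms and $\chi,\widehat\chi$ well defined. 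In fact most of the computation has already been carried out inside the proof of Proposition~\ref{prop:maps-fourier}; the role of the theorem is just to reorganise it as a compatibility of $\psi$ with $\widehat\psi$.

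For the left-hand square, fix $M\in\mathcal{F}_L(P)$ and set $D=\psi(M)$, so $M\cdot P_n=P_n\cdot D$. By Proposition~\ref{prop:maps-fourier}(4) and~(2) we have $\chi(M)=(\Gamma_n-\lambda)M\in\mathcal{F}_L(\widehat P)$ and $\xi(D)=BDA\in\mathcal{F}_R(\widehat P)$. Then
\[
\widehat P_n\cdot BDA = P_n\cdot ABDA = P_n\cdot(T_0-\lambda)DA = (\Gamma_n-\lambda)\cdot P_n\cdot DA = (\Gamma_n-\lambda)M\cdot P_n\cdot A = (\Gamma_n-\lambda)M\cdot\widehat P_n,
\]
using $AB=T_0-\lambda$, then the eigenvalue equation, then $P_n\cdot D=M\cdot P_n$, then $P_n\cdot A=\widehat P_n$. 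Hence $\chi(M)\cdot\widehat P_n=\widehat P_n\cdot\xi(D)$, and since by Proposition~\ref{prop:psi-well-defined} the operator $\widehat\psi(\chi(M))$ is the unique element of $\mathcal{F}_R(\widehat P)$ with this property, $\widehat\psi(\chi(M))=\xi(D)=\xi(\psi(M))$. As $M$ was arbitrary, $\widehat\psi\circ\chi=\xi\circ\psi$.

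For the right-hand square, fix $\widehat M\in\mathcal{F}_L(\widehat P)$ and set $\widehat D=\widehat\psi(\widehat M)$, so $\widehat M\cdot\widehat P_n=\widehat P_n\cdot\widehat D$. By Proposition~\ref{prop:maps-fourier}(3) and~(1), $\widehat\chi(\widehat M)=\widehat M(\Gamma_n-\lambda)\in\mathcal{F}_L(P)$ and $\widehat\xi(\widehat D)=A\widehat D B\in\mathcal{F}_R(P)$. Then
\[
P_n\cdot A\widehat D B = \widehat P_n\cdot\widehat D B = \widehat M\cdot\widehat P_n\cdot B = \widehat M\cdot P_n\cdot AB = \widehat M\cdot P_n\cdot(T_0-\lambda) = \widehat M(\Gamma_n-\lambda)\cdot P_n,
\]
using $\widehat P_n=P_n\cdot A$, then $\widehat P_n\cdot\widehat D=\widehat M\cdot\widehat P_n$, then $\widehat P_n\cdot B=P_n\cdot AB$, and finally $AB=T_0-\lambda$ with the eigenvalue equation. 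Thus $\widehat\chi(\widehat M)\cdot P_n=P_n\cdot\widehat\xi(\widehat D)$, and by injectivity of the right action of $\mathcal{M}_N$ on $(P_n)_n$ (i.e.\ \cite[Lem.~1]{DeanER}, as used in the proof of Proposition~\ref{prop:psi-well-defined}), $\psi(\widehat\chi(\widehat M))=\widehat\xi(\widehat D)=\widehat\xi(\widehat\psi(\widehat M))$, so $\psi\circ\widehat\chi=\widehat\xi\circ\widehat\psi$.

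The only point that really needs care — and the step most likely to hide an error — is the bookkeeping between the right action of the differential operators in $\mathcal{M}_N$ and the left action of the difference operators in $\mathcal{N}_N$: one must check that in each chain above the matrix $\Gamma_n-\lambda$, regarded as a difference operator of order zero, composes with $M$ (resp.\ $\widehat M$) exactly as written, and that the identity $(\widehat M\cdot Q_n)\cdot\mathcal{D}=\widehat M\cdot(Q_n\cdot\mathcal{D})$ is invoked only through the two-sided action $M\cdot Q_n\cdot\mathcal{D}$ recalled in Section~\ref{sec:preliminaries}. Once this is handled there is no further obstacle, since the factorization $T_0=AB+\lambda$, the eigenvalue relations, and the containments of Proposition~\ref{prop:maps-fourier} are already available.
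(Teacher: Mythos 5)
Your proof is correct and follows essentially the same route as the paper: both verify the two squares by the identical chain of identities built from $\widehat P_n=P_n\cdot A$, $AB=T_0-\lambda$, the eigenvalue equations, and the containments of Proposition~\ref{prop:maps-fourier}, then conclude via the uniqueness statements behind $\psi$ and $\widehat\psi$. The only cosmetic difference is that you write each chain in the reverse direction and are slightly more explicit about invoking Proposition~\ref{prop:psi-well-defined} and \cite[Lem.~1]{DeanER} at the final step.
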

\begin{proof}
Let $M\in\mathcal{F}_L(P)$ and $D\in\mathcal{F}_R(P)$ the corresponding differential operator. Observe that
\begin{multline*}
    \chi(M)\cdot \widehat{P}_n=(\Gamma_n-\lambda)M\cdot \widehat{P}_n=(\Gamma_n-\lambda)M\cdot P_n\cdot A=(\Gamma_n-\lambda)\cdot P_n \cdot D A
    \\=P_n \cdot(T_0-\lambda)DA =P_n\cdot ABDA=\widehat{P}_n\cdot BDA= \widehat{P}_n\cdot \xi(\psi(M)).
\end{multline*}
We conclude $\widehat{\psi}(\chi(M))=\xi(\psi(M)),$ and the first diagram is commutative. 

Similarly, let $\widehat{M}\in\mathcal{F}_L(\widehat{P})$ and $\widehat{D}\in\mathcal{F}_R(\widehat{P})$ the corresponding differential operator
\begin{multline*}
    \widehat{\chi}(\widehat{M})\cdot P_n=\widehat{M}(\Gamma_n-\lambda)\cdot P_n=\widehat{M}\cdot P_n\cdot (T_0-\lambda)=\widehat{M}\cdot P_n\cdot AB=\widehat{M}\cdot \widehat{P}_n\cdot B
    \\=\widehat{P}_n \cdot\widehat{D}B =P_n\cdot A\widehat{D}B= P_n\cdot \widehat{\xi}(\widehat{\psi}(\widehat{M})).
\end{multline*}
We conclude $\psi(\widehat{\chi}(\widehat{M}))=\widehat{\xi}(\widehat{\psi}(\widehat{M})),$ and the second diagram is commutative. This completes the proof of the theorem.
\end{proof}

\begin{example}
\label{ex:laguerre-escalar}
\label{example:clasical-laguerre}
The classical Laguerre operator is given by
\begin{equation}
\label{eq:scalar-laguerre-op}
\mathcal{L}^{(\alpha)} = x\frac{d^2}{dx^2} + (\alpha+1-x)\frac{d}{dx}.    
\end{equation}
Seed functions having a polynomial part and eigenvalues
of the Laguerre operator are known, see e.g. \cite[\S 6.1]{Erdelyi}
\begin{enumerate}
   
     \item $f_1(x)=x^{-\alpha}L_m^{(-\alpha)}(x),\quad$ eigenvalue: $\alpha-m,$
      \item $f_2(x)=e^xL_m^{(\alpha)}(-x),\quad$ eigenvalue: $\alpha+1+m,$
       \item $f_3(x)=L_m^{(\alpha)}(x),\quad$ eigenvalue: $-m,$
       \item $f_4(x)=x^{-\alpha} e^x L_m^{(-\alpha)}(-x),\quad$ eigenvalue: $m+1,$
\end{enumerate}
where $L^{(\alpha)}_m(x)$ denotes the scalar Laguerre polynomial of degree $m$ and parameter $\alpha$, see e.g. \cite{AndrAR},\cite{Isma}, \cite{KoekLS}, \cite{KoekS},
\begin{equation}\label{eq:scalar-laguerre}
L_n^{(\alpha)}(x)=\frac{(\alpha+1)_n}{n!}\rFs{1}{1}{-n}{\alpha+1}{x}.
\end{equation}
We fix $m\in \N_0$ and take $f_1(x)$ as seed function. Theorem \ref{thm:ortogonalidad-excep} gives a sequence of exceptional Laguerre polynomials $(\widehat p_n^{(\alpha)})_n$. If we apply the Theorem \ref{thm:FourierAlgebrasDiagram} to the operator $x\in \mathcal{F}_R(P)$ and $\mathcal{L} = \psi^{-1}(x)$ we get
\begin{equation}
\label{eq:D-M-hat}
\widehat{P}_n\cdot BxA=(\Gamma_n-\lambda)\mathcal{L} \cdot\widehat{P}_n.
\end{equation} 
The operator $(-n-\alpha+m)\mathcal{L}$ is a three term recurrence relation
\begin{multline*}
(-n-\nu-1+m)\mathcal{L}=(-n-\alpha-1+m)\delta + (-n-\alpha+m)(2n+\alpha+1)\\+(-n-\alpha+1+m)n(n+\alpha)\delta^{-1}.
\end{multline*}
Let $(q_n)_n$ be a sequence of polynomials defined by this three term recurrence relation. After scaling and normalization, we get that these polynomials are
$$q_n(x^2)=\frac{(-1)^n}{(\alpha+1-m)_n}S_n(x^2+a^2;a,b,c),$$
where $S_n$ denotes the continuous dual Hahn polynomials and $c=\frac{\alpha}{2}+1,$ $a=\frac{\alpha}{2},$ $ b=\frac{\alpha}{2}-m.$
For $\alpha>m,$ the set $(q_n)_n$ forms a family of orthogonal polynomials, see \cite{KoekLS},\,\cite{KoekS}. If we change the seed function, we can proceed similarly and get continuous dual Hahn polynomials, but with a change in the parameters.

In \cite[(5-14)]{Koornwider85} it is shown that the Whittaker transform maps Laguerre polynomials to continuous dual Hahn polynomials. This observation states that there is a transform mapping exceptional Laguerre polynomials to continuous dual Hahn polynomials, and this can be derived directly from \cite[(5-14)]{Koornwider85}. 
\end{example}


\part{Matrix exceptional Laguerre polynomials}\label{part:matriXOL}

In the remaining part of the paper we apply the general theory of the previous sections 
to a special case, namely to the matrix exceptional Laguerre polynomials. These matrix exceptional polynomials arise from the matrix Laguerre polynomials
introduced and studied in \cite{KoelR}.


\section{Matrix Laguerre weight and differential operator}\label{sec:MLaguerreweightT0}
In this section we introduce the required input from the matrix Laguerre polynomials introduced in \cite{KoelR}. 
Let $N\geq1$ be a fixed integer, $(\mu_1,\dots,\mu_N)$ a sequence of non-zero coefficients, $\alpha,\nu>0$ and $(\delta_1^{(\nu)},\dots,\delta_N^{(\nu)})$ a sequence of positive numbers. Let us consider the following $N\times N$ Laguerre type weight matrix introduced on $(a,b)=(0,\infty)$ in \cite{KoelR}
\begin{equation}\label{eq:defWeightMLaguerrepols}
\begin{split}
&\qquad\qquad  W_{\mu}^{(\alpha,\nu)}(x)\,=\,L_{\mu}^{(\alpha)}(x)T^{(\nu)}(x)L_{\mu}^{(\alpha)}(x)^\ast, \\  
&T^{(\nu)}(x)_{i,j}= \de_{i,j} e^{-x}x^{\nu+i}\delta_{i}^{(\nu)}, 
\qquad L_{\mu}^{(\alpha)}(x)_{i,j}\,=\, 
\begin{cases}    \frac{\mu_i}{\mu_j}L_{i-j}^{(\alpha+j)}(x) &  i\geq j \\ 0 &  i<j\end{cases}
\end{split}
\end{equation}
where $L_n^{(\al)}$ denotes the scalar Laguerre polynomial of degree $n$ and parameter $\al$, see \eqref{eq:scalar-laguerre}. We consider the following second order symmetric differential operator 
\begin{equation}
\label{eq:opT0}
T_0=\frac{d^2}{dx^x}x+\frac{d}{dx}(M^{(\alpha,\nu)}_1x+M^{(\alpha,\nu)}_2)+C^{(\alpha,\nu)},
\end{equation}
with 
\begin{gather*}
M_1^{(\alpha,\nu)}=-(A_\mu+1)^{-1}, \quad M_2^{(\alpha,\nu)}=\nu+J+1+(\alpha+J)A_\mu, \\
C^{(\alpha,\nu)}=(\alpha-\nu)(A_\mu+1)^{-1}-J,
\end{gather*}
where $J_{i,j}=\de_{i,j} i,$ is a diagonal matrix and $(A_{\mu})_{i,j}=- \de_{j,i-1}\frac{\mu_i}{\mu_{i-1}}$ is a lower triangular matrix. Recall that we take $T_0$ as a second-order matrix differential operator acting from the right and  the 
coefficients of $T_0$ are polynomial functions. 
$T_0$ is symmetric with respect to the matrix 
weight $W_{\mu}^{(\alpha,\nu)}$, see \cite[Prop.~4.3]{KoelR}, when considered 
on the space of polynomials. In Proposition \ref{prop:T0essentiallyselfadjoint} we
show that $T_0$ is essentially self-adjoint, and we recall the corresponding monic
matrix orthogonal polynomials of \cite[Prop.~4.3]{KoelR} in 
\eqref{eq:MVorthogonalpolsW}, \eqref{eq:eigenvaluematrixLaguerrepols}.
In order to apply the results of Appendix \ref{sec:appA:MDEsingularities}, we need to take the adjoint of the action of $T_0$ on a row vector valued function $u$. So in order to solve the eigenvalue
equation $u(x)\cdot T_0 = \la \cdot u(x)$, we can apply the results of Appendix \ref{sec:appA:MDEsingularities} with $B_1(x) = x (M_1^{(\al,\nu)})^\ast + (M_2^{(\al,\nu)})^\ast$
and $B_2(x)=x((C^{(\al,\nu)})^\ast - \bar\la)$ which are polynomial of degree $1$ in $x$. In particular,
$B_1(0)=(M_2^{(\al,\nu)})^\ast$ is uppertriangular and $B_2(0)=0$.  So the indicial 
equation \eqref{eq:A-indicialeq} is 
\begin{equation}\label{eq:T0indicialeq}
\mu \mapsto \mu^N  \det(\mu- 1+ (M_2^{(\al,\nu)})^\ast) = \mu^N \prod_{j=1}^N (\mu+\nu+j)
\end{equation}
so the exponents are $\mu=0$ with multiplicity $N$ and $\mu=-\nu-j,$ $j\in\{1,\cdots,N\}$. 
The exponent $\mu=0$ and multiplicity $N$ corresponds to the  matrix 
orthogonal polynomials as eigenfunctions to $T_0$, and the other non-zero exponents reappear in Proposition \ref{prop:phim-seedfunction}. 

\begin{rmk}
\label{rmk:Laguerre-diag}
    The structure \eqref{eq:defWeightMLaguerrepols} of $W^{(\nu)}$ coincides with the condition \eqref{eq:weightstructure5} of Section \ref{sec:T0-diagonalizable}. On the other hand, the differential operator $T_0$ can be conjugated into a diagonal differential operator, see \cite[Prop. 4.3]{KoelR}. Now, the construction in Section 5 leads to a family of matrix valued exceptional Laguerre polynomials with singular leading coefficients. In Part 2, we follow Sections \ref{sec:2ndoperatorsfacDarbux}, \ref{sec:symmdiffopexceptweights} and \ref{sec:matriXOL} for a particular seed function, and we obtain families of matrix valued exceptional Laguerre polynomials with invertible leading coefficients.
\end{rmk}


\section{Seed functions for $T_0$}\label{sec:seedmatrixeigenfunctionsforT0}
According to Section \ref{ssec:scalar-eigenvalue}, a suitable seed function $\phi$ is a matrix eigenfunction for the operator $T_0$ with scalar eigenvalue $\la$, see \eqref{eq:seedfunctiongeneral}. In analogy with the scalar Laguerre case, the differential operator $T_0$ has four types of eigenfunctions with
scalar eigenvalue having a polynomial part:
\begin{enumerate}
    \item $G_1(x) = F(x)$,
    \item $G_2(x) = x^{-\nu-J}F(x)$, 
    \item $G_3(x) = e^x F(x)$,
    \item $G_4(x) = e^x x^{-\nu-J} F(x)$,
\end{enumerate}
where the $F(x)'s$ are specific lower triangular polynomials in each of the cases. The particular form of the seed function is indicated by the solutions of the indicial equation \eqref{eq:T0indicialeq}. In this paper we will only treat exceptional polynomials related with case (2). Case (3) can be treated in a similar way and cases (1) and (4) do not lead to exceptional weight matrices with finite moments.

In order to state the result, we introduce  a sequence of lower triangular matrix 
polynomials 
$(F_m(x))_{m\in\N_0}$ defined explicitly in terms of hypergeometric functions by
\begin{equation}\label{eq:functions-Fm}
(F_m(x))_{i,j}= \frac{(-1)^{i-j-1}(-i)_j(-\alpha-i)_{i-j}\mu_1}{i(\alpha+2)_{i-1}\mu_j}\rFs{2}{2}{-m,-\alpha-j}{1-\nu-i,-\alpha-i}{x}
\end{equation}
and we assume that the ${}_2F_2$-series is well-defined. Since we assume $\al,\nu>0$ it suffices to assume that neither $\al$ nor $\nu$ is an integer less than $m$, and we assume this for the rest of the paper.

\begin{rmk}\label{rmk:functions-Fm} 
(i) Since $(-i)_j=0$ for $j>i$, we see that $F_m$ is a lower triangular matrix polynomial of degree $m$. Indeed, each of its non-zero entries is a polynomial of degree $m$. Hence, the leading coefficient of $F_m(x)$ is a full lower triangular matrix.
\smallskip

\noindent (ii) The diagonal elements of $F_m$ are multiples of scalar Laguerre polynomials and more generally we have 
for $i\geq j$
\begin{align*}
(F_m(x))_{i,j}= 
\frac{(-1)^{i-j-1}(-i)_j(-\alpha-i)_{i-j}\mu_1}{i(\alpha+2)_{i-1}\mu_j}
\frac{m!}{(1-\nu-i)_m}
\sum_{p=0}^{m\wedge i-j} \frac{(j-i)_p \, x^p}{p!\, (-\al-i)_p}
L^{(p-\nu-i)}_{m-p}(x).
\end{align*}
Here we use the general expansion \cite[(15), p.~439]{PrudBM}, except that $z^k$ is missing in the summand on the right hand side of \cite[(15), p.~439]{PrudBM}. 
The proof of the identity \cite[(15), p.~439]{PrudBM} follows by using 
the Chu-Vandermonde sum $\frac{(-\al-j)_k}{(-\al-i)_k} = {}_2F_1(-k, j-i; -\al-i;1)$ 
in the expansion \eqref{eq:functions-Fm}, interchanging summations and writing the 
terminating ${}_1F_1$ in the summand as a Laguerre polynomial.

\end{rmk}

\begin{prop}\label{prop:phim-seedfunction}
The functions $\phi_m(x)=x^{-\nu-J}F_m(x)$ are seed functions for $T_0$ with scalar eigenvalue $(\alpha-m)$ for all $m\in\N_0$. 
\end{prop}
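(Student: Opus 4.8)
The plan is a direct verification in three steps, arranged so that the matrix computation collapses to scalar (hypergeometric) identities. \emph{First, normalise the $\mu_i$'s.} Conjugating by the constant diagonal matrix $S=\mathrm{diag}(\mu_1,\dots,\mu_N)$ --- that is, replacing $T_0$ by the operator whose coefficient matrices are $S^{-1}F_jS$ and $\phi_m$ by $\phi_m S$ --- turns $A_\mu$ into the standard nilpotent matrix $(A_1)_{i,j}=-\delta_{j,i-1}$ and fixes $J$, hence turns $M_1^{(\alpha,\nu)},M_2^{(\alpha,\nu)},C^{(\alpha,\nu)}$ into the corresponding matrices with all $\mu_i=1$; and by \eqref{eq:functions-Fm} it turns $\phi_m$ into $\mu_1$ times the analogous seed function with all $\mu_i=1$. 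As conjugation by a constant matrix carries eigenfunctions to eigenfunctions with the same eigenvalue, and a nonzero scalar multiple of a seed function is again a seed function (cf. Remark \ref{rmk:Mphi}), it suffices to prove the proposition when $\mu_i=1$ for all $i$, in which case $\bigl((A_\mu+1)^{-1}\bigr)_{i,j}=1$ for $i\ge j$, keeping the later bookkeeping light.

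\emph{Second, reduce to a polynomial identity.} Substitute $\phi_m=x^{-\nu-J}F_m$ into $\phi_m\cdot T_0=(\alpha-m)\phi_m$. Using $(x^{-\nu-J})'=-(\nu+J)x^{-\nu-J-1}$, the Leibniz rule, and the commutativity of the diagonal matrix $x^{-\nu-J}$ with $x$ and with $\nu+J$, every term of $\phi_m\cdot T_0$ acquires the common left factor $x^{-\nu-J}$; cancelling it (it is invertible on $(0,\infty)$) and multiplying by $x$ shows that the eigenvalue equation is equivalent to the polynomial matrix identity
\begin{multline}\label{eq:plan-reduced}
x^2\bigl(F_m''+F_m'M_1\bigr)+x\bigl(F_m'M_2-2(\nu+J)F_m'-(\nu+J)F_mM_1+F_mC-(\alpha-m)F_m\bigr)\\
{}+(\nu+J)(\nu+J+1)F_m-(\nu+J)F_mM_2=0,
\end{multline}
where $M_i=M_i^{(\alpha,\nu)}$ and $C=C^{(\alpha,\nu)}$.

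\emph{Third, verify \eqref{eq:plan-reduced} entrywise.} Since $M_1,M_2,C$ and $F_m$ are lower triangular, the $(i,j)$-entry of \eqref{eq:plan-reduced} involves only the entries $(F_m)_{i,k}$ with $j\le k\le i$, so for each fixed $i$ one can proceed in the order $j=i,i-1,\dots,1$. For $j=i$ all off-diagonal couplings drop out and, using $(M_1)_{ii}=-1$, $(M_2)_{ii}=\nu+i+1$, $(C)_{ii}=\alpha-\nu-i$, \eqref{eq:plan-reduced} collapses to $x\,g''+(1-\nu-i-x)\,g'+m\,g=0$ for $g=(F_m)_{ii}$, i.e. $\mathcal{L}^{(-\nu-i)}g=-m\,g$ in the notation of \eqref{eq:scalar-laguerre-op}; this holds because $(F_m)_{ii}$ is a scalar multiple of $L_m^{(-\nu-i)}$ by Remark \ref{rmk:functions-Fm}(ii) with $j=i$. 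For $j<i$ one inserts the explicit formula \eqref{eq:functions-Fm} (the standing assumption that neither $\alpha$ nor $\nu$ is an integer less than $m$ guarantees all the $\,{}_2F_2$'s are well defined): the Pochhammer prefactors (now free of the $\mu_i$) cancel, and what remains is a scalar identity among the functions $\rFs{2}{2}{-m,-\alpha-k}{1-\nu-i,-\alpha-i}{x}$ with $j\le k\le i$ and their first and second derivatives, which follows from the $\,{}_2F_2$ differential equation together with contiguous relations in the numerator parameter $-\alpha-k$, or equivalently --- via Remark \ref{rmk:functions-Fm}(ii) --- from the classical recurrence and structure relations for the scalar Laguerre polynomials. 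This off-diagonal step is where all the content lies, and it is the main obstacle: one must keep track of the interaction between the derivative terms, the subdiagonal part $(\alpha+J)A_\mu$ of $M_2$, the full lower-triangular matrices $M_1$ and $C$, and the shift $k\mapsto k+1$ of the numerator parameter in \eqref{eq:functions-Fm}. An alternative that avoids guessing the correct contiguous relations is to build $F_m$ rather than check it: run the Frobenius recursion at the regular singular point $x=0$ of the adjoint system of Section \ref{sec:MLaguerreweightT0} along the exponent $-\nu-i$ in the $i$-th slot (cf. \eqref{eq:T0indicialeq} and Appendix \ref{sec:appA:MDEsingularities}), note that the coefficient recursion terminates precisely when the eigenvalue equals $\alpha-m$, and identify the resulting finite sum with \eqref{eq:functions-Fm}; this trades the hypergeometric bookkeeping for the analysis of resonances, since the exponents $-\nu-1,\dots,-\nu-N$ differ by integers.
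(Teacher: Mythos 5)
Your overall strategy coincides with the paper's: strip off the left factor $x^{-\nu-J}$ (justified by the indicial equation / direct computation), observe that the resulting system for $F_m$ is lower triangular, solve the diagonal entries via the scalar Laguerre equation $xg''+(1-\nu-i-x)g'+mg=0$, and then verify the off-diagonal entries against the explicit formula \eqref{eq:functions-Fm}. Your reduced identity \eqref{eq:plan-reduced} is exactly the paper's equation \eqref{eq:seedfunctiondDEF} multiplied by $x$, and your diagonal computation is correct. The preliminary normalisation of the $\mu_i$'s by conjugation with $\mathrm{diag}(\mu_1,\dots,\mu_N)$ is a sensible simplification the paper does not make, and it is harmless.

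The gap is in your third step, and you flag it yourself: for $j<i$ you assert that the required identity among the functions $\rFs{2}{2}{-m,-\alpha-k}{1-\nu-i,-\alpha-i}{x}$, $j\le k\le i$, ``follows from the ${}_2F_2$ differential equation together with contiguous relations in the numerator parameter,'' but you neither identify which contiguous relations are needed nor carry out the reduction. Since the proposition is precisely the claim that the \emph{explicit} matrix \eqref{eq:functions-Fm} solves the system, this off-diagonal verification is the entire content, and leaving it as an unspecified hypergeometric identity does not constitute a proof. The paper closes this step concretely: it expands $F_m(x)=\sum_k F_kx^k$ with coefficients $c_{i,j,k}$ as in \eqref{eq:valueFk-cijk}, derives the three-term (in $k$) and band-limited (in $j$) recursion \eqref{eq:recursionFks} satisfied by the power-series coefficients after multiplying the equation on the right by $(1+A_\mu)$ to clear the inverses, checks the apparent-singularity condition $(\nu+J+1)F(0)=F(0)M_2^{(\al,\nu)}$ at the $x^{-1}$ coefficient, and verifies entry by entry that the explicit $c_{i,j,k}$ annihilate the recursion. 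Your proposed ``alternative'' route (run the Frobenius recursion at $x=0$ and observe termination at $\la=\al-m$) is in fact essentially what the paper does, so if you develop that branch --- writing down the coefficient recursion explicitly and checking \eqref{eq:valueFk-cijk} against it, including the resonance/apparent-singularity check you correctly anticipate --- your argument becomes complete and matches the paper's.
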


The row vectors of $\phi_m$ then correspond to solutions for the non-zero exponents for the indicial equation \eqref{eq:T0indicialeq}. Note that we can multiply by a constant diagonal matrix on the left. Note that no row $\phi$ is contained in 
$L^2(W_{\mu}^{(\alpha,\nu)})$.
\begin{proof} 
Since the powers in $x^{-\nu-J}$ correspond to the solutions of the indicial equation 
\eqref{eq:T0indicialeq}, we know that $x^{-\nu-J}F(x)$ with $F$ analytic in 
a neighbourhood of the origin is a solution to $\phi\, T_0=\la\, \phi$. Conjugating  $T_0$ with $x^{-\nu-J}$ from the left, we see that $F$ has to satisfy the mixed relation
\begin{equation}\label{eq:seedfunctiondDEF}
\begin{split}
&F''(x) x + F'(x)(xM_1^{(\al,\nu)}+M_2^{(\al,\nu)}) -2(\nu+J)F'(x) 
+ (\nu+J)(\nu+J+1)\frac{1}{x} F(x) \\ 
&-(\nu+J) \frac{1}{x} F(x)(xM_1^{(\al,\nu)}+M_2^{(\al,\nu)}) +F(x) (C^{(\al,\nu)}-\la) 
= 0.
\end{split}
\end{equation}
Since $F$ has to be analytic, the singularity related to $\frac{1}{x}$ in 
\eqref{eq:seedfunctiondDEF} is apparent. So we have $(\nu+J+1)F(0)=F(0)M_2^{(\al,\nu)}$. 
Moreover, since all the matrices acting from the right on $F$ in \eqref{eq:seedfunctiondDEF} are lower 
triangular, we have a lower triangular solution $F$. Then for $g=F_{i,i}$ we get the 
well-known differential equation
\begin{equation*}
xg''(x) + (-x+1-\nu-i)g'(x) + (\al-\la)g(x)=0,
\end{equation*}
which is the Laguerre differential equation. Since we want a polynomial solution, we 
choose $\la = \al-m$, and then $F_{i,i}$ is a constant times the Laguerre 
polynomial $L_m^{(-\nu-i)}$ and we choose the multiple as in Remark 
\ref{rmk:functions-Fm}. 
Now for the analytic functions $F_{i,j}(x)$, $i>j$, we get an inhomogeneous differential equation involving $F_{i,k}(x)$, $j<k\leq i$. The analytic solution is completely determined by $F_{i,j}(0)$, which in turn follows from $F_{i,j+1}(0)$ since
$(\nu+J+1)F(0)=F(0)M_2^{(\al,\nu)}$. We put 
$F_{i,j}(x) = \sum_{k=0}^\infty c_{i,j,k}x^k$. Then for $i>j$ we have 
\[
(\nu+i+1) c_{i,j,0} = (\nu+j+1) c_{i,j,0} - (\al+j+1)\frac{\mu_{j+1}}{\mu_j} c_{i,j+1,0}
\]
and $c_{i,j,0} = - \frac{\al+j+1}{i-j}\frac{\mu_{j+1}}{\mu_j}c_{i,j+1,0}$ is indeed
satisfied by \eqref{eq:functions-Fm}. 

It remains to show that with $F(x)=\sum_{k=0}^\infty F_k x^k$, 
\begin{equation}\label{eq:valueFk-cijk}
F_k= (c_{i,j,k})_{i,j=1}^N, \qquad c_{i,j,k}=\frac{(-1)^{i-j-1}(-i)_j(-\alpha-i)_{i-j}\mu_1}{i(\alpha+2)_{i-1}\mu_j} \frac{(-m)_k(-\alpha-j)_k}{(1-\nu-i)_k(-\alpha-i)_k k!}
\end{equation}
we obtain a solution to \eqref{eq:seedfunctiondDEF} for $\la =\al-m$. In order to have simpler recursions, we multiply \eqref{eq:seedfunctiondDEF} on the right with $(1+A_\mu)$ to cancel the inverses $(1+A_\mu)^{-1}$ in $M_1^{(\al,\nu)}$ and $C^{(\al,\nu)}$. 
Plugging the power series expression for $F$ in the resulting matrix differential
equations gives a power series identity. We have already verified that the coefficient of $x^{-1}$ is indeed equal to $0$. The coefficient of $x^k$, $k\in \N_0$, equals, after a straightforward calculation, 
\begin{align}
\label{eq:recursionFks}
&k(k+1) F_{k+1}(1+A) + (k+1)F_{k+1} M_3 - (k+1)(\nu+J)F_{k+1}(1+A) \\
&\ + (\nu+J)(\nu+J+1)F_{k+1}(1+A) - (\nu+J)F_{k+1} M_3 -kF_k + (\nu+J)F_k 
+ F_k M_4 \nonumber
\end{align}
where $A= A_\mu$, $M_3=(\nu+J+1)(1+A) + (\al+J)A(1+A)$, and $M_4=(m-\nu) + 
(m-\al)A - J(1+A)$. Note that the matrices acting on the right in \eqref{eq:recursionFks} are lower triangular and band limited, and the ones acting on the left are diagonal. 
Calculating the $(i,j)$-th entry of \eqref{eq:recursionFks} gives terms
of the form $c_{i,j+r,k+s}$ for $r\in\{0,1,2\}$, $s\in \{0,1\}$. An explicit
calculation gives that $(i,j)$-th entry of \eqref{eq:recursionFks} equals
\begin{align*}
&c_{i,j,k+1}\bigl( (k+1)(k-\nu+1+j-2i)+(\nu+i)(i-j)\bigr) \\
+ &c_{i,j+1,k+1} \frac{-\mu_{j+1}}{\mu_j} \bigl( (k+1)(k-\nu+1+2(j-i)+\al) + (\nu+i)(i-2j-\al)\bigr) \\ +
&c_{i,j+1,k+1} \frac{\mu_{j+2}}{\mu_j} (\al+j)(k+1-\nu-i) 
+ c_{i,j,k}(m-k+i-j) + c_{i,j+1,k} \frac{-\mu_{j+1}}{\mu_j}(m-\al-j).
\end{align*}
It is now a straightforward calculation that for the value of $c_{i,j,k}$ as  
in \eqref{eq:valueFk-cijk} this yields zero. 
\end{proof}

Note that the indicial equation \eqref{eq:T0indicialeq} 
predicts the form of the solution, but this does not explain that the eigenvalue
is indeed constant and that the remaining analytic term $F$ is actually polynomial. 

\begin{rmk}
As it has been pointed out in Remark \ref{rmk:Laguerre-diag}, the differential operator $T_0$ can be conjugated into a diagonal differential operator $T_0^d$, see \cite[Prop. 4.3]{KoelR}. The seed function $\phi_m$ on Proposition \ref{prop:phim-seedfunction} can be decomposed as:
$$\phi_m(x)=M\phi^d_m(x)L_{\mu}^{(\alpha)}(x)^{-1},$$
where $\phi_m^{d}(x)=\text{diag}(x^{-\nu-i}L_{m}^{(-\nu-i)}(x))$ is an eigenfunction of the diagonal operator $T_0^d$ and $M$ is a constant lower triangular matrix.
\end{rmk}


\section{Intertwiner relations for the matrix Laguerre operator}\label{sec:intertwingLaguerre}
In this section, we introduce an intertwinning operator $A_m$ for each seed function $\phi_m$. We show that $A_m$ preserves polynomials and the regularity of the leading coefficients. 

By Proposition  \ref{prop:phim-seedfunction} the matrix function $\phi_m(x)=x^{-\nu-J}F_m(x)$ is a seed function of the second order differential operator $T_0$ for the scalar eigenvalue $(\alpha-m)$. Recall that $F_m$ is a lower triangular matrix 
polynomial of degree $m$, so that 
\begin{equation}\label{eq:detFm}
\begin{split}
\det(F_m(x))&\,=\, \prod_{k=1}^N \frac{(-1)^{k+1}(k-1)!m!\mu_1}{(\alpha+2)_{k-1}(1-\nu-k)_m\mu_k}L_m^{(-\nu-k)}(x) \\
&\, =\, \prod_{k=1}^N \frac{(-1)^{k+1}(k-1)!\, \mu_1}{(\alpha+2)_{k-1}\, \mu_k}
\rFs{1}{1}{-m}{1-\nu-k}{x}
\end{split}
\end{equation}
and $\det(F_m)$ is a polynomial of degree $mN$.  
For $\nu>\max(0,m-1)$ we see that each of the ${}_1F_1$-series in 
\eqref{eq:detFm} has positive coefficients for each power of $x$, so that it follows 
that the zeros of $\det(F_m)$ are not contained in $[0,\infty)$. 

The seed function $\phi_m$ has a singularity at $x=0$, and 
is single valued on the cut complex plane $\C\setminus (-\infty,0]$. 
Note that $\phi_m$ is invertible on $\C\setminus (-\infty,0]$ minus the finite set of zeros of $\det(F_m)$. Then we have 
\begin{equation}\label{eq:phiminverseddxphim}
\phi_m(x)^{-1}\phi_m'(x)=-\frac{\nu}{x}-\frac{1}{x}F_m(x)^{-1}JF_m(x)+F_m(x)^{-1}F_m'(x).
\end{equation}
Note that this is a rational matrix function and that its poles are not contained in 
$(0,\infty)$ for $\nu>\max(0,m-1)$. 
We use the matrix polynomial $\Up_m(x)=x\det(F_m(x))\Id$ to cancel the 
singularities in \eqref{eq:phiminverseddxphim}.

\begin{lem}\label{lem:degreephiinvderphiUps}
$x\mapsto \phi_m(x)^{-1}\phi_m'(x)\Up_m(x)$ is a matrix polynomial of degree $mN$ with 
invertible leading coefficient. 
\end{lem}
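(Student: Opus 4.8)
The statement asserts two facts about $\psi_m(x) := \phi_m(x)^{-1}\phi_m'(x)\Up_m(x)$: that it is a matrix polynomial of degree exactly $mN$, and that its leading coefficient is invertible. The plan is to substitute the explicit formula \eqref{eq:phiminverseddxphim} for $\phi_m^{-1}\phi_m'$ together with $\Up_m(x) = x\det(F_m(x))\Id$ and track the three resulting terms separately. With this substitution,
\[
\psi_m(x) = -\nu \det(F_m(x))\Id - \det(F_m(x))F_m(x)^{-1}JF_m(x) + x\det(F_m(x))F_m(x)^{-1}F_m'(x).
\]
The first term is manifestly a polynomial of degree $mN$ (by \eqref{eq:detFm}). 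For the second and third terms I would use that $\det(F_m(x))F_m(x)^{-1} = \operatorname{adj}(F_m(x))$ is the adjugate matrix, hence a polynomial matrix whose entries have degree at most $(N-1)m$; therefore $\det(F_m(x))F_m(x)^{-1}JF_m(x)$ is polynomial of degree at most $(N-1)m + m = Nm$, and $x\det(F_m(x))F_m(x)^{-1}F_m'(x)$ is polynomial of degree at most $1 + (N-1)m + (m-1) = Nm$. This already shows $\psi_m$ is a polynomial of degree $\le mN$, so the singularities at $x=0$ are genuinely cancelled.

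The substantive part is identifying the leading coefficient and showing it is invertible. I would compute the coefficient of $x^{mN}$ in each of the three terms. Writing $F_m(x) = F_m^{(m)} x^m + \text{lower order}$, where by Remark \ref{rmk:functions-Fm}(i) the leading coefficient $F_m^{(m)}$ is a full (invertible) lower triangular matrix, the top-degree behaviour of $\psi_m$ is governed by replacing $F_m$ with $F_m^{(m)} x^m$ throughout. Then $\det(F_m(x)) \sim \det(F_m^{(m)}) x^{mN}$, and in the third term $F_m'(x) \sim m F_m^{(m)} x^{m-1}$, so $x \det(F_m(x)) F_m(x)^{-1}F_m'(x) \sim m\det(F_m^{(m)}) x^{mN}\Id$. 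The $x^{mN}$-coefficient of the second term is $\det(F_m^{(m)})(F_m^{(m)})^{-1}J F_m^{(m)}$, which is $\det(F_m^{(m)})$ times a matrix conjugate to $J$. Collecting, the leading coefficient of $\psi_m$ is
\[
\det(F_m^{(m)})\Bigl( (m-\nu)\Id - (F_m^{(m)})^{-1}J F_m^{(m)} \Bigr).
\]
Since $(F_m^{(m)})^{-1}J F_m^{(m)}$ has the same eigenvalues as $J = \operatorname{diag}(1,\dots,N)$, the bracketed matrix has eigenvalues $m - \nu - i$ for $i = 1,\dots,N$, all nonzero under the standing hypothesis $\nu > \max(0, m-1)$ (equivalently, since $\nu$ is not an integer $\le m$, none of these vanish). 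Hence the bracket is invertible, $\det(F_m^{(m)}) \ne 0$ because $F_m^{(m)}$ is a triangular matrix with nonzero diagonal (visible from \eqref{eq:functions-Fm}), and so the leading coefficient of $\psi_m$ is invertible; in particular it is nonzero, so the degree is exactly $mN$.

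The main obstacle I anticipate is bookkeeping the degree estimates carefully enough to be sure no cancellation drops the degree below $mN$ — this is precisely why one must exhibit the explicit nonzero leading coefficient rather than merely bounding degrees. The key simplification that makes this tractable is recognizing $\det(F_m) F_m^{-1}$ as the adjugate (a polynomial matrix), which sidesteps any analysis near the zeros of $\det(F_m)$, and the observation that conjugation preserves the spectrum of $J$, which reduces the invertibility question to the already-assumed non-integrality of $\nu$.
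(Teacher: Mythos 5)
Your proof is correct and follows essentially the same route as the paper: both substitute \eqref{eq:phiminverseddxphim}, recognise $\det(F_m)F_m^{-1}=\adj(F_m)$ as a polynomial matrix of entrywise degree $(N-1)m$ to obtain the degree bound $mN$, and then exhibit the leading coefficient explicitly. The only divergence is the final step: the paper uses that all three terms are lower triangular and checks that each diagonal entry has exact degree $mN$, whereas you compute the full leading coefficient $\det(F_m^{(m)})\bigl((m-\nu)\Id-(F_m^{(m)})^{-1}JF_m^{(m)}\bigr)$ and invoke similarity to $J$ --- both reductions come down to the non-vanishing of $m-\nu-i$ for $i=1,\dots,N$, which the standing assumptions on $\nu$ guarantee.
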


\begin{proof} Recall that each entry of $F_m$ is a polynomial of degree $m$. It follows
that each entry of the classical adjoint, or adjugate, matrix $\adj(F_m)$ is a polynomial of degree $(N-1)m$, since it is the determinant of $F_m$ with one row and one column removed. By Cramer's rule $\det(F_m) F_m^{-1}= \adj(F_m)$. So 
\eqref{eq:phiminverseddxphim} gives 
\begin{equation*}
\phi_m(x)^{-1}\phi_m'(x) \Up_m(x) =- \nu\det(F_m(x))-\adj(F_m(x))JF_m(x)+
x\, \adj(F_m(x))F_m'(x)
\end{equation*}
and each of the terms on the right hand side is a matrix polynomial of degree $mN$.
Moreover, it is a lower triangular matrix polynomial. 
So we can calculate the diagonal entries of $\phi_m(x)^{-1}\phi_m'(x) \Up_m(x)$, and 
for $1\leq i \leq N$ we find 
\begin{equation*}
(\phi_m(x)^{-1}\phi_m'(x) \Up_m(x))_{i,i} = 
\bigl( (-\nu-i)F_m(x)_{i,i} - x F_m'(x)_{i,i}\bigl) \prod_{p\not =i} F_m(x)_{p,p},
\end{equation*}
which is a non-zero polynomial of precise degree $mN$. It follows that the 
leading coefficient of $\phi_m(x)^{-1}\phi_m'(x) \Up_m(x)$ is an invertible
lower triangular matrix. 
\end{proof}

Now we can apply the results of Part \ref{part:general}, and 
we define the first order matrix differential operator $A_m$ acting from the right by 
\begin{equation}\label{eq:intertwinerAm}
A_m=\frac{d}{dx}\Upsilon_m-\phi_m^{-1}\phi_m'\Upsilon_m.
\end{equation}
Then $A_m$ maps row, respectively matrix, polynomials to 
row, respectively matrix, polynomials increasing the degree by $mN$. This follows  
by Lemma \ref{lem:degreephiinvderphiUps} and noting  that  $\Up_m$ is a polynomial of degree $mN+1$. 
\begin{lem}
\label{lem:Am-preserves-regularity}
The operator $A_m$ preserves regularity of leading coefficients of polynomials.
\end{lem}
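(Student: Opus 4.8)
The plan is to show that if $p$ is a row polynomial whose leading coefficient is an invertible (regular) matrix, then $p\cdot A_m$ also has an invertible leading coefficient. Since by the discussion following \eqref{eq:intertwinerAm} the operator $A_m$ raises the degree of a polynomial by exactly $mN$, it suffices to identify the top-degree term of $p\cdot A_m$ and check that it is a product of an invertible matrix with the leading coefficient of $p$. Writing $p(x)=x^n\,P+\text{lot}$ with $P$ invertible, we have
\[
p\cdot A_m = p'\,\Up_m-p\,(\phi_m^{-1}\phi_m')\Up_m,
\]
and the first term has degree $n-1+(mN+1)=n+mN$ while the second term has degree $n+mN$ as well, because $(\phi_m^{-1}\phi_m')\Up_m$ is a polynomial of degree $mN$ by Lemma \ref{lem:degreephiinvderphiUps}. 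So both contributions land in degree $n+mN$, and I must add the two leading coefficients.

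Concretely, the $x^{n+mN}$-coefficient of $p'\Up_m$ is $n\,P\cdot U$, where $U=\det(F_m)$-leading-coefficient issue: $\Up_m(x)=x\det(F_m(x))\Id$ has leading coefficient $u\,\Id$ with $u$ the (nonzero scalar) leading coefficient of $x\det(F_m(x))$, hence it contributes $n u\,P$. The $x^{n+mN}$-coefficient of $p\,(\phi_m^{-1}\phi_m')\Up_m$ is $P\cdot R$, where $R$ is the leading coefficient of the degree-$mN$ polynomial $\phi_m^{-1}\phi_m'\Up_m$, which by Lemma \ref{lem:degreephiinvderphiUps} is an invertible lower triangular matrix. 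Therefore the leading coefficient of $p\cdot A_m$ is
\[
P\,(n u\,\Id - R) = P\,(n u\,\Id - R),
\]
and I need this to be invertible. Since $P$ is invertible by hypothesis, it suffices that $n u\,\Id - R$ be invertible, i.e. that $nu$ is not an eigenvalue of $R$ for the given $n$.

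The main obstacle is precisely this last point: a priori $nu$ could coincide with an eigenvalue of $R$ for some exceptional value of $n$. To handle it I would compute the diagonal (hence the eigenvalues) of the lower triangular matrix $R$ explicitly. From the formula in the proof of Lemma \ref{lem:degreephiinvderphiUps},
\[
(\phi_m(x)^{-1}\phi_m'(x)\Up_m(x))_{i,i}=\bigl((-\nu-i)F_m(x)_{i,i}-xF_m'(x)_{i,i}\bigr)\prod_{p\neq i}F_m(x)_{p,p},
\]
and since each $F_m(x)_{i,i}$ is a nonzero constant times $L_m^{(-\nu-i)}(x)$, a polynomial of exact degree $m$, the top-degree term of $(-\nu-i)F_m(x)_{i,i}-xF_m'(x)_{i,i}$ is $(-\nu-i-m)$ times the leading term of $F_m(x)_{i,i}$. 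Hence $R_{i,i}=(-\nu-i-m)\,u$, so the eigenvalues of $R$ are $u(-\nu-i-m)$ for $i=1,\dots,N$, all strictly negative multiples of $u$ since $\nu>0$. Consequently $nu - R_{i,i}=u(n+\nu+i+m)$ is nonzero for every $n\in\N_0$ (the factor $n+\nu+i+m>0$), so $nu\,\Id-R$ is invertible for all $n$, and $p\cdot A_m$ has invertible leading coefficient. The same argument applies verbatim to matrix polynomials with invertible leading coefficient, row by row, which completes the proof.
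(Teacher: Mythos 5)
Your proposal is correct and follows essentially the same route as the paper: both reduce the claim to computing the leading coefficient of the image, use that $\Up_m$ is a scalar polynomial times the identity and that $\phi_m^{-1}\phi_m'\Up_m$ is lower triangular, and then verify via the diagonal entries that the relevant factor is $(n+\nu+i+m)$ times a nonzero constant, which never vanishes for $\nu>0$. The only cosmetic difference is that you carry a general $p=x^nP+\text{lot}$ and factor out $P$ explicitly, whereas the paper observes it suffices to treat $x^n\cdot A_m$ directly.
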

\begin{proof}
It is enough to check that $x^n\cdot A_m$ has invertible leading coefficient for all $n\in\mathbb{N}_0.$ By definition of $A_m$, $x^n\cdot A_m=x^{n-1}n\Upsilon_m(x)-x^n\phi_m(x)^{-1}\phi_m'(x)\Up_m(x)$
which is a lower triangular polynomial of degree less than or equal to $mN+n$. We can calculate its diagonal entries, and for $1\leq i \leq N$ we find 
\begin{align*}
    (x^n\cdot A_m)_{i,i}&= x^{n-1}n\Upsilon_m(x)-x^n \bigl( (-\nu-i)F_m(x)_{i,i} - x F_m'(x)_{i,i}\bigl) \prod_{p\not =i} F_m(x)_{p,p} \\
     &= \bigl( nF_m(x)_{i,i}+(\nu+i)F_m(x)_{i,i} + x F_m'(x)_{i,i}\bigl) x^n\prod_{p\not =i} F_m(x)_{p,p}.
\end{align*}
By looking at the leading coefficient, we verify that $nF_m(x)_{i,i}+(\nu+i)F_m(x)_{i,i} + x F_m'(x)_{i,i}$ is a polynomial of degree $m$. It follows that $x^n\cdot A_m$ has precise degree $mN+n$ and that its leading coefficient is an invertible lower triangular matrix.
\end{proof}

Following \eqref{eq:Bhat} we define the first order matrix
differential operator $B_m$ acting from the right by 
\begin{align}\label{eq:intertwiner-Bn}
\begin{split}
 B_m&=\Up_m(x)^{-1}\left(\frac{d}{dx}x+M_1^{(\al,\nu)}x+M_2^{(\al,\nu)}+\phi_m(x)^{-1}\phi_m'(x)x\right)\\
&=\frac{d}{dx}\Up_m^{-1}(x)x+(\Up_m^{-1}(x))'x+\Up_m^{-1}(x)(M^{(\al,\nu)}_1x+M^{(\al,\nu)}_2)+\Up_m^{-1}(x)\phi_m^{-1}(x)\phi'_m(x)x.
\end{split}
\end{align}
and Lemma \ref{lem:TO=AB+la} holds.


\section{Matrix exceptional Laguerre polynomials}\label{sec:MatrixXOLaguerre}
In this section we introduce the exceptional weight  and the sequence of matrix exceptional Laguerre 
polynomials. We assume $\nu>\max(0,m-1)$, so that the zeros 
of $\det(F_m)$ are not contained in $[0, \infty)$. Since there are finitely many zeros, we have 
\begin{equation}\label{eq:distancezeroesdetFmto0infty}
\de = \min \{ d(z) \mid z\in \C, \det(F_m(z))=0 \} >0 
\end{equation}
where $d(z) = \inf\{ |x-z| \mid x\in [0,\infty)\}$ is the distance of $z\in \C$ to $[0,\infty)$. 
Following Definition \ref{defn:exceptional-weight} we define the matrix weight function on $(0,\infty)$ by 
\begin{equation}\label{eq:defnweight-excep-laguerre}
\widehat{W}_\mu^{(\alpha,\nu,m)}(x)=\frac{W_\mu^{(\alpha,\nu)}(x)}{x\left(\det(F_m(x))\right)^{2}}.
\end{equation}
Because we can absorb the factor $x$ in the numerator into the diagonal part $T^{(\nu)}$ of \eqref{eq:defWeightMLaguerrepols} we
see that $\widehat{W}_\mu^{(\alpha,\nu,m)}$ is a positive definite matrix on $[0,\infty)$ and $\det(\widehat{W}_\mu^{(\alpha,\nu,m)}(x))>0$ for $x>0$. Moreover, this remark with 
\eqref{eq:distancezeroesdetFmto0infty} implies that all moments 
of the matrix weigh $\widehat{W}_\mu^{(\alpha,\nu,m)}$ on $(0,\infty)$ exist. 

We note that the exceptional weight $\widehat{W}_\mu^{(\alpha,\nu,m)}$ is reducible to weights of smaller size if and only if the Laguerre weight $W_\mu^{(\alpha,\nu)}(x)$ is reducible, since they only differ by a scalar factor. Extensive computations indicate that the Laguerre weight is irreduceble but we do not have a proof, see the discussion in \cite[\S 1]{KoelR}.

\begin{prop}\label{prop:polsdenseinWandhatW}
The space $\Pol$ of row vector valued polynomials is dense in 
$L^2(W^{(\al,\nu)}_\mu)$ and dense in $L^2(\widehat{W}^{(\al,\nu,m)}_\mu)$.
\end{prop}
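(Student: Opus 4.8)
The plan is to reduce the matrix statement to the scalar one by exploiting the triangular factorizations of the two weights. Recall from \eqref{eq:defWeightMLaguerrepols} that $W_\mu^{(\alpha,\nu)} = L_\mu^{(\alpha)}\, T^{(\nu)}\, (L_\mu^{(\alpha)})^\ast$, where $L_\mu^{(\alpha)}$ is a lower triangular polynomial matrix with $1$'s on the diagonal; hence $L_\mu^{(\alpha)}$ is invertible over $\C[x]$ and $(L_\mu^{(\alpha)})^{-1}$ is again a polynomial matrix. Consequently the map $f\mapsto f\,L_\mu^{(\alpha)}$ is a unitary isomorphism $L^2(W_\mu^{(\alpha,\nu)})\to L^2(T^{(\nu)})$ that carries $\Pol$ onto $\Pol$ (since both $L_\mu^{(\alpha)}$ and its inverse are polynomial), so $\Pol$ is dense in $L^2(W_\mu^{(\alpha,\nu)})$ if and only if it is dense in $L^2(T^{(\nu)})$. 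Because $\widehat{W}_\mu^{(\alpha,\nu,m)}$ differs from $W_\mu^{(\alpha,\nu)}$ only by the positive scalar factor $1/(x(\det F_m)^2)$, the same manipulation yields $\widehat{W}_\mu^{(\alpha,\nu,m)} = L_\mu^{(\alpha)}\, \widehat{T}^{(\nu,m)}\, (L_\mu^{(\alpha)})^\ast$ with $\widehat{T}^{(\nu,m)}$ diagonal and $\widehat{T}^{(\nu,m)}(x)_{i,i} = \delta_i^{(\nu)} e^{-x} x^{\nu+i-1}(\det F_m(x))^{-2}$, so the same reduction applies and it suffices to establish density of $\Pol$ in $L^2(T^{(\nu)})$ and in $L^2(\widehat{T}^{(\nu,m)})$.

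Since $T^{(\nu)}$ and $\widehat{T}^{(\nu,m)}$ are diagonal, $L^2(T^{(\nu)}) = \bigoplus_{i=1}^N L^2(w_i)$ and $L^2(\widehat{T}^{(\nu,m)}) = \bigoplus_{i=1}^N L^2(\widehat w_i)$, where $w_i(x) = \delta_i^{(\nu)} e^{-x} x^{\nu+i}$ and $\widehat w_i(x) = \delta_i^{(\nu)} e^{-x} x^{\nu+i-1}(\det F_m(x))^{-2}$ are scalar weights on $(0,\infty)$, and under these identifications $\Pol$ corresponds precisely to the $N$-tuples of scalar polynomials. It is therefore enough to show that scalar polynomials are dense in each $L^2(w_i)$ and each $L^2(\widehat w_i)$. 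The weight $w_i$ is, up to the positive constant $\delta_i^{(\nu)}$, the classical Laguerre weight with parameter $\nu+i>0$. For $\widehat w_i$, the standing assumption $\nu>\max(0,m-1)$ guarantees, as recorded around \eqref{eq:detFm}--\eqref{eq:distancezeroesdetFmto0infty}, that $\det F_m$ has no zeros on $[0,\infty)$; hence $(\det F_m)^{-2}$ is continuous and bounded above and below by positive constants on every compact subset of $[0,\infty)$ and is $O(x^{-2mN})$ as $x\to\infty$, while $\nu+i-1>-1$ makes $\widehat w_i$ integrable near the origin. In particular $\widehat w_i$ is a genuine positive weight with all moments finite, and $\widehat w_i(x)\le C_i\, e^{-x}x^{\nu+i-1}$ for all $x\ge 0$.

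Both $w_i$ and $\widehat w_i$ then satisfy an exponential moment bound, $\int_0^\infty e^{\varepsilon x}u(x)\,dx<\infty$ for some $\varepsilon>0$, thanks to the factor $e^{-x}$ together with at most polynomial growth. This forces density: if $u$ is such a weight on $(0,\infty)$ and $f\in L^2(u)$ is orthogonal to all polynomials, then $G(z)=\int_0^\infty f(x)e^{izx}u(x)\,dx$ is holomorphic in a strip $|\mathrm{Im}\,z|<\varepsilon'$ (dominated convergence and Cauchy--Schwarz, using $\int e^{\varepsilon x}u<\infty$), and $G^{(n)}(0)=i^n\int_0^\infty f(x)x^n u(x)\,dx=0$ for all $n\in\N_0$, so $G\equiv 0$; thus the Fourier transform of $f u$ (extended by zero to $\R$) vanishes, whence $f u=0$ a.e.\ and $f=0$. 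Equivalently one may simply invoke that an exponential moment bound makes the (Stieltjes, hence Hamburger) moment problem determinate \cite{Berg} and that for a determinate moment problem polynomials are dense in $L^2$ \cite{Akhi}. I do not anticipate a genuine obstacle here; the only step that needs care is the verification that $\widehat w_i$ is an honest weight, i.e.\ that $\det F_m$ stays away from zero on $[0,\infty)$ and that the exponent $\nu+i-1$ is admissible at the origin, and both of these are exactly what the hypothesis $\nu>\max(0,m-1)$ and the remarks surrounding \eqref{eq:detFm}--\eqref{eq:defnweight-excep-laguerre} provide.
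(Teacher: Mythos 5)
Your argument is correct, but it takes a genuinely different route from the paper's. You exploit the unipotent lower-triangular polynomial factor $L_\mu^{(\alpha)}$ in \eqref{eq:defWeightMLaguerrepols}: since $\det L_\mu^{(\alpha)}=1$, its inverse is again a polynomial matrix, so $f\mapsto f L_\mu^{(\alpha)}$ is a unitary from $L^2(W_\mu^{(\alpha,\nu)})$ onto $L^2(T^{(\nu)})$ carrying $\Pol$ onto $\Pol$, and the problem splits into $N$ scalar density statements for (modified) Laguerre weights on $(0,\infty)$, each settled by the exponential-moment argument. The paper instead stays with the matrix measure: it writes it as $V\,d\tau$ with $\tau$ the trace measure, verifies via \eqref{eq:RNdertraceLebesgue} and Freud's criterion that $\tau$ is determinate (the same exponential moment bound you use), and then shows that any $\xi\perp\Pol$ satisfies $\xi V=0$ $\tau$-a.e., whence $\xi=0$ by a.e. invertibility of $\widehat{W}^{(\al,\nu,m)}_\mu$. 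Your reduction is cleaner and fully diagonalizes the problem, but it is tied to the LDU structure \eqref{eq:weightstructure5} of this particular weight; the paper's trace-measure argument needs only determinacy of the trace measure plus a.e. invertibility, so it transfers to matrix weights that admit no polynomial unimodular congruence to a diagonal weight. Both proofs rest on the same scalar input. One small slip: ``Stieltjes, hence Hamburger'' determinate is stated backwards (Stieltjes determinacy does not in general imply Hamburger determinacy); this is immaterial here, since the exponential moment bound yields Hamburger determinacy directly, and your self-contained argument with the analytic function $G$ bypasses determinacy altogether.
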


\begin{proof}
Note first that $\Pol$ is contained in the $L^2$-spaces, since both have finite moments. 
For the measure $\widehat{W}^{(\al,\nu,m)}_\mu$ we see that the trace measure $\tau$ is 
absolutely continuous with respect to Lebesgue measure $dx$ on $(0,\infty)$ and its Radon-Nikodym derivative is 
\begin{equation}\label{eq:RNdertraceLebesgue}
\frac{d\tau}{dx} = \frac{1}{(\det(F_m(x)))^2} e^{-x} \sum_{i=1}^N \de^{(\nu)}_i x^{\nu+i-1} 
\end{equation}
by \eqref{eq:defWeightMLaguerrepols}. 
Because of \eqref{eq:distancezeroesdetFmto0infty} we see that the function 
$\exp(\be |x|)$ is integrable with respect to the trace measure for some positive $\be$, 
e.g. $\be=\frac{1}{2}$. By \cite[Thm.~5.2, p.~80]{Freu}
the trace measure $\tau$ is determinate, and hence polynomials are dense in the 
Hilbert space $L^2(\tau)$, the weighted $L^2$-space with respect to the 
trace measure as Borel measure on $(0,\infty)$, see \cite{Akhi}.  

We write the matrix measure as $V\tau= (V_{i,j})_{i,j=1}^N\tau$. 
Assume that $\xi \in L^2(\widehat{W}^{(\al,\nu,m)}_\mu)$, $\xi=(\xi_1,\cdots, \xi_N)$,  is orthogonal to the space $\Pol$. 
By taking a row vector polynomial which is zero except in the $j$-th coefficient, 
$(0,\cdots,0,p,0,\cdots,0)$, we see that 
\[
\int_0^\infty \Bigl( \sum_{i=1}^N \xi_i(x) V_{i,j}(x)\Bigr) \overline{p(x)} \, d\tau(x) = 0
\]
for all scalar polynomials $p$. Since $\tau$ corresponds to a (scalar) determinate moment problem, it follows that $\sum_{i=1}^N \xi_i V_{i,j}=0$ in $L^2(\tau)$. Since $j$ is arbitrary, we find $\xi V = (0,\cdots, 0)$ as row vector valued function $\tau$-a.e. 
Since $\det(V(x)) (\frac{d\tau}{dx})^N = 
\det(\widehat{W}^{(\al,\nu,m)}_\mu(x))\not=0$ for $x\not=0$, with the notation as in  \eqref{eq:RNdertraceLebesgue},
we find that $\xi=0$ in $L^2(\widehat{W}^{(\al,\nu,m)}_\mu)$. So 
$\Pol$ is dense in $L^2(\widehat{W}^{(\al,\nu,m)}_\mu)$.
The proof for $L^2(W^{(\al,\nu)}_\mu)$ is analogous. 
\end{proof}

Before continuing we sharpen the result of \cite[Prop.~4.3]{KoelR}.
We recall that we have the sequence of monic matrix orthogonal polynomials 
$(P_n^{(\alpha,\nu)})_n$ with respect to $W_\mu^{(\alpha,\nu)}$, i.e. 
$P_n^{(\alpha,\nu)}$ is a matrix polynomial with leading coefficient the identity matrix
and satisfying 
\begin{equation}\label{eq:MVorthogonalpolsW}
\int_0^\infty P_n^{(\al,\nu)}(x)\, W_\mu^{(\al,\nu)}(x) 
\bigl(P_m^{(\al,\nu)}(x)\bigr)^\ast\, dx = \de_{m,n} H^{(\al,\nu)}_n,
\end{equation}
where the matrix integration is done entrywise. Here $H_n$ is a constant positive definite matrix, which is the matrix squared norm of $P_n^{(\alpha,\nu)}$. 
Then \cite[Prop.~4.3]{KoelR} states that as a matrix differential operator acting on the right on a matrix function we have
\begin{equation}\label{eq:eigenvaluematrixLaguerrepols}
P_n^{(\al,\nu)}\cdot T_0=\Ga_n^{(\alpha,\nu)}\cdot P_n^{(\al,\nu)}, \qquad  \Ga^{(\al,\nu)}_n=(-n+\al-\nu)(A_\mu+1)^{-1}-J.
\end{equation}
Note that $\si(\Ga_n^{(\alpha,\nu)})=\{ -n+\al-\nu-j\mid j\in \{1,\cdots, N\}\}$ and 
diagonalising gives 
$R_n\Ga_n^{(\alpha,\nu)} = D_n^{(\al,\nu)} R_n$, with 
$D_n^{(\al,\nu)}$ the diagonal matrix with $-n+\al-\nu-j$ at its $(j,j)$-th entry. Note
that $R_n$ is lower triangular and invertible. 
Then the $j$-the row of $R_nP^{(\al,\nu)}_n$ is a polynomial of degree $n$ in $x$, hence an 
element of $\Pol$, and an
eigenfunction of $T_0$ for the eigenvalue $-n+\al-\nu-j$. We denote this element by $p_n^j\in \Pol$.
Then the set eigenvalues of $T_0$ is $\al-\nu-1-\N_0$, and the multiplicity of the 
eigenvalue $\al-\nu-1-p$, $p\in \N_0$, is $\min(N,p+1)$ and an orthogonal basis of 
the eigenspace for the eigenvalue  $\al-\nu-1-p$ is given by $\{ p_n^j\mid n+j=p+1\}$. 
Indeed, since
all the degrees are different orthogonality follows from \eqref{eq:MVorthogonalpolsW} after multiplying from the left by $R_n$ and the right from $R_n^\ast$. Since the eigenspaces in $\Pol$ of $T_0$ 
are orthogonal for different eigenvalues, we see that $(p_n^j)_{n,j}$ gives an orthogonal basis of eigenvectors.

\begin{prop}\label{prop:T0essentiallyselfadjoint}
The operator $T_0$ with $D(T_0)=\Pol$ is essentially self-adjoint on 
$L^2(W^{(\al,\nu)}_\mu)$. Its closure $\overline{T_0}$ has compact 
resolvent and $\si(\overline{T_0}) =\al-\nu-1-\N_0$. 
\end{prop}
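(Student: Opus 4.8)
The plan is to read off everything from the complete system of polynomial eigenvectors of $T_0$ exhibited just before the statement. Recall that $(p_n^j)_{n\in\N_0,\ 1\le j\le N}$ is a family of elements of $\Pol=D(T_0)$, mutually orthogonal in $L^2(W^{(\al,\nu)}_\mu)$, satisfying $p_n^j\cdot T_0=(-n+\al-\nu-j)\,p_n^j$ with real eigenvalue. The first point to establish is that this family is complete: for fixed $n$ the rows $p_n^1,\dots,p_n^N$ of $R_nP_n^{(\al,\nu)}$ have leading coefficients the rows of the invertible matrix $R_n$, hence $\{p_k^j\mid 0\le k\le n,\ 1\le j\le N\}$ is a basis of the $(n+1)N$-dimensional space of row vector polynomials of degree at most $n$. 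Letting $n\to\infty$ shows that the linear span of the $p_n^j$ is all of $\Pol$, which is dense in $L^2(W^{(\al,\nu)}_\mu)$ by Proposition~\ref{prop:polsdenseinWandhatW}; after normalising, $(p_n^j)_{n,j}$ is thus an orthonormal basis of $L^2(W^{(\al,\nu)}_\mu)$ consisting of eigenvectors of $T_0$.

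Next I would deduce essential self-adjointness from the vanishing of both deficiency indices. Suppose $f\in D(T_0^\ast)$ satisfies $f\cdot T_0^\ast=\mu f$ with $\mu\in\{i,-i\}$. Pairing against a basis vector and using that $T_0$ is symmetric on $\Pol$ gives
\[
(-n+\al-\nu-j)\,\langle p_n^j,f\rangle_W=\langle p_n^j\cdot T_0,f\rangle_W=\langle p_n^j,f\cdot T_0^\ast\rangle_W=\overline{\mu}\,\langle p_n^j,f\rangle_W ,
\]
and since $-n+\al-\nu-j$ is real while $\overline{\mu}=\mp i$ is not, $\langle p_n^j,f\rangle_W=0$ for all $n,j$; completeness forces $f=0$. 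Hence $\Ker(T_0^\ast-i)=\Ker(T_0^\ast+i)=\{0\}$, so $T_0$ is essentially self-adjoint and $\overline{T_0}$ is the self-adjoint operator diagonalised by $(p_n^j)_{n,j}$ with eigenvalues $-n+\al-\nu-j$.

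Finally, the spectrum of a self-adjoint operator with a complete orthonormal eigenbasis is the closure of its set of eigenvalues. That set equals $\{-n+\al-\nu-j\mid n\in\N_0,\ 1\le j\le N\}=\al-\nu-1-\N_0$, where $\al-\nu-1-p$ occurs with the finite multiplicity $\min(N,p+1)$ already computed; being discrete and bounded above, this set is closed, so $\si(\overline{T_0})=\al-\nu-1-\N_0$. For the compact resolvent, fix $z\notin\si(\overline{T_0})$: in the eigenbasis $(\overline{T_0}-z)^{-1}$ acts diagonally by the scalars $(-n+\al-\nu-j-z)^{-1}$, which tend to $0$, so it is a norm limit of finite-rank operators and hence compact. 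The only step calling for real care is the completeness of $(p_n^j)_{n,j}$ (the degree and leading-coefficient bookkeeping combined with Proposition~\ref{prop:polsdenseinWandhatW}); once that is in place the statement is the routine spectral theory of an operator possessing a complete eigenbasis, with no new estimate required.
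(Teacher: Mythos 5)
Your proof is correct and rests on the same pillar as the paper's: the family $(p_n^j)_{n,j}$ of polynomial eigenvectors of $T_0$ with real eigenvalues forms a complete orthogonal system in $L^2(W^{(\al,\nu)}_\mu)$, which you justify by the degree and leading-coefficient count coming from the invertibility of $R_n$ together with Proposition~\ref{prop:polsdenseinWandhatW}. The only divergence is the criterion invoked at the end: you conclude essential self-adjointness from the vanishing of the deficiency indices, i.e. $\Ker(T_0^\ast - i)=\Ker(T_0^\ast+i)=\{0\}$, whereas the paper identifies $D(T_0^\ast)$ explicitly as the maximal domain in the eigenbasis expansion and observes that every element of the graph of $T_0^\ast$ lies in the closure of the graph of $T_0$ by truncating the expansion, so that $T_0^\ast=\overline{T_0}$ directly. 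The two computations are essentially the same (pairing a putative adjoint vector against $p_n^j$ and using reality of the eigenvalues); your version is marginally more economical because only non-real $\mu$ needs to be excluded, while the paper's version has the small bonus of exhibiting the spectral decomposition of $\overline{T_0}$ explicitly. The identification of the spectrum as the (closed, discrete) set of eigenvalues and the compact-resolvent argument via finite-dimensional eigenspaces with eigenvalues diverging to $-\infty$ are the same in both.
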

\begin{proof} It follows from \cite[\S 4]{KoelR} that $T_0$ with $D(T_0)=\Pol\subset
L^2(W^{(\al,\nu)}_\mu)$
is symmetric. Moreover, from \eqref{eq:opT0} we see that $T_0$ preserves 
the space of row polynomials and even its degree. By Proposition \ref{prop:polsdenseinWandhatW}, $T_0$ is densely defined and we check that $D(T_0^\ast)$ 
is the maximal domain, i.e. 
\begin{equation*}
D(T_0^\ast) = \{ v = \sum_{n,j} c_n^j p_n^j \in L^2(W^{(\al,\nu)}_\mu) \mid
\sum_{n,j} c_n^j (\al-\nu-n-j) p_n^j \in L^2(W^{(\al,\nu)}_\mu) \}
\end{equation*}
with $T_0^\ast \sum_{n,j} c_n^j p_n^j = \sum_{n,j} c_n^j (\al-\nu-n-j) p_n^j$. 
Since any element of the graph of $T^\ast_0$ is also in the closure of the graph of 
$T_0$ by approximating by finite sums, we see that $T_0^\ast=\overline{T_0}$ and 
$T_0$ is essentially self-adjoint. 
The statement on the compact resolvent follows, since the eigenspaces are finite-dimensional and the eigenvalues diverge to $-\infty$. 
\end{proof}
\begin{rmk}
The condition $\nu>\max(0,m-1)$ implies that $\si(\overline{T_0})< \al-m$. 
\end{rmk}

\begin{lem}\label{lem:AmBmdenselydefinedoperators} 
The first order matrix differential operators $A_m$ and $B_m$ are 
densely defined operators with 
\begin{equation*}
\begin{split}
&A_m \colon D(A_m)=\Pol \subset L^2(W^{(\al,\nu)}_\mu) 
\to L^2(\widehat{W}^{(\al,\nu,m)}_\mu), \\
&B_m \colon D(B_m)=\Pol \subset  L^2(\widehat{W}^{(\al,\nu,m)}_\mu)
\to L^2(W^{(\al,\nu)}_\mu). 
\end{split}
\end{equation*}
\end{lem}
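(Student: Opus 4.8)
The plan is to verify the two requirements of a densely defined operator: that the common domain $\Pol$ is dense in the relevant source Hilbert space, and that each operator carries $\Pol$ into the stated target space. Density is immediate from Proposition \ref{prop:polsdenseinWandhatW}, which gives that $\Pol$ is dense both in $L^2(W^{(\al,\nu)}_\mu)$ and in $L^2(\widehat{W}^{(\al,\nu,m)}_\mu)$; moreover, since both weights have finite moments of all orders, $\Pol\subset L^2(W^{(\al,\nu)}_\mu)\cap L^2(\widehat{W}^{(\al,\nu,m)}_\mu)$, so the operators are honestly densely defined once the mapping properties are established.

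For $A_m$ the mapping property is essentially already available: by \eqref{eq:intertwinerAm}, Lemma \ref{lem:degreephiinvderphiUps}, and the fact that $\Up_m$ has degree $mN+1$, the operator $A_m$ sends a row (or matrix) polynomial to a row (or matrix) polynomial, raising the degree by $mN$. Hence for every $p\in\Pol$ we get $A_m\cdot p\in\Pol\subset L^2(\widehat{W}^{(\al,\nu,m)}_\mu)$, which is what is needed.

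The substantive point is the mapping property for $B_m$, namely that $B_m\cdot p\in L^2(W^{(\al,\nu)}_\mu)$ for every $p\in\Pol$. From \eqref{eq:intertwiner-Bn} we have $B_m\cdot p=\bigl(x\det(F_m(x))\bigr)^{-1}\bigl(p'(x)\,x+p(x)(M_1^{(\al,\nu)}x+M_2^{(\al,\nu)})+p(x)\,\phi_m(x)^{-1}\phi_m'(x)\,x\bigr)$. By \eqref{eq:phiminverseddxphim}, $\phi_m^{-1}\phi_m'\,x=-\nu-F_m^{-1}JF_m+x\,F_m^{-1}F_m'$ is analytic on a neighbourhood of $[0,\infty)$, because $\det(F_m)$ has no zeros there thanks to $\nu>\max(0,m-1)$ (see \eqref{eq:detFm}, \eqref{eq:distancezeroesdetFmto0infty}); in particular the second big factor above is analytic near $x=0$, so $B_m\cdot p$ is a rational, row-vector-valued function that is smooth on $(0,\infty)$ and has at most a simple pole at $x=0$. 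It then remains to check integrability of $(B_m\cdot p)(x)\,W^{(\al,\nu)}_\mu(x)\,(B_m\cdot p)(x)^\ast$ over $(0,\infty)$, which reduces to the two endpoints. Near $x=0$, \eqref{eq:defWeightMLaguerrepols} shows that every entry of $W^{(\al,\nu)}_\mu(x)$ is $O(x^{\nu+1})$, while $B_m\cdot p=O(x^{-1})$, so the integrand is $O(x^{\nu-1})$, integrable precisely because $\nu>0$. Near $x=\infty$, $B_m\cdot p$ grows at most polynomially while $W^{(\al,\nu)}_\mu$ decays like $e^{-x}$ times a polynomial, so the integrand decays exponentially. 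Therefore $B_m\cdot p\in L^2(W^{(\al,\nu)}_\mu)$, and $B_m$ is densely defined as claimed.

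The only delicate step is the endpoint analysis at $x=0$: one must make the order estimates $B_m\cdot p=O(x^{-1})$ and $\|W^{(\al,\nu)}_\mu(x)\|=O(x^{\nu+1})$ precise enough to conclude square-integrability there, and this is exactly where the hypothesis $\nu>0$ (which follows from $\nu>\max(0,m-1)$) is used. The rest is routine bookkeeping with the explicit formulas \eqref{eq:intertwinerAm}, \eqref{eq:intertwiner-Bn}, and \eqref{eq:phiminverseddxphim}.
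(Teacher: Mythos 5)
Your proposal is correct and follows essentially the same route as the paper: density of $\Pol$ via Proposition \ref{prop:polsdenseinWandhatW}, the fact that $A_m$ preserves $\Pol$, and for $B_m$ the observation that $|\det(F_m)|^{-1}$ is bounded on $[0,\infty)$ so the only obstruction is the simple pole of $\Up_m^{-1}$ at $x=0$, which is absorbed by the $O(x^{\nu+1})$ vanishing of $W^{(\al,\nu)}_\mu$ there (the paper phrases this as the condition $\tfrac1x\in L^2(W^{(\al,\nu)}_\mu)$, which amounts to your $O(x^{\nu-1})$ estimate). One minor slip: your displayed formula for $p\cdot B_m$ omits the term $p\,(\Up_m^{-1})'\,x$ appearing in the second line of \eqref{eq:intertwiner-Bn}, but since $(\Up_m^{-1})'\,x$ is also $O(x^{-1})$ near the origin and rationally bounded at infinity, the endpoint estimates and the conclusion are unaffected.
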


\begin{proof} Since $\Pol$ is dense in both $L^2(W^{(\al,\nu)}_\mu)$ and 
$L^2(\widehat{W}^{(\al,\nu,m)}_\mu)$ by 
Proposition \ref{prop:polsdenseinWandhatW} we see that both operators have dense domain.
Since $A_m$ preserves the space $\Pol$, cf. \eqref{eq:defA}, 
we see that $A_m$ maps $\Pol$ into $L^2(\widehat{W}^{(\al,\nu,m)}_\mu)$. 
So $A_m$ is well-defined. 

It remains to show that $B_m$ maps 
$\Pol$ into $L^2(W^{(\al,\nu)}_\mu)$, and we see that by 
\eqref{eq:intertwiner-Bn} the operator $B_m$ does not preserve polynomials
in general. Since $\Up_m(x) = x\det(F_m(x))$, and the zeros of $\det(F_m(x))$ are 
off $[0,\infty)$ by the assumption $\nu>\max(0,m-1)$, cf. 
\eqref{eq:distancezeroesdetFmto0infty}, 
so that $0<|\det(F_m(x))|^{-1}<M$ on $[0,\infty)$. Considering 
\eqref{eq:intertwiner-Bn} and using \eqref{eq:phiminverseddxphim} we see that 
$B_m$ maps $\Pol$ into $L^2(W^{(\al,\nu)}_\mu)$ if $\frac{1}{x} \in L^2(W^{(\al,\nu)}_\mu)$. Because of \eqref{eq:defWeightMLaguerrepols} we see that the diagonal
elements in the diagonal matrix $x^{-2}T^{(\nu)}$ are $\de^{(\nu)}x^{\nu+i-2}e^{-x}$ 
for $i\in \{1,\cdots, N\}$, so that all terms are integrable because $\nu>\max(0,m-1)$. 
Hence, $B_m$ maps $\Pol$ into $L^2(W^{(\al,\nu)}_\mu)$.
\end{proof}

In order to have the results of Section \ref{sec:symmdiffopexceptweights} and 
Section \ref{sec:matriXOL} we need to check the conditions in 
Lemma \ref{lem:strange-relation} and Proposition \ref{prop:-BadjA}. 
Recall that $F_m$ has no zeros on $[0,\infty)$ and that $|\det(F_m)|$ is bounded from zero and from above. For Lemma \ref{lem:strange-relation} we see that both terms vanish for the 
limit to $\infty$ using the fact the $e^{-x}$ in the weight kills all powers of $x$. 
For Proposition \ref{prop:-BadjA} we see in the same way that the boundary term
vanishes at $\infty$ for any two polynomials $p,q\in \Pol$. Moreover, at $0$ the term 
also vanishes since the powers of $x$ are strictly positive. 

Note that $\al-m$ is not in the spectrum $T_0^\ast$, so that the spectrum of the self-adjoint operators $\overline{T_0}$ and $S_1$ is the same up to possibly one point. Since the self-adjoint operator $\overline{T_0}$ has compact resolvent and a basis of eigenfunctions in $\Pol$ 
by Proposition \ref{prop:T0essentiallyselfadjoint}, and since $A$ preserves $\Pol$, we see that $p_n^j\cdot A_m$ are orthogonal eigenfunctions for $S_1$ for the same eigenvalue $-\al-\nu-n-j$.

\begin{prop}
\label{prop:ker-B}
Let $f=(f_1,\ldots,f_N)\in\Ker(B_m)$ then $f_i(x)=e^x r_i(x),$ where $r_i$ is a rational function of $x.$ Hence in $L^2(\widehat{W}_\mu^{(\alpha,\nu,m)})$ the kernel of $B_m$ is trivial. 
\end{prop}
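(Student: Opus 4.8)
The plan is to compute $\Ker(B_m)$ by reducing to the scalar Laguerre situation through the $LDU$ structure of the weight, to solve the resulting first order scalar equations explicitly, and then to observe that the solutions grow like $e^x$ and hence cannot be square integrable against $\widehat{W}_\mu^{(\alpha,\nu,m)}$. Throughout, write $L=L_\mu^{(\alpha)}$ and let $\widehat{B}_m$ denote the operator in parentheses in \eqref{eq:intertwiner-Bn}, so that $B_m=\Upsilon_m^{-1}\widehat{B}_m$, with $\Upsilon_m=x\det(F_m)\,\Id$.

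First I would pass to the diagonal picture. By the remark following Proposition \ref{prop:phim-seedfunction}, $\phi_m=M\phi_m^dL^{-1}$ with $\phi_m^d(x)=\mathrm{diag}\bigl(x^{-\nu-i}L_m^{(-\nu-i)}(x)\bigr)$ a diagonal seed function for the diagonal operator $T_0^d$ and $M$ a constant invertible matrix; moreover $\Upsilon_m$ is scalar, hence commutes with $L$. Since the constant left factor $M$ drops out of $\phi_m^{-1}\phi_m'=L(\phi_m^d)^{-1}(\phi_m^d)'L^{-1}-L'L^{-1}$, a direct computation using the relations $F_2=LF_2^dL^{-1}$ and $F_1L=2L'F_2^d+LF_1^d$ from Section \ref{sec:T0-diagonalizable} shows $\widehat{B}_m=L\widehat{B}_m^dL^{-1}$, and therefore
\[
B_m=L\,B_m^d\,L^{-1},\qquad B_m^d=\Upsilon_m^{-1}\Bigl(\frac{d}{dx}F_2^d+F_1^d+(\phi_m^d)^{-1}(\phi_m^d)'F_2^d\Bigr),
\]
the operator built from the diagonal data $(T_0^d,\phi_m^d,\Upsilon_m)$. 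Because $L$ is lower triangular with unit diagonal, $L^{-1}$ is again a polynomial matrix, so $f\in\Ker(B_m)$ if and only if $fL\in\Ker(B_m^d)$.

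Next, as $T_0^d$, $\phi_m^d$ are diagonal and $\Upsilon_m$ is scalar, $B_m^d=\mathrm{diag}(B_1^d,\dots,B_N^d)$ and $\Ker(B_m^d)=\Ker(B_1^d)\times\cdots\times\Ker(B_N^d)$. For a scalar function $g$ the equation $g\cdot B_i^d=0$ is equivalent to $h\cdot\widehat{B}_i^d=0$ with $h=g/(x\det F_m)$, where $\widehat{B}_i^d=\frac{d}{dx}x+\bigl(1-x+x\ell_i'/\ell_i\bigr)$ and $\ell_i=L_m^{(-\nu-i)}$; here one uses that the $(i,i)$-entry of $F_1^d$ is $\nu+i+1-x$ and that $(\phi_m^d)_{i,i}^{-1}(\phi_m^d)_{i,i}'=-(\nu+i)/x+\ell_i'/\ell_i$. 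Separating variables in $xh'+h(1-x+x\ell_i'/\ell_i)=0$ gives $h(x)=c\,e^x/(x\ell_i(x))$, hence $g(x)=c\,\det(F_m(x))\,e^x/\ell_i(x)$, which by \eqref{eq:detFm} is $c$ times $e^x$ times a polynomial. Thus every element of $\Ker(B_m^d)$ has coordinates that are constant multiples of $e^x$ times polynomials, so $fL=e^x\times(\text{polynomial row vector})$ and $f=(fL)L^{-1}$ is $e^x$ times a rational (indeed polynomial) row vector, i.e. $f_i(x)=e^x r_i(x)$ with $r_i$ rational, which is the first assertion.

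Finally, for the $L^2$ statement I would use $\widehat{W}_\mu^{(\alpha,\nu,m)}=W_\mu^{(\alpha,\nu)}/\bigl(x(\det F_m)^2\bigr)$ and $W_\mu^{(\alpha,\nu)}=LT^{(\nu)}L^\ast$ to obtain, writing $(fL)_i=c_ie^xp_i(x)$ as above,
\[
f\,\widehat{W}_\mu^{(\alpha,\nu,m)}\,f^\ast=\frac{\sum_{i=1}^N\delta_i^{(\nu)}e^{-x}x^{\nu+i}\,\bigl|(fL)_i\bigr|^2}{x(\det F_m)^2}=e^{x}\,G(x),
\]
where $G(x)=(\det F_m(x))^{-2}\sum_i\delta_i^{(\nu)}|c_i|^2x^{\nu+i-1}|p_i(x)|^2\ge 0$ on $(0,\infty)$. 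Since $\det(F_m)$ is bounded away from $0$ and $\infty$ on $[1,\infty)$ and the $p_i$ are not identically zero, if some $c_i\neq0$ then $G(x)\geq c\,x^{p}$ for large $x$ and some $c>0$, $p\in\mathbb{R}$, so $\int_1^\infty e^{x}G(x)\,dx=\infty$ and $f\notin L^2(\widehat{W}_\mu^{(\alpha,\nu,m)})$. Hence $f\in\Ker(B_m)\cap L^2(\widehat{W}_\mu^{(\alpha,\nu,m)})$ forces all $c_i=0$, i.e. $fL=0$ and $f=0$. I expect the main obstacle to be the bookkeeping in the first step: one must verify carefully that $B_m$ and $B_m^d$ are conjugate under $L$ despite the constant left factor $M$ in $\phi_m$ and the inhomogeneous term $2L'F_2^d$ in the conjugation formula for $F_1$; once that is settled, the remaining steps are the elementary scalar Laguerre integration and a routine growth estimate.
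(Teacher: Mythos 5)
Your proposal is correct, but it reaches the conclusion by a genuinely different route from the paper. The paper stays entirely in the non-diagonal picture: it substitutes $g=f\Upsilon_m^{-1}e^{-x}$, observes that all coefficient matrices of the resulting first order system ($M_1^{(\alpha,\nu)}+1$, $M_2^{(\alpha,\nu)}$, $\phi_m^{-1}\phi_m'$) are lower triangular, and solves the system recursively starting from the last column, where the equation collapses to $(g_N x (F_m)_{N,N})'=0$; each subsequent $g_j$ solves a first order scalar equation with a rational inhomogeneous term built from $g_{j+1},\dots,g_N$, hence is rational. You instead conjugate by $L_\mu^{(\alpha)}$ to decouple the system completely into $N$ independent scalar equations, which you integrate in closed form. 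Your key identity $B_m=LB_m^dL^{-1}$ does hold: with $F_2$ scalar, the relation $F_1=2L'xL^{-1}+LF_1^dL^{-1}$ combined with $\phi_m^{-1}\phi_m'=L(\phi_m^d)^{-1}(\phi_m^d)'L^{-1}-L'L^{-1}$ makes the inhomogeneous terms cancel exactly as you indicate, and this is consistent with the factorization $T_0=(LA^dL^{-1})(LB^dL^{-1})+\lambda$ of Section \ref{sec:T0-diagonalizable}. What your route buys is strictly more information: an explicit description of $\Ker(B_m)$ as the $N$-dimensional span of $e^x$ times concrete polynomials (the paper only gets rationality of the $r_i$), and an explicit non-integrability estimate for the $L^2$ statement, which the paper leaves implicit in the word ``Hence.'' The price is the dependence on the diagonalization remarks, which the paper's recursive argument avoids. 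One small inaccuracy: $\det(F_m)$ is \emph{not} bounded away from $\infty$ on $[1,\infty)$ (it is a polynomial of degree $mN$); but this does not matter, since $G$ is a positive rational-type function of polynomial growth, so $G(x)\geq c\,x^p$ for large $x$ with some real $p$, and $e^xG(x)$ is still non-integrable, which is all you use.
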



\begin{proof}
Let $f\in \Ker(B_m),$ then $g(x)=f(x)\Upsilon(x)^{-1}e^{-x} $ satisfies
\begin{equation}
    \label{eq:B-exp}
g\cdot \left(\frac{d}{dx}x+(M_1^{(\alpha,\beta)}+1)x+M_2^{(\alpha,\beta)}+\phi_m(x)^{-1}\phi'_m(x)x \right)=0.    
\end{equation}
 The proof follows by proving that $g_i$ is a rational function for all $i.$ For, the $(1,N)$ entry of \eqref{eq:B-exp} is
$$(g_N(x)xF_m(x)_{N,N})'=0 \quad\text {then}\quad g_N(x)=\frac{1}{xF_m(x)_{N,N}}.$$
More generally and proceding recursively, the $(1,j)$ of \eqref{eq:B-exp} is
$$(g_j(x)xF_m(x)_{j,j})'=R_j(x),$$
where $R_j$ is a rational function involving $g_{N}, \ldots,g_{j+1},$ and $F_m(x)_{N,N}, \ldots,F_m(x)_{{j+1},{j+1}}$ and then $g_j$ is a rational function of $x.$
\end{proof}

\subsection{Matrix valued exceptional Laguerre polynomials}
In this subsection we introduce a sequence of matrix exceptional Laguerre polynomials. We recall that we have the sequence of monic matrix orthogonal polynomials $(P^{(\alpha,\nu)}_n)_n$ with respect to $W_\mu^{(\alpha,\nu)},$ see \eqref{eq:MVorthogonalpolsW}. We fix $m\in\mathbb{N}_0$ and following Theorem \ref{thm:ortogonalidad-excep} we denote 
\begin{equation}
\label{eq:X-laguerre}
\widehat{P}^{(\alpha,\nu,m)}_n=P_n^{(\alpha,\nu)}\cdot A_m.
\end{equation}
These are the matrix valued exceptional Laguerre polynomials associated to $(T_0, \phi_m, \Upsilon_m)$.

\begin{thm}
Assume $\nu>\max(0,m-1),$ then $\widehat{P}_n^{(\alpha,\nu,m)}$ is a matrix polynomial of degree $mN+n$ with invertible leading coefficient whose rows form an orthogonal basis for $L^2(\widehat{W}_\mu^{(\alpha,\nu,m)})$. Moreover, $\widehat{P}^{(\alpha,\nu,m)}_n$ is an eigenfunction of $T_1$ of eigenvalue $\Gamma_n^{(\alpha,\nu)}$.
\end{thm}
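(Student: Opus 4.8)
The plan is to obtain the statement by specializing the general results of Part~\ref{part:general}, once the Laguerre data of Sections~\ref{sec:MLaguerreweightT0}--\ref{sec:intertwingLaguerre} have been checked against the standing hypotheses (1)--(6) at the head of Section~\ref{sec:matriXOL}. I would begin with the degree and leading coefficient. Since $P_n^{(\al,\nu)}=x^n+(\text{lower order})$ and, by Lemma~\ref{lem:degreephiinvderphiUps} together with $\deg\Upsilon_m=mN+1$, the operator $A_m$ sends each monomial $x^k$ to a matrix polynomial of degree exactly $k+mN$, the top-degree part of $\widehat{P}^{(\alpha,\nu,m)}_n=P_n^{(\al,\nu)}\cdot A_m$ coincides with that of $x^n\cdot A_m$; by Lemma~\ref{lem:Am-preserves-regularity}, whose proof computes the diagonal entries of $x^n\cdot A_m$ and shows they have precise degree $mN+n$, this leading coefficient is invertible. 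Hence $\widehat{P}^{(\alpha,\nu,m)}_n$ has degree $mN+n$ with invertible leading coefficient.

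Next I would invoke Theorem~\ref{thm:ortogonalidad-excep}. All of its hypotheses are available here: the seed function is $\phi_m$ of Proposition~\ref{prop:phim-seedfunction} with scalar eigenvalue $\la=\al-m$, the intertwiner $A_m$ is covered by Lemmas~\ref{lem:degreephiinvderphiUps} and \ref{lem:Am-preserves-regularity}, positive definiteness and finite moments of $W_\mu^{(\al,\nu)}$ and $\widehat{W}_\mu^{(\al,\nu,m)}$ and density of $\Pol$ are Proposition~\ref{prop:polsdenseinWandhatW}, essential self-adjointness of $T_0$ is Proposition~\ref{prop:T0essentiallyselfadjoint}, the mapping property of $B_m$ is Lemma~\ref{lem:AmBmdenselydefinedoperators}, the vanishing and boundary conditions are verified in the paragraph following that lemma, and $P_n^{(\al,\nu)}\cdot T_0=\Ga_n^{(\al,\nu)}\cdot P_n^{(\al,\nu)}$ is \eqref{eq:eigenvaluematrixLaguerrepols}. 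Thus Theorem~\ref{thm:ortogonalidad-excep} gives $\widehat{P}^{(\alpha,\nu,m)}_n\cdot T_1=\Ga_n^{(\al,\nu)}\cdot\widehat{P}^{(\alpha,\nu,m)}_n$, that the $\widehat{P}^{(\alpha,\nu,m)}_n$ are eigenfunctions of $S_1$, and the orthogonality relation with matrix squared norm $\widehat{H}_n=H_n^{(\al,\nu)}(\al-m-\Ga_n^{(\al,\nu)})^\ast$. Since $\si(\Ga_n^{(\al,\nu)})=\{-n+\al-\nu-j:1\le j\le N\}$ and $n+\nu+j>m$ for all $n\ge0$, $1\le j\le N$ when $\nu>\max(0,m-1)$, the value $\al-m$ lies outside $\si(\Ga_n^{(\al,\nu)})$; hence $\det\widehat{H}_n\neq0$, and $\widehat{H}_n$, being positive semidefinite by positivity of the inner product, is positive definite.

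For the completeness of the system I would use Proposition~\ref{prop:densityofexceptpols}: the rows of $(\widehat{P}^{(\alpha,\nu,m)}_n)_n$ are dense in $L^2(\widehat{W}_\mu^{(\al,\nu,m)})$ if and only if $\al-m\notin\si_p(S_1)$, which by its proof is equivalent to $\Ker(\overline{A_m}^\ast)=\{0\}$. I would argue this kernel is trivial as follows: by Proposition~\ref{prop:-BadjA}, $-B_m$ is the formal adjoint of $A_m$ with respect to the two weights, and $\overline{A_m}^\ast$ acts by this formal adjoint expression on its domain, so any $f\in\Ker(\overline{A_m}^\ast)$ solves $f\cdot B_m=0$ distributionally on $(0,\infty)$; since under $\nu>\max(0,m-1)$ the leading coefficient of $B_m$ equals $(\det F_m)^{-1}$, which is smooth and non-vanishing on $(0,\infty)$, such an $f$ agrees a.e.\ with a classical solution. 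By Proposition~\ref{prop:ker-B} every classical solution has entries $e^x r_i(x)$ with $r_i$ rational, and its $e^x$-growth at $+\infty$ is incompatible with the $e^{-x}$-decay of $\widehat{W}_\mu^{(\al,\nu,m)}$, so $f=0$. Hence the rows span a dense subspace; combined with the block orthogonality of the previous step (and, for genuine pairwise orthogonality inside a single $\widehat{P}^{(\alpha,\nu,m)}_n$, conjugation by the invertible lower triangular matrix $R_n$ diagonalizing $\Ga_n^{(\al,\nu)}$, as in the discussion preceding Proposition~\ref{prop:T0essentiallyselfadjoint}) this yields the asserted orthogonal basis.

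The main obstacle is this last step: the degree count and the orthogonality/eigenvalue assertions are essentially bookkeeping against the earlier results, whereas excluding $\al-m$ from $\si_p(S_1)$ forces one to pass from the operator-theoretic kernel $\Ker(\overline{A_m}^\ast)$ to the explicit ODE analysis of Proposition~\ref{prop:ker-B} --- i.e.\ to the regularity statement that an element of that kernel is a classical solution of $f\cdot B_m=0$ and therefore cannot be square-integrable against $\widehat{W}_\mu^{(\al,\nu,m)}$.
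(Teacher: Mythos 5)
Your proposal is correct and follows essentially the same route as the paper: degree and leading-coefficient via Lemma~\ref{lem:Am-preserves-regularity}, eigenvalue and orthogonality via Theorem~\ref{thm:ortogonalidad-excep}, and density via Proposition~\ref{prop:densityofexceptpols} combined with Proposition~\ref{prop:ker-B}. You in fact supply slightly more detail than the paper does, notably the explicit check that $\al-m\notin\si(\Ga_n^{(\al,\nu)})$ under $\nu>\max(0,m-1)$ and the regularity step connecting $\Ker(\overline{A_m}^\ast)$ to classical solutions of $f\cdot B_m=0$, which the paper leaves implicit.
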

\begin{proof}
The polynomial $\widehat{P}^{(\alpha,\nu,m)}_n$ is a matrix valued polynomial of degree $mN+n.$ Moreover, its leading coefficient is lower triangular and invertible, see Lemma \ref{lem:Am-preserves-regularity} and its proof. By Theorem \ref{thm:ortogonalidad-excep}, we get that $\widehat{P}^{(\alpha,\nu,m)}_n$ is an eigenfunction of $T_1$ of eigenvalue $\Gamma_n^{(\alpha,\nu)}$ and that the exceptional polynomials are orthogonal with respect to $\widehat{W}_\mu^{(\alpha,\nu,m)}$. Finally, Proposition \ref{prop:ker-B} implies $\alpha-m \notin \sigma_p(S_1)$ and by Proposition \ref{prop:densityofexceptpols} the rows of $\widehat{P}_n^{(\alpha,\nu,m)}$ are dense in $L^2(\widehat{W}_\mu^{(\alpha,\nu,m)})$.
\end{proof}

\begin{rmk}
In the scalar case, i.e. $N=1$, exceptional Laguerre polynomials have been studied by several authors, see for instance \cite{UllaM}, \cite{ArnoN}, \cite{Liaw}, \cite{MarcellanU}, \cite{Sasaki}, \cite{Duran1}, \cite{UKM1}, \cite{Sasa1}, \cite{Sasa2}, \cite{Sasa4}. We end this section by linking our notation of exceptional Laguerre polynomials $\widehat{P}_n^{(\alpha,\nu,m)}$ with some notations in the literature.

In \cite{Liaw}, \cite{UllaM}, \cite{MarcellanU}, the exceptional Laguerre polynomials are classified as type I, type II, and type III $X_m$-Laguerre polynomials. The exceptional Laguerre polynomials $\widehat{P}_n^{(\alpha,\nu,m)}$ correspond to the type II of $X_m$-Laguerre polynomials \cite[Section 4.2]{UllaM}: 
$$\widehat{P}_{n-m}^{(\alpha,\nu+1,m)}=-L^{II,\nu}_{n,m}=A^{(\nu+1)}_m\cdot L_{n-m}^{(\nu+1)},$$
where $L_{n-m}^{(\nu+1)}$ denotes the scalar Laguerre polynomial of degree $n-m$ and parameter $\nu+1.$ Note that the action of the intertwining operator $A^{(\nu+1)}_m$ is from the left as standard notation in the scalar case \cite[Eq. (88)]{UllaM}. Moreover in \cite{UllaM} the authors evaluate the interwiner operator $A^{(\nu+1)}_m$  in $L_{n-m}^{\nu+1}$ so that $L^{II,\nu}_{n,m}$ becomes a polynomial of degree $n.$ 

In terms of partitions, in \cite[Prop. 4, Eq. (69)]{ArnoN}, the authors show that type II of $X_m$-Laguerre polynomials corresponds to
$$L^{II,\nu}_{n,m}=c_{n,m} L_{\emptyset, (1,\ldots,1),n}^{(\nu-m)},$$
where $L_{\emptyset, (1,\ldots,1),n}^{(\nu-m)}$ denotes the exceptional polynomial associated to the partitions $\emptyset, (1,\ldots,1)$, see \cite[Def. 4]{ArnoN}, and $c_{n,m}$ is a constant depending on $n,m, \nu.$

\end{rmk}

\section{Zeros of matrix exceptional Laguerre polynomials}\label{sec:numerics}
In this section we present some numerical information on the zeros of the 
matrix exceptional Laguerre polynomials discussed in Part \ref{part:matriXOL}. 
The zeros are the zeros of the determinant of the matrix exceptional 
polynomials, as for the matrix valued orthogonal polynomials \cite{DuranMarkov}, \cite{DuranL}. 

Our numerical experiments lead to the following conjectures for the zeros of 
$\det(\widehat{P}_n^{(\alpha,\nu,m)})$ under the 
condition $\nu >\max(0,m-1)$:
\begin{enumerate}
    \item $\det(\widehat{P}_n^{(\alpha,\nu,m)})$ has $nN$ real simple zeros;
    \item $\det(\widehat{P}_n^{(\alpha,\nu,m)})$ has $m$ clusters of $N^2$ complex zeros outside $[0,\infty)$, and the multiplicity can be more than one;
    \item higher multiple complex zeros of $\det(\widehat{P}_n^{(\alpha,\nu,m)})$ coincide with the zeros of $\Up_m$ and there are $mN$ multiple complex zeros of multiplicity $N-1$.  
\end{enumerate}
\begin{figure}[h]
\subfloat[$N=2$, $m=30$, $n=7$, $\al=30$, $\nu=31.$]{\includegraphics[width = 0.3\textwidth]{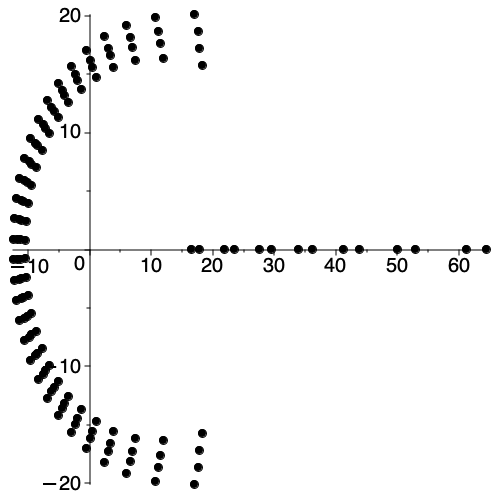}\quad} 
\subfloat[$N=3$, $m=13$, $n=5$, $\al=14$, $\nu=14$. The solid circles are double zeros.]{\includegraphics[width = 0.3\textwidth]{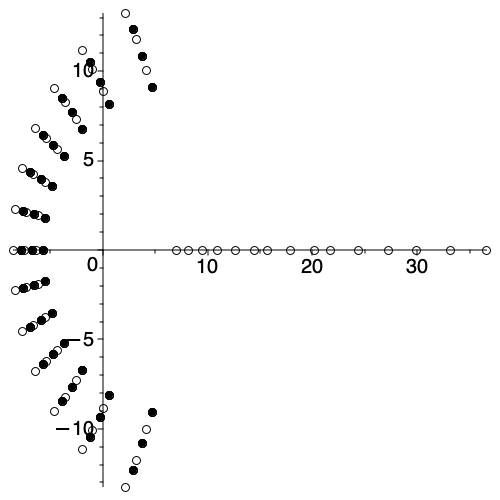}\quad} 
\subfloat[$N=2$, $m=30$, $n=7$, $\al=30$, $\nu=27.5$.]{\includegraphics[width = 0.3\textwidth]{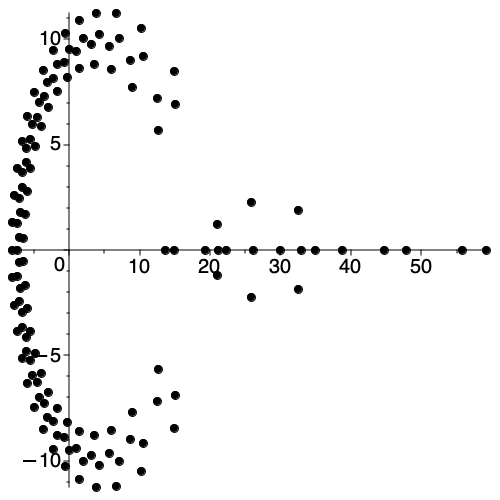}}\\
\caption{Numerical support for the conjectures in Section \ref{sec:numerics} }
\label{fig:plotzeroes}
\end{figure}
We support this with plots in Figure \ref{fig:plotzeroes}. We observe that, for $N=1$, the first conjecture was discussed in \cite[\S 6.1]{ArnoN} and also in \cite[\S 4.3]{MarcellanU}.
The zeros of $\det(\widehat{P}_n^{(\alpha,\nu,m)})$ can also be generated for several degrees relaxing the condition 
$\nu>\max(0,m-1)$. If we drop this condition, the behaviour of the zeros gets more chaotic and the conjectures do not seem valid.

\begin{appendices}

\section{Matrix differential equations with singularities}\label{sec:appA:MDEsingularities}

We discuss some results on the notion of singularities and regular singularities of matrix differential equations following Coddington and Levinson \cite{CoddL}, especially Chapter 4 and Exercises 2 and 13. The approach of Coddington and Levinson \cite{CoddL} is very general, and for the Frobenius method for second order differential equations and the indicial equation,  see Coddington and Levinson \cite[Ch.~4, \S 7-8]{CoddL} and e.g. Temme \cite[\S 4.2]{Temm}. For the special case of a matrix differential operator of hypergeometric type, see Tirao \cite{Tira}. In the appendix we follow the convention of operators acting from the left as in \cite{CoddL}. Let us consider, see \cite[Ch.~4, Exercise 2]{CoddL} for $n=2$,
\begin{equation}\label{eq:A-2ndorderMVde}
z^2 \, w''(z) + z B_1(z) \, w'(z) + B_2(z)\, w(z) = 0,
\end{equation}
where $B_1$ and $B_2$ are $N\times N$-matrix valued functions which are analytic in 
an open disk of radius $r>0$, meaning that each entry is analytic. We look for vector valued solutions $w$ taking values in $\C^N$ of \eqref{eq:A-2ndorderMVde}. 
Following \cite[Ch.~4, \S~5]{CoddL} we put $\vp_1(z)=w(z)$, $\vp_2(z)=z w'(z)$ and 
the $\C^{2N}$ valued function $\vp = \begin{pmatrix} \vp_1\\ \vp_2 \end{pmatrix}$ satisfies
\begin{equation}\label{eq:A-1storderreformulation}
\vp'(z) = A(z) \vp(z), \qquad A(z) = \frac{1}{z} 
\begin{pmatrix} 0 & \Id \\ -B_2(z) & \Id - B_1(z) \end{pmatrix}
\end{equation}
where $A$ is $2N\times 2N$-matrix, which is a $2\times 2$-block matrix of $N\times N$-matrices, where $0$, respectively $\Id$, denotes the zero $N\times N$-matrix, respectively the identity matrix.  In particular, $z=0$ is a singularity of the first kind and a regular singularity, see 
\cite[Ch.~4, \S~2]{CoddL}, and this is in line with \cite{Tira} for the case of the matrix hypergeometric differential operator. 
Putting $R= zA(z)\vert_{z=0} = \begin{pmatrix} 0 & \Id \\ -B_2(0) & \Id - B_1(0) \end{pmatrix}$ the corresponding indicial equation is $\det(\mu-R)=0$, see \cite[p.~127]{CoddL}, and using \cite[Thm.~3, (14)]{Silv} the 
indicial equation for \eqref{eq:A-1storderreformulation} becomes 
\begin{equation}\label{eq:A-indicialeq}
\det\bigl(\mu (\mu-1)+\mu B_1(0) + B_2(0)\bigr)= 0.
\end{equation}
The solutions of the indicial equation are the exponents of \eqref{eq:A-1storderreformulation} and \eqref{eq:A-2ndorderMVde}. 
Since $B_1$ and $B_2$ are analytic, the matrix $A(z)=\sum_{m=-1}^\infty z^mA_m$ is a convergent Laurent series around $z=0$, hence \cite[Thm.~3.1]{CoddL} shows that any formal solution to \eqref{eq:A-1storderreformulation} is convergent in a region $0<|z|<a$ for some $a>0$. Moreover,
\eqref{eq:A-1storderreformulation} has a fundamental matrix $\Phi$ of solutions of the form 
$\Phi(z) = z^{\hat{R}} P(z)$ for some region $0<|z|<a$ for some $a>0$. Here $P(z)$ is a convergent power series and $\hat{R}$ is related to $R$, see \cite[Ch.~4, Thm.~4.2]{CoddL}. 
In case $R$ is semisimple, there is a basis of solutions $\vp_\la(z) = z^\la f(z)$, 
$\la$ eigenvalue of $R$, and $f$ a vector valued analytic function in a disk $|z|<r$ for some $r>0$, see Exercise 13 of \cite[Ch.~4]{CoddL} where also the case of more general $R$ is considered.  
In particular, we have $2N$ solutions to \eqref{eq:A-2ndorderMVde} of the form 
$z^\mu f(z)$ for $\mu$ solving \eqref{eq:A-indicialeq} assuming $R$ is semisimple. Assuming 
moreover that $B_2(0)=0$ as matrices, \eqref{eq:A-indicialeq} reduces to 
$\mu^N \det(\mu-1+B_1(0))=0$ and $B_1(0)$ being semisimple. In this case we have 
$N$ linearly independent analytic solutions in a disk centered at $0$, and we have $N$ solutions of the form $z^{1-\mu_i} f(z)$, $f$ analytic in a disk centered at $0$, for $\{\mu_i\}_{i=1}^N$
the set of eigenvalues of $B_1(0)$.

Note that the same analysis can be done for any other point $z_0\in \C$ by the change of coordinates $z-z_0$ as well as for $z_0=\infty$ by replacing $z=1/\zeta$, see \cite{CoddL}. 

\end{appendices}




\begin{thebibliography}{99}

\bibitem{Akhi}
Akhiezer NI.
The classical moment problem and some related questions in analysis.
\emph{Hafner Publishing, 1965.}

\bibitem{AAGMM}
\'Alvarez-Fern\'andez C, Ariznabarreta G, Garc\'ia-Ardila JC, Ma{\~{n}}as M, and Marcell\'an F.
Christoffel transformations for matrix orthogonal polynomials in the real line and the non-{A}belian 2{D} {T}oda lattice hierarchy.
\emph{Int Math Res Not IMRN.} 2017; 2017(5):1285--1341.

\bibitem{AndrAR}  
Andrews GE, Askey RA, and Roy R.
Special Functions.
In \emph{Encycl Mathematics Appl.} Vol 71. Cambridge: Cambridge Univ. Press; 1999.

\bibitem{Berg}
Berg C.
The matrix moment problem.
In \emph{Coimbra Lecture Notes on Orthogonal Polynomials.} 1--57. 
(eds. Branquinho AJPL, Foulqui\'e Moreno AP), Nova Science; 2008. 

\bibitem{ArnoN}
Bonneux N, and Kuijlaars  ABJ.
Exceptional Laguerre polynomials.
\emph{Stud Appl Math.} 2018; 141, No. 4, 547--595.


\bibitem{CanteroMV2005}
Cantero MJ, Moral L, and Vel{\'a}zquez L.
Differential properties of matrix orthogonal polynomials.
\emph{J Concr Appl Math.} 2005; 3(3):313--334.

\bibitem{CanteroMV2007}
Cantero MJ, Moral L, and Vel{\'a}zquez L.
Matrix orthogonal polynomials whose derivatives are also orthogonal.
\emph{J Approx Theory.} 2007; 146(2):174--211.

\bibitem{CaspY}
Casper WR and Yakimov M.
The matrix Bochner problem. 
\emph{Amer J Math.} 2022; 144(4):1009--1065.

\bibitem{CoddL}
Coddington EA and Levinson N.
Theory of ordinary differential equations.
\emph{McGraw-Hill, 1955}.


\bibitem{DamanikPS}
Damanik D, Pushnitski A, and Simon B.
The analytic theory of matrix orthogonal polynomials.
\emph{Surv Approx Theory.} 2008; 4:1--85.

\bibitem{DeanER}
Dea\~no A, Eijsvoogel B, Rom\'an P. 
Ladder relations for a class of matrix valued orthogonal polynomials.
\emph{Stud Appl Math.} 2021; 146, 463--497. 

\bibitem{Deif}
Deift PA, 
Applications of a commutation formula,
\emph{Duke Math J.} 1978; 45, 267--310. 

\bibitem{DuranMarkov}
Dur\'an AJ. 
Markov’s theorem for orthogonal matrix polynomials. 
\emph{Canad J Math.} 1996; 48, No. 6, 1180--1195.

\bibitem{Duran}
Dur{\'a}n AJ.
Matrix inner product having a matrix symmetric second order differential operator.
\emph{Rocky Mountain J Math.} 1997; 27(2):585--600.

\bibitem{Duran1}
Dur\'an AJ.
Exceptional Meixner and Laguerre orthogonal polynomials, 
\emph{J Approx Theory.} 2004; 184, 176–208.

\bibitem{DuranManuel}
Durán AJ and de la Iglesia  M.
Some examples of orthogonal matrix polynomials satisfying odd order differential equations.
\emph{J Approx Theory.} 2008; 150(2), 153–174.

\bibitem{DuranG1}
Dur{\'a}n AJ and Gr{\"u}nbaum FA.
Orthogonal matrix polynomials satisfying second-order differential equations.
\emph{Int Math Res Not IMRN.}2004; 2004(10):461--484.

\bibitem{DuraG}
Dur\'an AJ and ~Gr\"unbaum FA.
Orthogonal matrix polynomials satisfying second-order differential equations.
\emph{Int Math Res Not IMRN.} 2004; 461--484.  

\bibitem{DuranL}
Dur\'an AJ and Lopez-Rodriguez P.
Orthogonal matrix polynomials: zeros and Blumenthal’s theorem. 
\emph{J Approx Theory.} 1996; 84, No. 1, 96--118.


\bibitem{Duran2} 
Dur\'an AJ and P\'erez M.
Admissibility condition for exceptional Laguerre polynomials. 
\emph{J Math Anal Appl.} 2015; 424, 1042–1053.

\bibitem{EMR}
Eijsvoogel B, Morey L and Román P.
Duality and difference operators for matrix valued discrete polynomials on the nonnegative integers. 
To appear in \emph{Constr Approx.}

\bibitem{Erdelyi}
Erdelyi A, Magnus W, Oberhettinger F, and Tricomi FG.
Higher Transcendental Functions vol I.
\emph{Bateman Mnauscript Project.} New York: McGraw-Hill Book Co. XXVI, 302 S., 1953.

\bibitem{EtinGR}
Etingof P, Gelfand I, and Retakh V.
Factorization of differential operators, quasideterminants, and nonabelian Toda field equations.
\emph{Math Res Lett.}, 199.

\bibitem{Freu}  
Freud G.
Orthogonal polynomials.
\emph{Pergamon Press}, 1971.

\bibitem{X-Bochner}
García-Ferrero MA, Gómez-Ullate D, and Milson R.
A Bochner type characterization theorem for exceptional orthogonal polynomials.
\emph{J Math Anal Appl.} 2019; 472, No. 1, 584--626.

\bibitem{UKM1} 
Gómez-Ullate D, Kamran N, and Milson R.
An extended class of orthogonal polynomials defined by a Sturm–Liouville problem.
\emph{J Math Anal Appl.} 2015; 359 no. 1, 352–367.

\bibitem{UKM2}
Gómez-Ullate D, Kamran N, and Milson R.
An extension of Bochner’s problem: exceptional invariant subspaces.
\emph{J Approx Theory.} 2019; 162 , no. 5, 987–1006.

\bibitem{UllaM}
Gómez-Ullate D, Kamran N, and Milson R.
Exceptional orthogonal polynomials and the Darboux transformation.
\emph{J Phys A, Math Theor.} 2010; 43, No. 43, Article ID 434016, 16 p.

\bibitem{TwoStep}
Gómez-Ullate D,  Kamran N, and Milson R.
Two-step Darboux transformations and exceptional Laguerre polynomials.
\emph{J Math Anal Appl.} 2012; 387, No. 1, 410--418.

\bibitem{MarcellanU}
Gómez-Ullate D, Marcellan F, and Milson R.
Asymptotic and interlacing properties of zeros of exceptional Jacobi and Laguerre polynomials.
\emph{J Math Anal Appl.} 2013; 399, 480–495.


\bibitem{GoncV}
Goncharenko VM, and Veselov AP. 
Monodromy of the matrix Schr\"odinger equations and Darboux transformations.
\emph{J Phys A, Math Gen} 1998; 31, 5315--5326.

\bibitem{Grandati1} 
Grandati Y, and Quesne C.
Disconjugacy, regularity of multi-indexed rationally extended potentials, and Laguerre exceptional polynomials.
\emph{J Math Phys.} 2013; 54, 073512.

\bibitem{GPT}
Gr{\"u}nbaum FA, Pacharoni I, and Tirao J.
Matrix valued spherical functions associated to the complex projective plane.
\emph{J Funct Anal.} 2002; 188(2):350--441.

\bibitem{GT}
Alberto Gr{\"u}nbaum  FA, and Tirao J.
The algebra of differential operators associated to a weight matrix.
\emph{Integral Equations Operator Theory.} 2007; 58(4):449--475.

\bibitem{HeckmanP}
Heckman G, van Pruijssen M.
Matrix valued orthogonal polynomials for {G}elfand pairs of rank one.
\emph{Tohoku Math J.} 2016; (2), 68(3):407--437.

\bibitem{Isma} 
Ismail MEH.
Classical and quantum orthogonal polynomials in one variable.
\emph{Encycl Mathematics Appl.} 2009; \textbf{98},  Cambridge Univ. Press. 

\bibitem{IKR2}
Ismail MEH, Koelink E, and Rom\'an P.
Matrix valued Hermite polynomials, Burchnall formulas and non-Abelian Toda lattice.
\emph {Adv Appl Math.} 2019; 110:235--269.

\bibitem{KoekLS}
Koekoek R, Lesky PA, and Swarttouw RF.
Hypergeometric orthogonal polynomials and their $q$-analogues.
\emph{Springer}, 2010. 

\bibitem{KoekS}
Koekoek R and Swarttouw RF. 
The Askey-scheme of hypergeometric orthogonal polynomials and its $q$-analogue.
Online at \texttt{http://aw.twi.tudelft.nl/\~{}koekoek/askey.html}, Report
98-17, Technical University Delft, 1998. 

\bibitem{KdlRR}
Koelink E, de los R\'{\i}os AM, and Rom{\'a}n P.
Matrix-valued {G}egenbauer-type polynomials.
\emph{Constr Approx.} 2017; 46(3):459--487.

\bibitem{KoelR}
Koelink E, and Rom\'an P.
Matrix valued Laguerre polynomials. 
In \emph{Positivity and Noncommutative Analysis. Festschrift in Honour of Ben de Pagter} 295--320 (eds. Buskes G, de Jeu M, Dodds P, Schep A, Sukochev F, van Neerven J and Wickstead A), Birkh\"auser, 2019. 

\bibitem{Koornwider85}
Koornwinder TH.
Special orthogonal polynomial systems mapped onto each other by the Fourier-Jacobi transform.
Polynômes orthogonaux et applications, Proc. Laguerre Symp., Bar-le- Duc/France 1984, Lect. Notes Math. 1171, 174-183, 1985. 

\bibitem{Krein1}
Kre\u{\i}n MG.
Hermitian positive kernels on homogeneous spaces. {I}.
\emph{Ukrain. Mat. \v{Z}urnal, Akademiya Nauk Ukrainsko\u{\i} SSR. Institut Matematiki. Ukrainski\u{\i} Matematicheski\u{\i} Zhurnal.}; 1949.

\bibitem{Liaw}
Liaw C, Littlejohn L, Milson R, and Stewart J.
The spectral analysis of three families of exceptional Laguerre polynomials.
\emph{J Approx Theory.} 2016; 202, 5-41.

\bibitem{PrudBM}
Prudnikov AP, ~Brychkov Yu A., ~Marichev OI.
Integrals and series. Vol. 3. More special functions.
\emph{Gordon and Breach Science Publ.} 1990.

\bibitem{Rudi}
Rudin W.
Functional analysis.
\emph{McGraw-Hill}, 1973. 

\bibitem{Sasaki}
Ho CL, Sasaki R.
Zeros of the exceptional Laguerre and Jacobi polynomials.
\emph{ISRN Math Phys.} 2012; 2012:27.

\bibitem{Sasa1}
Ho CL, Odake S, and Sasaki R.
Properties of the exceptional (Xl) Laguerre and Jacobi polynomials.
\emph{Symmetry, Integrability and Geometry: Methods and Applications} 2011; 7, 107.

\bibitem{Sasa3} 
Odake S and Sasaki R.
Infinitely many shape invariant potentials and new orthogonal polynomials.
\emph{Physics Letters B} 2009; 679, 414–417.
 
\bibitem{Sasa2} 
Odake S, and Sasaki R.
Another set of infinitely many exceptional (Xl) Laguerre polynomials. 
\emph{Physics Letters B} 2010; 684, 173–176.

\bibitem{Sasa4}
Sasaki R, Tsujimoto A, and Zhedanov A.
Exceptional Laguerre and Jacobi polynomials and the corresponding potentials through Darboux-Crum transformations.
\emph{J Phys A, Math Theor.} 2010; 43, 315204.

\bibitem{Silv}
Silvester JR.
Determinants of block matrices.
\emph{Math Gaz.} 2000; 84 460--467.

\bibitem{Temm}
Temme NM.
Special functions. An introduction to the classical functions of mathematical physics.
\emph{Wiley}, 1996.

\bibitem{Tira}
Tirao JA.
The matrix-valued hypergeometric equation.
\emph{Proc Natl Acad Sci USA.} 2003; 100, 8138--8141.

\end{thebibliography}
\end{document}